\DeclareFontFamily{U}{mathx}{\hyphenchar\font45}
\DeclareFontShape{U}{mathx}{m}{n}{
      <5> <6> <7> <8> <9> <10>
      <10.95> <12> <14.4> <17.28> <20.74> <24.88>
      mathx10
      }{}
\DeclareSymbolFont{mathx}{U}{mathx}{m}{n}
\DeclareMathAccent{\widecheck}{0}{mathx}{"71}
\DeclareMathAccent{\wideparen}{0}{mathx}{"75}
\newcommand{\fH}{\mathcal{H}}
\newcommand{\fD}{\mathcal{D}}
\newcommand{\fG}{\mathcal{G}}
\newcommand{\C}{\mathbb{C}}
\newcommand{\R}{\mathbb{R}}
\newcommand{\Z}{\mathbb{Z}}
\newcommand{\supp}{\operatorname{supp}}
\newcommand{\fC}{\mathcal{C}}
\newcommand{\fQ}{\mathcal{Q}}
\newcommand{\dist}{\operatorname{dist}}
\newcommand{\diam}{\operatorname{diam}}
\newcommand{\im}{\operatorname{im}}
\newcommand{\ch}{\operatorname{ch}}
\newcommand{\graph}{\operatorname{graph}}
\def\XXint#1#2#3{{\setbox0=\hbox{$#1{#2#3}{\int}$}
     \vcenter{\hbox{$#2#3$}}\kern-.5\wd0}}
\newtheorem{theorem}{Theorem}[section]
\newtheorem{proposition}[theorem]{Proposition}
\newtheorem{lemma}[theorem]{Lemma}
\newtheorem{corollary}[theorem]{Corollary}
\theoremstyle{definition}
\newtheorem{remark}[theorem]{Remark}
\numberwithin{equation}{section}
\begin{document}
{\allowdisplaybreaks 

\renewcommand{\labelenumii}{\arabic{enumi}.\arabic{enumii}}

\title{Two-point patterns determined by curves}
\author{Benjamin B.~Bruce and Malabika Pramanik}
\date{\today}
\subjclass[2010]{28A80 (primary), 11B25, 11B30, 42B99, 28A78 (secondary)}
\keywords{Fractals, polynomial configurations, curvature, functions of finite type, Hausdorff dimension, Fourier transforms of measures} 
\thanks{Both authors were supported in part by NSERC Discovery grant GR010263}

\newcommand{\Addresses}{{
  \bigskip
  \footnotesize

{\textsc{Department of Mathematics, 1984 Mathematics Road, University of British Columbia, Vancouver, BC V6T 1Z2, Canada}} \par\nopagebreak
  \textit{E-mail addresses}: \texttt{bbruce@math.ubc.ca, malabika@math.ubc.ca}

}}

\maketitle

\begin{abstract}
Let $\Gamma \subset \R^d$ be a smooth curve containing the origin. Does every Borel subset of $\mathbb R^d$ of sufficiently small codimension enjoy a S\'ark$\ddot{\text{o}}$zy-like property with respect to $\Gamma$, namely, contain two elements differing by a member of $\Gamma \setminus \{0\}$? Kuca, Orponen, and Sahlsten \cite{KOS} answer this question in the affirmative for a specific curve with nonvanishing curvature, the standard parabola $(t, t^2)$ in $\R^2$. In this article, we use the analytic notion of ``functional type'', a generalization of curvature ubiquitous in harmonic analysis, to study containment of patterns in sets of large Hausdorff dimension. Specifically, for {\em{every}} curve $\Gamma \subset \R^d$ of finite type at the origin, we prove the existence of a dimensional threshold $\varepsilon >0$ such that every Borel subset of $\R^d$  of Hausdorff dimension larger than $d - \varepsilon$  contains a pair of points of the form $\{x, x+\gamma\}$ with $\gamma \in \Gamma \setminus \{0\}$.  The threshold $\varepsilon$ we obtain, though not optimal, is shown to be uniform over all curves of a given ``type". We also demonstrate that the finite type hypothesis on $\Gamma$ is necessary, provided $\Gamma$ either is parametrized by polynomials or is the graph of a smooth function.  Our results therefore suggest a correspondence between sets of prescribed Hausdorff dimension and the ``types" of two-point patterns that must be contained therein.
\end{abstract}

\section{Introduction} \label{Introduction}

Hausdorff dimension is a notion of size universally used in geometric measure theory. The aim of this article is to establish Hausdorff dimension as an identifier of two-point ``curved'' patterns contained in a set, with an appropriate interpretation of curvature. 

Let us start with the necessary terminology.
\begin{itemize}
\item{A {\em{smooth curve}} in $\mathbb R^d$ refers to the image of an infinitely differentiable function $\Phi\colon \mathtt I \rightarrow \mathbb R^d$, where $\mathtt I \subset \mathbb R$ is any nondegenerate compact interval.  Here and throughout, we adopt the convention that derivatives at the endpoints of $\mathtt I$ are one-sided.  We also assume, from now on, that all compact intervals $\mathtt I$ are nondegenerate, i.e.~have positive length.  For any smooth curve $\Gamma$, there are many functions $\Phi$ and intervals $\mathtt I$ such that $\Gamma = \Phi(\mathtt I)$.}
\item{A smooth curve $\Gamma \subset \R^d$ will be called \emph{unavoidable} if there exists a constant $\varepsilon > 0$ such that $(\Gamma \setminus \{0\}) \cap (K - K) \ne \emptyset$ for every Borel set $K \subseteq \R^d$ with $\dim_{\text{H}} K > d-\varepsilon$;  if there is no such $\varepsilon$, then $\Gamma$ is \emph{avoidable}.  Here and throughout, $\dim_{\text{H}}$ denotes the Hausdorff dimension; see \cite[Chapter 4, \S 4.3]{Mattila}.}
\end{itemize}

An unavoidable curve necessarily contains the origin.  Indeed, suppose $\Gamma$ is a smooth curve that does not contain the origin.  Since $\Gamma$ is compact, we have $\dist(0, \Gamma) > 0$.  It follows that $\Gamma \cap (K-K) = \emptyset$ for any set $K$ of diameter strictly less than $\dist(0,\Gamma)$, including Euclidean balls which not only have full Hausdorff dimension but positive Lebesgue measure.  Equivalently stated, if $\Gamma$ is an unavoidable curve, then sets of large Hausdorff dimension but arbitrarily small diameter would contain pairs of the form $\{x, x + \gamma\}$ with $\gamma \in \Gamma$, forcing the origin to be a limit point (and thus an element) of $\Gamma$. Unavoidability is therefore a local property of a curve, reflecting its behaviour near the origin.

A classical result of Furstenberg \cite{Furstenberg} and S\'ark$\ddot{\text{o}}$zy \cite{{Sarkozy1}, {Sarkozy2}} shows that the difference set $A - A$ of any set $A \subseteq \mathbb Z$ of positive upper density cannot be square-free. The concept of unavoidability extends this notion to Euclidean spaces of dimension $d \geq 2$:  If $\Gamma \subset \R^d$ is unavoidable, then the difference set $K-K$ of any ``large" set $K \subseteq \R^d$ must contain a nontrivial element of $\Gamma$.  Recent work  of Kuca, Orponen, and Sahlsten \cite{KOS} has shown that the standard parabola $\{(t,t^2) \colon t \in [-1, 1]\}$ in $\R^2$ is unavoidable, leading to a natural question as to which other curves enjoy this property.  This article addresses this question by presenting results  of two types. We show that a smooth curve is unavoidable if it is of ``finite type'' at the origin.  We also consider two distinct subclasses of smooth curves, namely smooth graphs and polynomial curves, and show that within these classes the finite type hypothesis is necessary (as well as sufficient) for unavoidability.

To state our results, let us recall (e.g.~from \cite[Chapter VIII, \S 3.2]{Stein-HA}) the classical notion of ``type" for a smooth function $\Phi \colon \mathtt I \rightarrow \R^d$ at a point $\mathtt{t} \in \mathtt I$.  
\begin{enumerate}[1.]
\item{We say that $\Phi$ is of \emph{finite type} at $\mathtt{t}$ if for every nonzero vector $u \in \R^d$ there exists an integer $n \geq 1$ such that $u \cdot \Phi^{(n)}(\mathtt{t}) \neq 0$. If there is a nonzero vector $u \in \mathbb R^d$ for which no such $n$ exists, then $\Phi$ is of {\em{infinite type}} at $\mathtt{t}$. \label{Phi-finite-type-def}} 
\item{We say that $\Phi$ \emph{is vanishing of finite type} at $\mathtt{t}$ if $\Phi$ is of finite type at $\mathtt{t}$ and $\Phi(\mathtt{t}) = 0$.}
\item{We say that $\Phi$ is of \emph{type $\mathtt N$} at $\mathtt{t}$ if $\mathtt N$ is the smallest integer with the following property:  For every $u \in \R^d \setminus \{0\}$ there exists $n \in \{1,\ldots, \mathtt N\}$ such that $u \cdot \Phi^{(n)}(\mathtt{t}) \neq 0$.

An easy compactness argument shows that $\Phi$ is of finite type at $\mathtt{t}$ (in the sense of definition \ref{Phi-finite-type-def}) if and only if $\Phi$ is of type $\mathtt N$ at $\mathtt{t}$ for some $\mathtt N$.  Since it is always possible to find a nonzero vector $u \in \mathbb R^d$ such that $u\cdot\Phi^{(\ell)}(\mathtt t) = 0$ for every $\ell \in \{1,\ldots, d-1\}$, the smallest possible value of $\mathtt N$ is $d$.   The case $\mathtt N = d = 2$ corresponds to the situation where the image of $\Phi$ has nonzero curvature at $\Phi(\mathtt t)$.  More generally, $\mathtt N = d$ is equivalent to nonvanishing ``torsion":  $\det(\Phi'(\mathtt t), \Phi''(\mathtt t), \ldots, \Phi^{(d)}(\mathtt t)) \neq 0$. \label{Phi-type-N-def}}
\item{We say that $\Phi$ is \emph{vanishing of type $\mathtt N$} at $\mathtt{t}$ if $\Phi$ is of type $\mathtt N$ at $\mathtt{t}$ and $\Phi(\mathtt{t}) = 0$.}
\end{enumerate}
As stated above, if $\mathtt{t}$ is an endpoint of $\mathtt I$, then the derivatives in definitions \ref{Phi-finite-type-def} and \ref{Phi-type-N-def} are to be understood as one-sided.

The notion of type can be transferred from functions to curves.  For simplicity, we formulate the definition at the origin but note that it easily extends to any other point on a curve.  Let $\Gamma \subset \mathbb R^d$ be a smooth curve containing the origin.
\begin{enumerate}[1.]
\setcounter{enumi}{4}
\item{We say that $\Gamma$ is of {\emph{finite type at the origin}} if there exists a compact interval $\mathtt I \subset \R$, a point $\mathtt{t} \in \mathtt I$, and a smooth function $\Phi\colon \mathtt I \rightarrow \mathbb R^d$ that is vanishing of finite type at $\mathtt t$ such that $\Gamma$ contains the image $\Phi(\mathtt I)$;  otherwise $\Gamma$ is of {\em{infinite type}} at the origin. \label{Curve-finite-type} }
\item{We say that $\Gamma$ is of {\emph{type $\mathtt N$ at the origin}} if $\mathtt N$ is the smallest integer with the following property:  There exists a compact interval $\mathtt I \subset \R$, a point $\mathtt{t} \in \mathtt I$, and a smooth function $\Phi\colon \mathtt I  \rightarrow \mathbb R^d$ that is vanishing of type $\mathtt N$ at $\mathtt{t}$ such that $\Gamma$ contains the image $\Phi(\mathtt I)$. }
\label{Curve-type-N}
\end{enumerate}
In definitions \ref{Curve-finite-type} and \ref{Curve-type-N}, we could require that $\mathtt{t} = 0$, since $\Phi$ vanishing of type $\mathtt N$ at $\mathtt{t}$ is equivalent to $\Phi(\cdot - \mathtt{t})$ vanishing of type $\mathtt N$ at $0$.

It is important to note the distinction between the type of a curve $\Gamma$ and the types of the many functions that represent $\Gamma$. For example:
\begin{itemize} 
\item{Suppose that $\eta \colon [0,1] \rightarrow \mathbb R $ is a smooth increasing function that is vanishing of infinite type at $t=0$; say $\eta(t) = \exp(-1/t^2)$. Then the two functions
\begin{align*}
\Phi_0(t) = (\eta(t), \eta(t)^2) \quad\quad \text{ and }  \quad\quad \Phi(t) = (t^2, t^4)
\end{align*}
both represent the standard parabola in a neighbourhood of the origin.  However, $\Phi$ is vanishing of finite type at $t=0$ while $\Phi_0$ is not.  }
 \item{If a curve passes through the origin more than once, then its type is determined by its ``nicest" behaviour there.  Let $\eta\colon [0,1] \rightarrow \mathbb R$ be a smooth function that is identically 0 near $t = 0$ and identically 1 near $t = 1$.  Suppose that $\Gamma = \im \Phi$, where $\Phi\colon [0,1] \rightarrow \mathbb R^2$ is given by 
\begin{align*}
\Phi(t) \colonequals (t(t-1), (t-1)^3 \eta(t)).
\end{align*}
Then $\Phi^{-1}(0) = \{0,1\}$, and $\Phi$ is of infinite type at $t = 0$ and of finite type ($\mathtt N = 3$) at $t = 1$.  In spite of the behaviour of $\Phi$ at $t = 0$, the curve $\Gamma$ is of finite type at the origin according to definition \ref{Curve-finite-type}.}
\end{itemize}

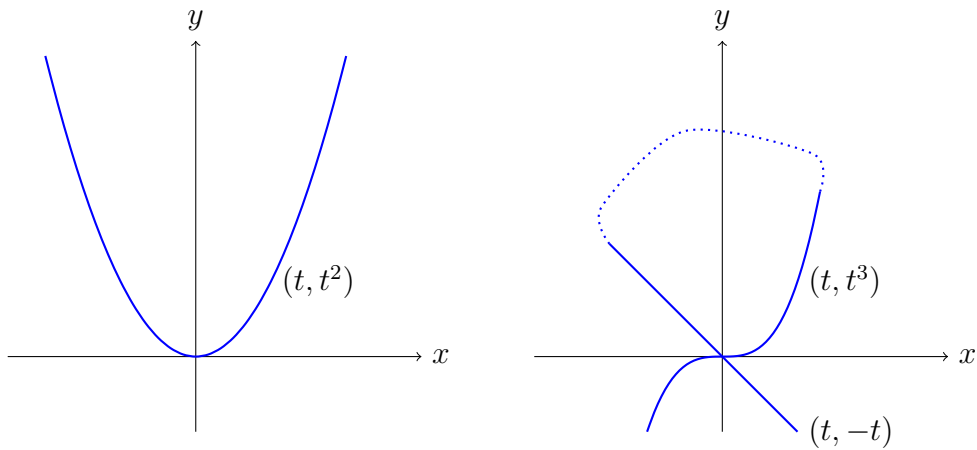
\begin{figure}
\begin{tikzpicture}
\draw[->] (-2.5, 0) -- (3, 0) node[right] {$x$};
\draw[->] (0, -1) -- (0, 4.2) node[above] {$y$};
\draw[scale=1, domain=-2:2, smooth, variable=\x, blue, thick] plot
({\x}, {\x*\x});
\draw (1,1) node[right] {$(t,t^2)$};

\def\Z{7.0}
\draw[->] (-2.5+\Z, 0) -- (3+\Z, 0) node[right] {$x$};
\draw[->] (0+\Z, -1) -- (0+\Z, 4.2) node[above] {$y$};
\draw[scale=1, domain=-1+\Z:1.3+\Z, smooth, variable=\x, blue, thick] plot ({\x}, {(\x-\Z)*(\x-\Z)*(\x-\Z)});
\draw [color=blue, thick]  (-1.5+\Z,1.5) -- (1+\Z,-1);
\draw [dotted, color=blue, thick] plot [smooth] coordinates { (-1.5+\Z,1.5)
(-1.6+\Z, 2) (-.5+\Z,3) (8.2, 2.7) (1.3+\Z, 2.197)};

\draw (1+\Z,1) node[right] {$(t,t^3)$};
\draw (1+\Z,-1) node[right] {$(t,-t)$};

\end{tikzpicture}
\caption{Curves of finite type.}

\end{figure}

\begin{figure}
\begin{tikzpicture}

\def\Z{7.0}
\draw[->] (-2.5, 0) -- (3, 0) node[right] {$x$};
\draw[->] (0, -1) -- (0, 4.2) node[above] {$y$};
\draw[scale=1, domain=-2:2, smooth, variable=\x, blue, thick] plot
({\x}, {(\x)*(\x)*(\x)*(\x)*(\x)*(\x)*0.06});
\draw (1.7,1.7*1.7*1.7*1.7*1.7*1.7*0.06) node[right] {$(t,e^{-1/t^2})$};

\draw[->] (-2.5+\Z, 0) -- (3+\Z, 0) node[right] {$x$};
\draw[->] (0+\Z, -1) -- (0+\Z, 4.2) node[above] {$y$};

\draw [color=blue,thick] (-2+\Z,-1) parabola[bend at end] (-1+\Z,0);
\draw [color=blue,thick] (1+\Z,0) parabola[bend at start] (2+\Z,2);
\draw [color=blue,thick] (-1+\Z,0) -- (1+\Z,0);

\end{tikzpicture}
\caption{Curves of infinite type.}

\end{figure}
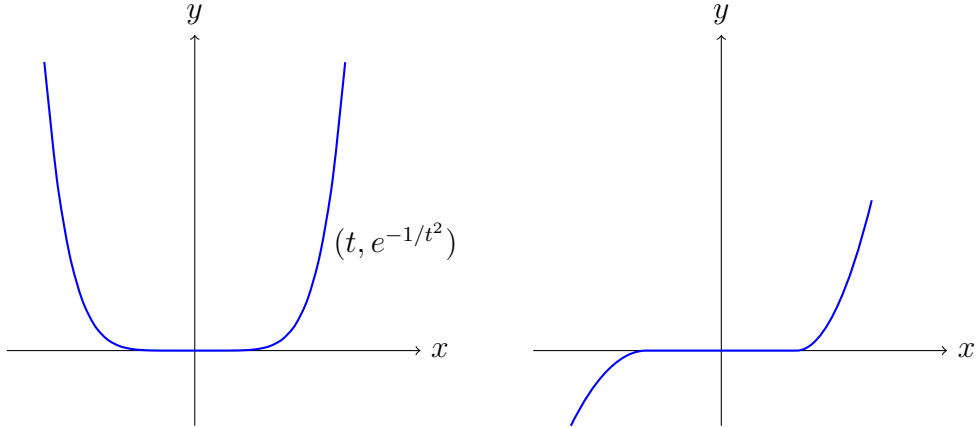

\subsection{Statement of results}  We assume for the entirety of the article that the ambient dimension $d \geq 2$ is fixed.  All constants are allowed to depend on $d$.  The following are our main results.

\begin{theorem}\label{finite type result}
Let $\Gamma \subset \R^d$ be a smooth curve of finite type at the origin.  Then $\Gamma$ is unavoidable.
\end{theorem}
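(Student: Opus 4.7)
The plan is a Fourier-analytic argument pairing a Frostman measure on $K$ against a ``curve measure'' on $\Gamma$ whose Fourier decay comes from the finite-type hypothesis via van der Corput's lemma. Let $\mathtt{N}$ denote the type of $\Gamma$ at the origin. By definition of type $\mathtt N$ at the origin, there is a smooth $\Phi \colon [-\delta_0, \delta_0] \to \R^d$, vanishing of type $\mathtt{N}$ at $0$, with $\Phi([-\delta_0, \delta_0]) \subset \Gamma$. Shrinking $\delta_0$ if necessary, a compactness argument over $[-\delta_0, \delta_0] \times S^{d-1}$ produces the following uniform structural fact: for each $(s, u)$ in this set there is some $k(s, u) \in \{1, \ldots, \mathtt{N}\}$ with $|u \cdot \Phi^{(k)}(s)| \geq c > 0$, where $c$ depends only on $\mathtt{N}$ and the $C^{\mathtt{N}}$-norm of $\Phi$. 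Given a Borel $K \subset \R^d$ with $\dim_{\text{H}} K > d - \varepsilon$ (with $\varepsilon = \varepsilon(\mathtt{N}) > 0$ to be chosen), I pass to a compact subset of the same Hausdorff dimension and invoke Frostman's lemma to obtain a probability measure $\mu$ on $K$ with $\mu(B(x, r)) \lesssim r^s$ for some $s \in (d - 1/\mathtt{N}, \dim_{\text{H}} K)$ --- admissible as soon as $\varepsilon < 1/\mathtt{N}$ --- giving finite $s$-energy $I_s(\mu) \asymp \int |\hat\mu(\xi)|^2 (1 + |\xi|)^{s-d} \, d\xi < \infty$.

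\textbf{Curve measure and Fourier decay.} Choose $0 < \eta < \delta \leq \delta_0$ and a nonnegative $\rho \in C_c^\infty((\eta, \delta))$. Let $\sigma$ be the pushforward by $\Phi$ of $\rho(s)\, ds$; this is a nonnegative finite measure with compact support inside $\Phi([\eta, \delta]) \subset \Gamma$, bounded away from the origin. Its Fourier transform
\[
\hat\sigma(\xi) = \int e^{-2\pi i \xi \cdot \Phi(s)} \rho(s) \, ds
\]
is estimated by partitioning $\supp \rho$ according to the index $k$ supplied above and applying the $k$-th derivative van der Corput estimate on each piece (monotonicity of $(k-1)$-th derivatives being arranged by further subdivision; cf.\ Stein, \emph{Harmonic Analysis}, Ch.~VIII, \S 3.2), which yields
\[
|\hat\sigma(\xi)| \lesssim (1 + |\xi|)^{-1/\mathtt{N}},
\]
with constants depending only on $\mathtt{N}$, $\rho$, and the $C^{\mathtt{N}}$-norm of $\Phi$. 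This is the source of the claimed uniformity of $\varepsilon$ across curves of type $\mathtt N$.

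\textbf{Bilinear form and conclusion.} With a nonnegative mollifier $\psi_\tau(x) = \tau^{-d} \psi(x/\tau)$, Parseval's identity gives
\[
C_\tau \colonequals \int\!\!\int (\sigma * \psi_\tau)(y - x) \, d\mu(x) \, d\mu(y) = \int |\hat\mu(\xi)|^2 \hat\sigma(\xi) \hat\psi(\tau \xi) \, d\xi.
\]
For $\varepsilon < 1/\mathtt{N}$ the inequality $(1 + |\xi|)^{-1/\mathtt{N}} \leq (1 + |\xi|)^{s - d}$ ensures $|\hat\mu|^2 |\hat\sigma| \in L^1$, so dominated convergence yields $\lim_{\tau \to 0} C_\tau = I := \int |\hat\mu|^2 \hat\sigma \, d\xi$. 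The heart of the argument is to show $I > 0$: splitting at a suitable frequency threshold, the low-frequency contribution is bounded below using $\hat\sigma(0) = \int \rho > 0$ and $|\hat\mu(\xi)| \geq 1/2$ for $|\xi|$ small (arranged by first restricting to a small-diameter subset of $K$); the high-frequency contribution is bounded above by a product of the van der Corput decay and a portion of $I_s(\mu)$. A careful matching of the scales of $\sigma$ (through $\eta, \delta$) with that of $K$ (through its diameter) makes the low-frequency term dominate. Once $I > 0$ is established, $C_\tau > 0$ for all sufficiently small $\tau$, producing pairs $(x_\tau, y_\tau) \in K \times K$ with $y_\tau - x_\tau$ in the $\tau$-neighborhood of $\supp \sigma \subset \Gamma \setminus \{0\}$. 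Since $\Gamma$ is closed and $\supp \sigma$ is bounded away from $0$, a compactness argument produces a genuine pair $(x, y) \in K \times K$ with $y - x \in \Gamma \setminus \{0\}$.

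The principal obstacle is executing the lower bound $I > 0$: the low-frequency main term must be calibrated against the high-frequency tail in a way that does not depend on the fine structure of $\Gamma$, only on $\mathtt N$. This requires the rescaling freedom in $\sigma$ (through the choice of $\rho$) and the passage to a small-diameter subset of $K$ (to control the ``near-$0$'' behaviour of $\hat\mu$), together with the uniformity of the van der Corput constants over curves of type $\mathtt{N}$. These ingredients together yield a threshold $\varepsilon$ depending only on $\mathtt{N}$, giving the uniformity asserted in the theorem.
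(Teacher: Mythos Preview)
Your overall architecture (Frostman measure on $K$, curve measure with van der Corput Fourier decay, bilinear pairing, compactness to extract a genuine pair) matches the paper's, and your final paragraph correctly identifies the lower bound $I>0$ as the crux.  But the resolution you sketch for that step does not work, and this is precisely the obstacle the paper (following Kuca--Orponen--Sahlsten) overcomes with a different mechanism.

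Concretely: restricting $\mu$ to a subset of $K$ of diameter $r$ gains you a low-frequency window $|\xi|\lesssim 1/r$ on which $|\widehat\mu|\ge 1/2$, but it forces the Frostman constant to be at least $r^{-s}$ and hence $I_s(\mu)\gtrsim r^{-s}$.  Simultaneously, to keep $\widehat\sigma$ essentially constant on that window you must bring $\supp\sigma$ to scale $\sim r$; then either $\widehat\sigma(0)\sim r$ (sharp cutoff) or the van der Corput constant $C_\sigma\sim 1/r$ (normalized $\rho$), so the ratio $C_\sigma/\widehat\sigma(0)\sim 1/r$ regardless.  Balancing your low piece $\sim A^d\widehat\sigma(0)$ against your high piece $\lesssim C_\sigma A^{d-s-1/\mathtt N}I_s(\mu)$ with $A\sim 1/r$ leaves a deficit of order $r^{1/\mathtt N-1}$, which diverges as $r\to 0$.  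The two scalings are exactly in tension, and a Hausdorff-dimension hypothesis alone gives no control on the ``moderate'' frequencies just above $A$; this is why earlier results of this type (\L aba--Pramanik, Henriot--\L aba--Pramanik, Fraser--Guo--Pramanik) required Fourier-dimension or Fourier-decay hypotheses on $\mu$.

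The paper's fix is not a sharper scale-matching but two genuinely new ingredients you omit: (i) an \emph{anisotropic} rescaling $\Phi^j(s)=(2^{\mathtt n_1 j}\Phi_1(2^{-j}s),\ldots,2^{\mathtt n_d j}\Phi_d(2^{-j}s))$ under which the van der Corput constants for the curve measure stay uniformly bounded in $j$; and (ii) construction of a measure $\mu$ on an anisotropic blow-up of $K$ that satisfies a \emph{spectral gap} condition $\int_{\mathtt A\le|\xi|\le\mathtt B}|\widehat\mu|^2\le \mathtt A^{-4d}$ in addition to finite energy.  The configuration integral is then split into low/moderate/high; the moderate piece is killed by the spectral gap, not by van der Corput decay.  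The main term is obtained in physical space from a ball condition, not from a low-frequency Fourier estimate.  Without these ideas your outline reduces to the pre-KOS strategy, which is known to require more than a Hausdorff-dimension lower bound.
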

A stronger quantitative version of Theorem \ref{finite type result} is given in Theorem \ref{uniform result} below, where the dimensional threshold is shown to be uniform across the class of curves of fixed type.

As a consequence of Theorem \ref{finite type result}, a full-dimensional set must contain every two-point pattern of finite type.
\begin{corollary}
Let $K \subseteq \R^d$ be a Borel set with $\dim_{\mathrm{H}} K = d$.  Then $(\Gamma \setminus \{0\}) \cap (K - K) \neq \emptyset$ for every smooth curve $\Gamma \subset \R^d$ of finite type at the origin.
\end{corollary}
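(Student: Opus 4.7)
The plan is to derive this statement as an immediate corollary of Theorem \ref{finite type result}, with no further ingredients required. The only observation needed is that the dimensional threshold $\varepsilon$ supplied by unavoidability, though it may depend on $\Gamma$, is strictly positive.

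More precisely, fix any smooth curve $\Gamma \subset \R^d$ of finite type at the origin. Theorem \ref{finite type result} asserts that $\Gamma$ is unavoidable, which by the definition in the introduction means there exists some $\varepsilon = \varepsilon(\Gamma) > 0$ such that every Borel set $K' \subseteq \R^d$ with $\dim_{\mathrm{H}} K' > d - \varepsilon$ satisfies $(\Gamma \setminus \{0\}) \cap (K' - K') \neq \emptyset$. Applying this with $K' = K$, and using that $\dim_{\mathrm{H}} K = d > d - \varepsilon$, the conclusion $(\Gamma \setminus \{0\}) \cap (K - K) \neq \emptyset$ is immediate.

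The step to highlight is really just the logical one: the quantifier ``there exists $\varepsilon > 0$'' in the definition of unavoidability guarantees that any single set of full Hausdorff dimension automatically clears the threshold for every finite-type curve separately, even though no uniform $\varepsilon$ valid for all such $\Gamma$ is being claimed here. There is no obstacle in this argument; the entire content lies in Theorem \ref{finite type result}, and the corollary is simply the specialization to the case $\dim_{\mathrm{H}} K = d$.
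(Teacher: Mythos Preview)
Your proof is correct and matches the paper's approach exactly: the corollary is stated immediately after Theorem \ref{finite type result} with no separate proof, as it is an immediate consequence of unavoidability and the fact that $d > d - \varepsilon$.
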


For certain classes of smooth curves, namely graphs and polynomials, the assumption of finite type at the origin is both sufficient and necessary for unavoidability.
\begin{theorem}\label{graph result}
Let $\Gamma \subset \R^d$ be a curve that contains the origin and is the graph of a smooth function, i.e., of the form
\begin{align}\label{graph form}
\Gamma = \{(t,\Psi(t)) \colon t \in \mathtt I\}
\end{align}
for some compact interval $\mathtt I$ with $0 \in \mathtt I$ and some smooth function $\Psi \colon \mathtt I \rightarrow \R^{d-1}$ with $\Psi(0) = 0$.  Then $\Gamma$ is unavoidable if and only if $\Gamma$ is of finite type at the origin.  
\end{theorem}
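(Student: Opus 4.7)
The plan is to treat the two implications separately.

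The sufficiency direction—that a graph curve of finite type at the origin is unavoidable—is an immediate corollary of Theorem \ref{finite type result}, which applies to every smooth curve of finite type at the origin.

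For the converse I would argue by contrapositive: a graph curve of infinite type at the origin is avoidable. The canonical graph parametrization $\Phi(t) = (t, \Psi(t))$ satisfies $\Phi(0) = 0$ (since $\Psi(0) = 0$) and $\Phi(\mathtt I) = \Gamma$, so by the definition of the type of a curve, $\Gamma$ being of infinite type at the origin forces $\Phi$ to be of infinite type at $t = 0$. Hence there exists $u = (u_1, u') \in \mathbb R \times \mathbb R^{d-1}$, $u \neq 0$, with $u \cdot \Phi^{(n)}(0) = 0$ for every $n \geq 1$. Using $\Phi'(0) = (1, \Psi'(0))$ and $\Phi^{(n)}(0) = (0, \Psi^{(n)}(0))$ for $n \geq 2$, these conditions read $u_1 = -u' \cdot \Psi'(0)$ and $u' \cdot \Psi^{(n)}(0) = 0$ for all $n \geq 2$, which forces $u' \neq 0$.

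I would then reduce to a normal form. Applying a rotation within the last $d-1$ coordinates that maps $u'/|u'|$ to the basis vector $\mathbf e_{d-1}$ of $\mathbb R^{d-1}$, followed by the affine shear $(x_1, \ldots, x_d) \mapsto (x_1, \ldots, x_{d-1}, x_d - c x_1)$ where $c$ is the rotated value of $u' \cdot \Psi'(0)/|u'|$, puts the curve in the form $\tilde\Phi(t) = (t, \tilde\Psi_V(t), f(t))$, with $\tilde\Psi_V\colon \mathtt I \to \mathbb R^{d-2}$ smooth and $f\colon \mathtt I \to \mathbb R$ smooth and flat at $0$, i.e.\ $|f(t)| \leq C_N |t|^N$ for every $N \geq 1$. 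Both operations preserve Hausdorff dimension, so the transformed curve is avoidable if and only if the original is. It therefore remains to construct, for every $\varepsilon > 0$, a Borel set $K \subset \mathbb R^d$ of sufficiently small diameter with $\dim_{\mathrm H} K > d - \varepsilon$ and $(K - K) \cap (\tilde\Gamma \setminus \{0\}) = \emptyset$.

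This construction is the heart of the proof and the main obstacle. A naive product $K = [0, \rho]^{d-1} \times K_d$ with a Cantor set $K_d \subset [0, \rho]$ of dimension close to $1$ does not suffice: typically $f((-\rho, \rho) \setminus \{0\})$ is a small interval about the origin which $K_d - K_d$ cannot avoid, since any set of dimension close to $1$ has arbitrarily small nonzero differences. I would instead build $K$ as a correlated Cantor set adapted to the curve. At each dyadic scale $2^{-k}$, I would select a family of subcubes of $[0,1]^d$ whose pairwise differences dodge the $\tilde\Gamma$-neighborhood at that scale—a tube whose thickness in the $x_d$-direction decays faster than any polynomial in $2^{-k}$. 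The positions of the retained subcubes in the $x_d$ direction should be chosen as a function of their positions in the other coordinates, in order to avoid the values of $f(t)$ appearing at the corresponding scale of $|t|$. The super-polynomial decay of the tube width is what should allow the retained fraction of subcubes to approach the maximum $2^d$ at every stage, yielding a limit set of Hausdorff dimension arbitrarily close to $d$.
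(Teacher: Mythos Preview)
Your setup and the reduction to the normal form $\tilde\Phi(t)=(t,\tilde\Psi_V(t),f(t))$ with $f$ flat are correct and match the paper's reduction (the paper rotates so the flat direction is the \emph{first} coordinate rather than the last, but this is immaterial).

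The gap is in the avoiding construction. The iterative cube-selection scheme you sketch does not obviously work: at stage $k$ you are working with cubes of side $2^{-k}$ in all coordinates, so the super-polynomial thinness of the curve in the $x_d$-direction is invisible at that resolution. The bad difference vectors at scale $2^{-k}$ all have $x_d$-component within $2^{-k}$ of zero, which for cube-differences simply means ``same $x_d$-slab''; avoiding all such pairs forces you to discard a fixed positive fraction of subcubes at every stage, and the dimension drops. Your remark that the $x_d$-positions should depend on the other coordinates points toward a graph structure, but over the wrong base: a function of $(x_1,\dots,x_{d-1})$ with values in $x_d$ would need to be wildly irregular to have graph dimension near $d$, and you give no mechanism to control its differences.

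The paper's device is to take $K$ to be (a small piece of) the graph of a H\"older map $F_s\colon[0,1]\to\R^{d-1}$ with $\dim_{\mathrm H}(\graph F_s)=s$ for any prescribed $s<d$ (Proposition \ref{high dimensional graph}, due to Kahane), placed so that the \emph{domain} variable of $F_s$ is the flat coordinate. Then any difference in $K$ has the form $(s-s',F_s(s)-F_s(s'))$, and H\"older continuity gives $|F_s(s)-F_s(s')|\le C|s-s'|^\alpha$. If this difference equalled a curve point with parameter $t\ne 0$, the flat coordinate would satisfy $|s-s'|\le C_n|t|^n$ while the non-flat part satisfies $|F_s(s)-F_s(s')|\ge c_0|t|$; combining yields $c_0|t|\le C\,C_n^\alpha|t|^{n\alpha}$, which for $n\alpha>1$ forces $t=0$ once the diameter of $K$ is taken small enough. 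So the right move is not a bespoke Cantor set but an off-the-shelf high-dimensional H\"older graph aligned with the flat direction.
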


\begin{theorem}\label{polynomial result}
Let $\Gamma \subset \R^d$ be a polynomial curve that contains the origin, i.e., $\Gamma = \Phi(\mathtt I)$ for some compact interval $\mathtt I$ and some $d$-tuple $\Phi = (\Phi_1, \ldots, \Phi_d)$ of univariate polynomials $\Phi_i$ such that $\Phi(\mathtt{t}) = 0$ for some $\mathtt{t} \in \mathtt I$.  Then the following are equivalent:
\begin{enumerate}[1.]
\item{$\Gamma$ is unavoidable.}
\item{$\Gamma$ is of finite type at the origin.}
\item{$\Gamma$ is not contained in any hyperplane in $\R^d$.}
\item{There exist linearly independent polynomials $\Phi_1,\ldots, \Phi_d$ and a compact interval $\mathtt I$ such that $\Gamma = \{(\Phi_1(t), \ldots, \Phi_d(t)) \colon t \in \mathtt I\}$.} \label{item2}
\item{If $\Phi_1,\ldots,\Phi_d$ and $\mathtt I$ are any choice of polynomials and a compact interval such that $\Gamma = \{(\Phi_1(t), \ldots, \Phi_d(t)) \colon t \in \mathtt I\}$, then $\Phi_1,\ldots,\Phi_d$ are linearly independent.} \label{item3}
\end{enumerate}
\end{theorem}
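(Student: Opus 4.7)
The plan is to prove Theorem \ref{polynomial result} by establishing a triangle of implications that, together with the trivial ones, close the pentagon (1)--(5): the algebraic equivalences (2)$\Leftrightarrow$(3)$\Leftrightarrow$(4)$\Leftrightarrow$(5) via polynomial Taylor expansion, the implication (2)$\Rightarrow$(1) directly from Theorem \ref{finite type result}, and the contrapositive of (1)$\Rightarrow$(3), namely that containment of $\Gamma$ in a hyperplane forces avoidability.

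For the algebraic block, I would exploit the fact that $\Phi = (\Phi_1, \ldots, \Phi_d)$ is polynomial, so its Taylor series at any point equals $\Phi$ itself. Therefore $u \cdot \Phi^{(n)}(\mathtt t) = 0$ for every $n \geq 0$ is equivalent to the polynomial $u \cdot \Phi$ vanishing identically on $\mathtt I$, which (since $\mathtt I$ has positive length) is equivalent to $u \cdot \Phi$ being the zero polynomial, and hence to $\Gamma \subset \{u \cdot x = 0\}$. The existence of a nonzero $u$ with this property is thus equivalent to (a) $\Phi$ failing to be of finite type at $\mathtt t$, (b) $\Phi_1, \ldots, \Phi_d$ being linearly dependent, and (c) $\Gamma$ lying in a hyperplane. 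Since (c) is manifestly a property of $\Gamma$ alone, independent of the polynomial representation chosen, the equivalences (2)$\Leftrightarrow$(3), (3)$\Leftrightarrow$(4), and (3)$\Leftrightarrow$(5) all follow.

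For (2)$\Rightarrow$(1), any polynomial $\Phi$ vanishing of finite type at some $\mathtt t \in \mathtt I$ and parametrizing (a subset of) $\Gamma$ places $\Gamma$ in the hypothesis of Theorem \ref{finite type result}, yielding unavoidability. For the contrapositive of (1)$\Rightarrow$(3), suppose $\Gamma$ is contained in a hyperplane $H = \{u \cdot x = 0\}$ (necessarily through the origin). After a rotation, assume $u = e_1$. Given $\varepsilon > 0$, choose $\alpha \in (0, \varepsilon/(d-1))$ and, using a standard multi-coordinate Weierstrass construction, produce a continuous $g = (g_1, \ldots, g_{d-1}) \colon [0,1] \to \R^{d-1}$ whose components are independent $\alpha$-H\"older Weierstrass-type functions. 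Set
\[
K := \{(t, g(t)) : t \in [0, 1]\} \subset \R^d.
\]
A box-counting / Frostman analysis of vector-valued H\"older graphs yields $\dim_{\mathrm H} K \geq 1 + (d-1)(1-\alpha) > d - \varepsilon$. Moreover, $K$ is a graph over its first coordinate, so distinct points of $K$ have distinct first coordinates; thus any $x, y \in K$ with $x - y \in H$ must coincide, giving $(K - K) \cap H = \{0\}$. Since $\Gamma \setminus \{0\} \subset H \setminus \{0\}$, we conclude $(K - K) \cap (\Gamma \setminus \{0\}) = \emptyset$, exhibiting avoidability.

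The main technical obstacle I foresee is the rigorous lower bound $\dim_{\mathrm H} K \geq 1 + (d-1)(1-\alpha)$. The matching upper bound from the H\"older exponent is standard, but attaining it (or coming within $\varepsilon$ of it) requires either constructing a suitable Frostman measure on the graph or invoking classical dimension formulas for vector-valued Weierstrass graphs, e.g., via self-affine IFS analysis or Mauldin--Urba\'nski-type arguments. Once this estimate is secured, the remaining work — the Taylor-polynomial identifications, the linear-algebraic step that hyperplane containment is a representation-free property, and the injectivity in the $e_1$-direction — is routine.
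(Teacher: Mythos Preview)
Your proof is structurally identical to the paper's: the equivalences (2)--(5) follow from the Taylor-polynomial observation you give (the paper leaves these to the reader as ``straightforward''), (2)$\Rightarrow$(1) is Theorem~\ref{finite type result}, and the remaining implication is obtained by showing that a curve contained in a hyperplane $\{x_1 = 0\}$ is avoided by any high-dimensional graph over the $x_1$-axis, since such a graph meets each translate of the hyperplane in a single point. The only substantive divergence is the source of that graph. You propose a vector-valued Weierstrass construction and correctly flag the Hausdorff-dimension lower bound $\dim_{\mathrm H} K \geq 1 + (d-1)(1-\alpha)$ as the crux; this is a genuinely hard problem --- for the scalar Weierstrass function it was open for decades and settled only recently, and the vector-valued case is not standard. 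The paper sidesteps this entirely by invoking Kahane's Gaussian-process construction (Proposition~\ref{high dimensional graph}), which for every $s \in [1,d)$ produces an almost-sure H\"older function $F_s \colon [0,1] \to \R^{d-1}$ with $\dim_{\mathrm H}(\graph F_s) = s$ exactly, via a potential-theoretic argument. Citing such a result (Kahane, or Besicovitch--Ursell in $d=2$) eliminates the obstacle you identified; insisting on a direct Frostman analysis of a deterministic Weierstrass graph would make this step disproportionately difficult relative to its role in the theorem.
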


We turn our attention now to a quantitative formulation of the qualitative statement in Theorem \ref{finite type result}. For every smooth curve $\Gamma \subset \R^d$ of finite type at the origin, Theorem \ref{finite type result} gives a constant $\varepsilon > 0$, possibly depending on $\Gamma$, such that every Borel set with Hausdorff dimension exceeding $d - \varepsilon$ contains a pair of the form $\{x,x+\gamma\}$ with $\gamma \in \Gamma \setminus \{0\}$. It is natural to ask how this $\varepsilon$ might depend on $\Gamma$. Our proof of Theorem \ref{finite type result} did not provide the optimal threshold value of $\varepsilon$ for a given $\Gamma$; however, a careful scrutiny of the argument revealed that a value of $\varepsilon$ could be chosen so as to depend only on the ambient dimension $d$ and the type of $\Gamma$ at the origin. That $\varepsilon$ therefore works for all curves of the same type as $\Gamma$ at the origin.  We record this as a theorem: 

\begin{theorem}\label{uniform result}
For each $\mathtt N \geq d$, there exists a constant $\varepsilon_{\mathtt N} > 0$ such that $(\Gamma \setminus \{0\}) \cap (K-K) \neq \emptyset$ for every smooth curve $\Gamma \subset \R^d$ of type $\mathtt N$ at the origin and every Borel set $K \subseteq \R^d$  with $\dim_{\mathrm H} K > d-\varepsilon_{\mathtt N}$.
\end{theorem}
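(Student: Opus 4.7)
The plan is to revisit the argument for Theorem \ref{finite type result} and verify that the dimensional threshold it produces can be chosen to depend only on the type $\mathtt N$ (the ambient dimension $d$ is fixed throughout). I will achieve this by normalizing any type-$\mathtt N$ parametrization so that its behaviour near the origin lies in a bounded $C^{\mathtt N+1}$ neighbourhood of one of finitely many polynomial ``model'' curves, and then by tracking the analytic constants in the proof of Theorem \ref{finite type result} to confirm that they are uniform across such a neighbourhood. Two elementary observations underlie the normalization: unavoidability of a subset $\Gamma' \subseteq \Gamma$ containing the origin implies unavoidability of $\Gamma$ with the same threshold (so I may shrink $\Gamma$ freely to an arbitrarily small arc about $0$), and unavoidability is preserved under invertible linear maps on $\R^d$ (which are bi-Lipschitz and hence preserve Hausdorff dimension).

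\textbf{Normalization to a finite family of models.} Let $\Phi : \mathtt I \to \R^d$ parametrize $\Gamma$, vanishing of type exactly $\mathtt N$ at $0$, and set $v_n := \Phi^{(n)}(0)/n!$. Greedily choose indices $1 \leq n_1 < n_2 < \cdots < n_d \leq \mathtt N$ so that each $v_{n_i}$ is linearly independent from $\{v_{n_1}, \ldots, v_{n_{i-1}}\}$; by the type-$\mathtt N$ hypothesis, $n_d = \mathtt N$. For $\delta > 0$ small, let $L_\delta$ be the unique linear automorphism of $\R^d$ sending $\delta^{n_i} v_{n_i}$ to the standard basis vector $e_i$, and define $\tilde\Phi(s) := L_\delta\, \Phi(\delta s)$ on $s \in [-1,1]$. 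A direct calculation gives $\tilde\Phi^{(n_i)}(0)/n_i! = e_i$ by construction, while by the greedy choice every remaining low-order Taylor coefficient $\tilde\Phi^{(n)}(0)/n!$ with $n \leq \mathtt N$ is a linear combination of $e_i$ with coefficients carrying strictly positive powers of $\delta$. The bound $\|L_\delta\| \lesssim_\Phi \delta^{-\mathtt N}$ coupled with Taylor estimates further controls $\|\tilde\Phi\|_{C^{\mathtt N+1}([-1,1])}$ uniformly once $\delta$ is small enough. Consequently, as $\delta \to 0$, $\tilde\Phi$ converges in $C^{\mathtt N+1}$ to the polynomial model $P_{(n_1,\ldots,n_d)}(s) := \sum_{i=1}^d e_i\, s^{n_i}$, itself of type $\mathtt N$ at $0$ with explicit non-degeneracy constants. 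Since there are at most $\binom{\mathtt N - 1}{d-1}$ admissible tuples $(n_1, \ldots, n_d)$, one obtains a finite collection of models.

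\textbf{Uniform conclusion.} The proof of Theorem \ref{finite type result} combines Frostman regularity of a probability measure on $K$ with Fourier-analytic estimates on a smoothly weighted curve measure $\sigma_\Phi$; the only place the parametrization enters is through van der Corput / stationary phase bounds on $\widehat{\sigma_\Phi}$, whose constants depend on $\Phi$ only through a bound on finitely many derivatives on the parameter interval and a quantitative lower bound on the finite-type non-degeneracy $\inf_{|u|=1}\max_{1 \leq n \leq \mathtt N}|u \cdot \Phi^{(n)}(0)|$. Applied to each model polynomial $P_{(n_1,\ldots,n_d)}$, the proof of Theorem \ref{finite type result} produces some positive threshold, and by continuity in the $C^{\mathtt N+1}$ topology this threshold extends to a whole neighbourhood of the model. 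Since the normalization places $\tilde\Phi$ in such a neighbourhood whenever $\delta$ is taken sufficiently small, one obtains a common threshold for all curves; taking the minimum over the finite collection of models yields $\varepsilon_{\mathtt N} > 0$. This constant transfers back to the original $\Gamma$ via $L_\delta^{-1}$ and the rescaling $s \mapsto s/\delta$, both of which preserve Hausdorff dimension.

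\textbf{Main obstacle.} The principal technical point is the bookkeeping required to verify that every oscillatory integral estimate, Fourier decay bound, and dyadic decomposition employed in the proof of Theorem \ref{finite type result} depends on $\Phi$ solely through bounds on finitely many derivatives and on the non-degeneracy constant — quantities available uniformly after normalization. The difficulty is not conceptual but structural: one must re-read the proof of Theorem \ref{finite type result} line by line, isolate each $\Phi$-dependent constant, and express that dependence as a function only of the quantitative type invariants that the normalization controls. Once this separation is made explicit, the $C^{\mathtt N+1}$ convergence to a model polynomial, together with the finiteness of the model family, immediately yields the uniform threshold $\varepsilon_{\mathtt N}$.
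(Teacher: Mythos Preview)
Your proposal is correct and, once unpacked, follows essentially the same route as the paper. The paper also normalizes via an invertible linear map (Lemma~\ref{linear transformation}, reducing to the ``standard form'' $\Phi_i(t)=t^{\mathtt n_i}\phi_i(t)$ with $\phi_i(0)=1$) and then uses the anisotropic rescalings $\Phi^j(s)=(2^{\mathtt n_1 j}\Phi_1(2^{-j}s),\ldots,2^{\mathtt n_d j}\Phi_d(2^{-j}s))$, which converge to the monomial model $(s^{\mathtt n_1},\ldots,s^{\mathtt n_d})$ as $j\to\infty$. Your map $\tilde\Phi(s)=L_\delta\Phi(\delta s)$ is exactly this composition of standardization plus rescaling, with $\delta=2^{-j}$; the greedy selection of $n_1<\cdots<n_d$ recovers the same tuple $\vec{\mathtt n}$.

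The one point where the paper is more explicit than your sketch is the phrase ``by continuity in the $C^{\mathtt N+1}$ topology this threshold extends to a whole neighbourhood of the model.'' As stated this is not an argument: there is no a priori continuous map $\Phi\mapsto\varepsilon(\Phi)$ to invoke. What is actually needed (and what you correctly flag in your final paragraph) is that the oscillatory-integral and measure-construction estimates hold with constants depending only on a $C^{\mathtt N}$ bound and a quantitative non-degeneracy bound. The paper handles this by declaring certain constants \emph{admissible} (depending only on $d,\mathtt N$) and others \emph{inadmissible} (depending on $\Phi$), and then arranging matters so that the inadmissible constant $\mathtt J(\Phi)$ governs only the \emph{scale} at which the pattern is found (equivalently, how small your $\delta$ must be), while the dimensional threshold $\varepsilon$ is assembled purely from admissible ones. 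This is precisely the bookkeeping you anticipate; the paper just integrates it into the proof rather than treating it as a post-hoc verification, and in particular proves Theorems~\ref{finite type result} and~\ref{uniform result} simultaneously rather than deducing the latter from the former.
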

These results should be viewed in the context of the substantial body of literature on multi-point patterns in large sets, a genre of problems that has been explored in a variety of discrete and continuous settings; see \cite{{Balog-Pelikan-Pintz-Szemeredi},{Bergelson-Leibman}, {Bloom-Maynard}, {Bourgain-positive-density}, {Bourgain-nonlinear-Roth}, {CLP}, {Denson-Pramanik-Zahl}, {Dong-Li-Sawin}, {Durcik-Guo-Roos}, {Falconer-Yavicoli}, {FGP}, {Gowers}, {GIT1}, {GIT2}, {Han-Lacey-Yang}, {HLP}, {Keleti}, {Keleti2}, {Kuca1}, {Kuca2}, {LP}, {Maga}, {Mathe}, {Peluse1}, {Peluse2}, {Peluse-Prendiville}, {Rice}, {Shmerkin}, {Shmerkin-Suomala}, {Yavicoli1}, {Yavicoli2}}.  It is known that high Hausdorff dimension alone does not guarantee the presence of certain linear patterns, such as three-term arithmetic progressions on the real line or even linear ``parallelograms''; see \cite{Keleti}.  Many pattern existence results (both linear and nonlinear) therefore employ stronger Fourier analytic hypotheses, such as a lower bound on Fourier dimension, or existence of a measure that simultaneously obeys a ball condition and exhibits Fourier decay.  Such results may be found, for example, in \cite{LP, CLP, HLP, FGP}.  In general it is not known whether, for nonlinear patterns, these stronger hypotheses are necessary.  Because of the (heuristic) connection between nonlinearity and Fourier decay, one might hope that Fourier analytic assumptions could be avoided in the nonlinear setting. Theorems \ref{finite type result} and \ref{uniform result} confirm this for two-point patterns determined by curves.

Recent results for more general configurations also align with this view.  Greenleaf, Iosevich, and Taylor  \cite{GIT1, GIT2} have shown that nonlinear patterns are abundant in sets with high Hausdorff dimension. For instance, it is proved in \cite{GIT1} that for each $\Phi \colon \R^d \times \R^d \rightarrow \R^k$ belonging to a large class of smooth maps, there exists a threshold $s_0 > 0$ such that if $K \subseteq \R^d$ has Hausdorff dimension exceeding $s_0$, then the configuration set ${\Delta_\Phi(K) \colonequals \{\Phi(x,y) \colon x,y \in K\}}$ has nonempty interior in $\R^k$.  In particular, their results ensure abundance of patterns determined by curves.  A special case yields the following: If $K \subseteq \R^2$ and $\dim_{\text{H}} K > 3/2$, then the set $\{x_2-y_2-(x_1-y_1)^2 \colon x,y \in K\}$ has nonempty interior; see \cite[Cor.~2.8]{GIT1}.  This result says that sets with high Hausdorff dimension contain two-point patterns determined by a ``continuum'' of parabolas.  Whether any \emph{specific} parabolic pattern must be present is a rather different question.  This was answered by Kuca, Orponen, and Sahlsten \cite{KOS}, as mentioned above:  If $K \subseteq \R^2$ has Hausdorff dimension close enough to 2, then $K$ contains a pair of the form $\{x, x+(t,t^2)\}$ with $t \neq 0$.

The present article shows that many other specific nonlinear two-point patterns can be found in sets with high Hausdorff dimension (Theorems \ref{finite type result} and \ref{uniform result}).  It also offers a characterization of such patterns, provided they are determined by curves corresponding to certain function classes (Theorems \ref{graph result} and \ref{polynomial result}).  These results suggest a correspondence between sets of prescribed Hausdorff dimension and classes of two-point patterns that must be contained therein.  A statement along the lines of the following seems plausible:  

\emph{For every $s_0 \in (0,d)$, there exists a class $\fC = \fC(s_0)$ of smooth curves in $\R^d$ such that (i) $(\Gamma \setminus \{0\}) \cap (K - K) \neq \emptyset$ for every $\Gamma \in \fC$ and every Borel set $K \subseteq \R^d$ with $\dim_{\mathrm H} K > s_0$, and (ii) for every $s < s_0$ and every $\Gamma \in \fC$ there exists a Borel set $K \subseteq \R^d$ with $\dim_{\mathrm H} K > s$ such that $(\Gamma \setminus \{0\}) \cap (K-K) = \emptyset$.}

 We envision that such a class $\fC(s_0)$ might consist of curves of bounded type, for some bound depending quantitatively on $s_0$.

\subsection{Proof structure} The rest of the article is organized as follows:  In Section \ref{sec2}, we prove Theorem \ref{uniform result} and therefore Theorem \ref{finite type result}, conditional on three key propositions that highlight the main technical tools required:
\begin{itemize} 
\item{Given measures $\mu$ and $\pi$, we introduce in Proposition \ref{existence of pattern} a general two-point configuration integral $\mathscr I[\mu; \pi]$ which, if nonzero, signals nonempty intersection of $\supp \mu - \supp \mu$ and $\supp \pi$. }
\item{Given a finite type curve $\Gamma$, the above configuration integral is estimated in Proposition \ref{spectral gap lemma} under assumptions of finite energy and spectral gap for the measure $\mu$, and with $\pi$ being a natural measure supported on $\Gamma \setminus \{0\}$.}
\item{Given a set $K$ of large enough Hausdorff dimension, Proposition \ref{existence of measure}, ensures that a measure $\mu$ satisfying the energy and spectral gap conditions (required for the application of Proposition \ref{spectral gap lemma}) exists on $K$. }
\end{itemize}   
These three propositions are proved in Sections \ref{nonvanishing integral section}, \ref{energy and spectral gap section}, and \ref{measure construction section} respectively, concluding the proof of Theorems \ref{uniform result} and \ref{finite type result}. All three propositions extend statements of a similar nature developed in \cite{KOS} for the standard parabola. However, the current versions are distinctive in the following ways: 

\begin{itemize} 
\item{The proof in \cite{KOS} relies heavily on the anisotropic dilation-invariance of the standard parabola. This feature is not available for general curves, and one of the main contributions of this article lies in providing the necessary workaround. }
\item{Dependencies of the dimensional threshold $\varepsilon$ and the pattern $\gamma \in (K-K) \cap (\Gamma \setminus \{0\})$ on underlying parameters (needed for Theorem \ref{uniform result}) are made explicit. }
\end{itemize} 

In Section \ref{sec3}, we sketch the proof of a known result on H\"older-continuous functions whose graphs have high Hausdorff dimension.  This result plays an essential role in the proofs of Theorems \ref{graph result} and \ref{polynomial result}, which appear in Sections \ref{sec4} and \ref{sec5}, respectively.  In Section \ref{metric space section}, we explain how our methods can be reinterpreted using Hausdorff dimension in a suitable metric space.  A few technical results are relegated to the \nameref{appendix}.

\section{Finite type patterns are unavoidable} \label{sec2}

\subsection{Standardization}
A curve of finite type at the origin may be represented as the image of many functions. The goal of this subsection is to find a parametrization  that is most convenient for later usage.

Let us fix a curve $\Gamma \subset \R^d$ of type $\mathtt N$ at the origin.  Then, by definition \ref{Curve-type-N} on page \pageref{Curve-type-N}, there exists a compact interval $\mathtt I \subset \R$, a point $\mathtt{t} \in \mathtt I$, and a smooth function $\Phi \colon \mathtt I \rightarrow \R^d$ that is vanishing of type $\mathtt N$ at $\mathtt{t}$ such that $\Gamma \supseteq \Phi(\mathtt I)$.  As noted above, we may assume without loss of generality that $\mathtt{t} = 0$, so that $0 \in \mathtt I$.  We may also assume that
\begin{align}\label{domain}
\mathtt I = [0,1].
\end{align}
To justify this, we need to find a function $\widetilde{\Phi} \colon [0,1] \rightarrow \R^d$ that is vanishing of type $\mathtt N$ at the origin such that $\Gamma \supseteq \widetilde{\Phi}([0,1])$.  There are many ways to construct such a function using $\Phi$; one way is the following:  Let $\mathtt I =: [a,b]$, so that $a \leq 0 \leq b$ with at least one of the inequalities being strict, and define
\begin{align*}
\widetilde{\Phi}(t) \colonequals \begin{cases}
\Phi(at) & \text{if } a < 0,\\
\Phi(bt) & \text{if } a = 0
\end{cases} \quad\quad\text{for } t \in [0,1].
\end{align*}
Replacing $\Phi$ by $\widetilde{\Phi}$, we have \eqref{domain}.

For each $i \in \{1, \ldots, d\}$, let $\Phi_i \colon \mathtt I \rightarrow \R$ denote the $i^{\text{th}}$ component of $\Phi$, so that $\Phi = (\Phi_1, \ldots, \Phi_d)$.  The functions $\Phi_i$ can be expressed as
\begin{align}\label{standard form}
\Phi_i(t) = t^{\mathtt{n}_i}\phi_i(t)
\end{align}
for some positive integers $\mathtt n_1,\ldots, \mathtt n_d \in \{1, \ldots, \mathtt N\}$ and some smooth functions $\phi_i \colon \mathtt I \rightarrow \R$ satisfying $\phi_i(0) \neq 0$.  We will make the following assumptions about $\mathtt n_i$ and $\phi_i$: 
\begin{align}\label{ordering}
&1 \leq \mathtt n_1 < \mathtt n_2 <\cdots < \mathtt n_d = \mathtt N, \\  
\label{coefficients}
&\phi_1(0) = \phi_2(0) = \cdots = \phi_d(0) = 1. 
\end{align}
These assumptions also require justification, which is provided in part by Lemma \ref{linear transformation} below.  There, we prove the existence of an invertible linear map ${\bf L} \colon \R^d \rightarrow \R^d$ such that the composition ${\bf L} \circ \Phi$ has the properties \eqref{ordering} and \eqref{coefficients}.  It is straightforward to check that ${\bf L}(\Gamma)$ is unavoidable with a dimensional threshold of $\varepsilon$ if and only if  $\Gamma$ is unavoidable with the same threshold. Therefore,  replacing $\Phi$ by ${\bf L} \circ \Phi$, we may assume without loss of generality that $\Phi$ satisfies the desired properties \eqref{ordering} and \eqref{coefficients}.  The proof of Lemma \ref{linear transformation} is deferred to the \nameref{appendix}.

\begin{lemma}\label{linear transformation}
Let $\Theta \colon \mathtt I \rightarrow \R^d$ be a smooth function that is vanishing of type $\mathtt N$ at the origin.  Then there exists an invertible linear map ${\bf L} \colon \R^d \rightarrow \R^d$ such that $\Phi \colonequals {\bf L} \circ \Theta$ obeys \eqref{standard form} with the accompanying integers $\mathtt n_i$ and functions $\phi_i$ obeying \eqref{ordering} and \eqref{coefficients}, respectively.
\end{lemma}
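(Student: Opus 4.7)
The plan is to build $\mathbf{L}$ by producing a basis of $(\R^d)^*$ that diagonalizes the Taylor expansion of $\Theta$ at the origin in the sense required by \eqref{standard form}--\eqref{coefficients}. The first step is to set up the derivative flag $V_n \colonequals \operatorname{span}\{\Theta^{(k)}(0) : 1 \leq k \leq n\}$ for $n \geq 0$, with $V_0 = \{0\}$. Each inclusion $V_{n-1} \subseteq V_n$ either leaves the dimension unchanged or increases it by exactly one, according as $\Theta^{(n)}(0) \in V_{n-1}$ or not. The type-$\mathtt N$ hypothesis translates directly to $V_{\mathtt N}^\perp = \{0\}$, with minimality of $\mathtt N$ giving $V_{\mathtt N - 1}^\perp \neq \{0\}$, so $V_{\mathtt N} = \R^d$ while $V_{\mathtt N - 1} \subsetneq \R^d$. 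It follows that there are exactly $d$ indices $1 \leq \mathtt n_1 < \mathtt n_2 < \cdots < \mathtt n_d$ at which the dimension strictly increases, and these satisfy $\dim V_{\mathtt n_i} = i$ and $\mathtt n_d = \mathtt N$, matching the ordering in \eqref{ordering}.

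Next, setting $w_i \colonequals \Theta^{(\mathtt n_i)}(0)$, the vectors $\{w_1, \ldots, w_d\}$ form a basis of $\R^d$ with $\operatorname{span}\{w_1, \ldots, w_i\} = V_{\mathtt n_i}$ for each $i$. I take $u_1, \ldots, u_d \in \R^d$ to be the rescaled dual basis defined by $u_i \cdot w_j = \mathtt n_i!\, \delta_{ij}$, and let $\mathbf{L}$ be the linear map whose $i$-th row is $u_i^T$. Invertibility of $\mathbf{L}$ follows at once from the fact that its rows are (a rescaling of) a dual basis to a basis of $\R^d$.

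To verify that $\Phi \colonequals \mathbf{L} \circ \Theta$ has the required form, expand in Taylor series at $0$; since $\Theta(0) = 0$,
\[
\Phi_i(t) = u_i \cdot \Theta(t) = \sum_{n \geq 1} \frac{t^n}{n!}\, u_i \cdot \Theta^{(n)}(0).
\]
For $n < \mathtt n_i$, $\Theta^{(n)}(0) \in V_{\mathtt n_i - 1} = \operatorname{span}\{w_1, \ldots, w_{i-1}\}$ (using that no jump of the filtration occurs strictly between $\mathtt n_{i-1}$ and $\mathtt n_i$), so $u_i \cdot \Theta^{(n)}(0) = 0$. At $n = \mathtt n_i$, $u_i \cdot w_i = \mathtt n_i!$, making the $t^{\mathtt n_i}$ coefficient equal to $1$. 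Hence $\Phi_i(t) = t^{\mathtt n_i}\phi_i(t)$ with $\phi_i$ smooth on $\mathtt I$ (the power series for $\phi_i$ converges and agrees with $\Phi_i(t)/t^{\mathtt n_i}$ for $t \neq 0$, by Taylor's theorem with remainder) and $\phi_i(0) = 1$, which is precisely \eqref{standard form} together with \eqref{coefficients}.

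The argument is essentially linear algebra once the flag $\{V_n\}$ is in place, so I do not foresee a real obstacle. The only delicate point is invoking the minimality clause in the definition of type-$\mathtt N$ to conclude $\mathtt n_d = \mathtt N$ rather than merely $\mathtt n_d \leq \mathtt N$; this hinges on the properness of $V_{\mathtt N - 1}$, which is a direct translation of the minimality. The rescaling by $\mathtt n_i!$ in the dual basis is chosen precisely to cancel the $1/\mathtt n_i!$ coming from the Taylor series, so that $\phi_i(0) = 1$ is obtained without further adjustment.
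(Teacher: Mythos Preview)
Your argument is correct, and it takes a genuinely different route from the paper's proof. The paper proceeds iteratively: it first defines the ``vanishing pattern'' $(\mathtt m_1,\ldots,\mathtt m_d)$ of $\Theta$ (the orders of vanishing of the components), applies one linear map to force the entry $\mathtt N$ into the pattern, and then repeatedly applies elementary shear maps of the form $x_{i_1} \mapsto x_{i_1} - c\,x_{i_0}$ to increase a repeated entry of the pattern until all entries are distinct, finishing with a permutation and a diagonal rescaling. Your approach, by contrast, identifies the correct exponents $\mathtt n_1 < \cdots < \mathtt n_d$ in one stroke as the jump indices of the derivative flag $V_n = \operatorname{span}\{\Theta^{(k)}(0): 1 \le k \le n\}$, and then writes down $\mathbf L$ directly as (a rescaling of) the dual basis to $\{\Theta^{(\mathtt n_i)}(0)\}$. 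This is more conceptual and avoids the inductive bookkeeping; the paper's version is more hands-on and makes the elementary row operations explicit.

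One small wording issue: your parenthetical ``the power series for $\phi_i$ converges'' is not warranted, since $\Theta$ is only assumed smooth, not analytic. What you actually need (and what ``Taylor's theorem with remainder'' does give) is the standard division lemma: if $f\in C^\infty$ satisfies $f^{(k)}(0)=0$ for $0\le k<m$, then $f(t)=t^m g(t)$ with $g\in C^\infty$ and $g(0)=f^{(m)}(0)/m!$, via the integral remainder $f(t)=\frac{t^m}{(m-1)!}\int_0^1(1-s)^{m-1}f^{(m)}(st)\,ds$. With that correction the proof is complete.
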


If $\Phi\colon \mathtt I \rightarrow \mathbb R^d$ obeys \eqref{domain}--\eqref{coefficients}, then we say that $\Phi$ is in {\em{standard form}}. Lemma \ref{linear transformation} (and the discussion preceding it) implies that any curve of finite type at the origin is, after a harmless invertible linear transformation, the image of a function in standard form.

A constant appearing in the proof of Theorem \ref{uniform result} (or any propositions used therein)  is said to be {\em{admissible}} if it depends only on $d$ and $\mathtt N$. In particular, the dimensional threshold $\varepsilon_{\mathtt N}$ provided by the theorem is to be admissible. It will therefore be important to indicate admissibility, or otherwise, of constants that appear in the proof.
 
\subsection{A few key propositions} We now formulate the main steps from which Theorem \ref{uniform result}, and hence Theorem \ref{finite type result}, will easily follow.  Each step, including any accompanying definitions, is a quantitative adaptation of a similar idea occurring in \cite{KOS}. These propositions will be proved in Sections \ref{nonvanishing integral section}, \ref{energy and spectral gap section}, and \ref{measure construction section} respectively.

\subsubsection{nonvanishing of a configuration integral} 
A recurring feature in the study of configurations is the formulation of an appropriate integral whose positivity signals the presence of the desired configuration. We describe below the integral relevant to our problem.

\begin{proposition}\label{existence of pattern}
Fix a Schwartz function $\psi \colon \R^d \rightarrow \C$, and set $\psi_\delta \colonequals \delta^{-d}\psi(\delta^{-1}\cdot)$ for $\delta > 0$.  Let $\mu$ be any compactly supported Borel probability measure on $\R^d$, and let $\pi$ be any finite Borel measure on $\R^d$.  Assume that
\begin{align}\label{configuration integral}
\mathscr{I}[\mu, \pi] \colonequals  \liminf_{\delta \searrow 0}\Big\vert\int (\mu \ast \psi_\delta) \ast \pi \,d\mu\Big\vert > 0.
\end{align}
Then $\supp \pi \cap (\supp \mu - \supp \mu) \neq \emptyset$.
\end{proposition}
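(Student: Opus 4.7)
I would argue by contrapositive: assuming $\supp\pi \cap (\supp\mu - \supp\mu) = \emptyset$, I will show $\mathscr{I}[\mu,\pi] = 0$. The key observation is that $\supp\mu - \supp\mu$ is compact (as the continuous image of the compact set $\supp\mu \times \supp\mu$), while $\supp\pi$ is closed, so disjointness forces a strictly positive separation
\[
\eta \colonequals \dist\bigl(\supp\pi,\ \supp\mu - \supp\mu\bigr) > 0.
\]
This is a standard fact about a compact set and a disjoint closed set in $\R^d$, and it is the one geometric input of the proof.

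Next, I would unfold the configuration integral using Fubini (justified because $\psi_\delta \in L^1$ and $\mu,\pi$ are finite) as a triple integral
\[
\int (\mu \ast \psi_\delta) \ast \pi \,d\mu \;=\; \int\!\!\int\!\!\int \psi_\delta\bigl((x-w)-z\bigr)\, d\mu(x)\,d\mu(w)\,d\pi(z).
\]
For every triple $(x,w,z)$ in the support of this integral one has $x-w \in \supp\mu - \supp\mu$ and $z \in \supp\pi$, whence $|(x-w)-z|\geq \eta$. Now I would invoke the Schwartz decay of $\psi$: for any $N > d$ there is a constant $C_N$ with $|\psi(u)|\leq C_N(1+|u|)^{-N}$, so
\[
\bigl|\psi_\delta(y)\bigr| \;\leq\; C_N\, \delta^{N-d}\, (\delta + |y|)^{-N} \;\leq\; C_N\, \delta^{N-d}\, \eta^{-N}
\]
uniformly for $|y|\geq \eta$.

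Plugging this pointwise bound into the triple integral and using the finiteness of $\mu(\R^d)$ and $\pi(\R^d)$ yields
\[
\Bigl|\int (\mu \ast \psi_\delta) \ast \pi \,d\mu\Bigr| \;\leq\; C_N\, \eta^{-N}\, \mu(\R^d)^2\, \pi(\R^d)\, \delta^{N-d},
\]
which tends to $0$ as $\delta \searrow 0$ once $N > d$. Hence $\mathscr{I}[\mu,\pi] = 0$, contradicting the hypothesis. I do not anticipate a real obstacle here; the only points requiring care are (i) citing the compact-versus-closed separation correctly (the compactness of $\supp\mu$ is used crucially, and is built into the hypothesis), and (ii) confirming that Fubini applies so that the convolutions can be rearranged into the symmetric triple integral above.
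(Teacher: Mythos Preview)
Your proof is correct and, in fact, cleaner than the paper's. The paper argues directly: from the lower bound on the integral it locates, for each small $\delta$, a point $x_\delta \in \supp\mu$ where $|(\mu\ast\pi_\delta)(x_\delta)|$ is large, then splits the integral $\int \pi_\delta(x_\delta - y)\,d\mu(y)$ according to whether $\dist(x_\delta - y,\supp\pi)$ exceeds $\sqrt{\delta}$; Schwartz decay kills the far piece, forcing the existence of $y_\delta \in \supp\mu$ with $\dist(x_\delta - y_\delta,\supp\pi)\leq\sqrt{\delta}$, and a compactness argument extracts a limiting pair $(x,y)$ with $x-y\in\supp\pi$. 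Your contrapositive route bypasses all of this: the single observation that $\supp\mu-\supp\mu$ is compact and hence uniformly separated from the closed set $\supp\pi$ lets you bound the entire triple integral at once. The paper's argument is slightly more constructive in that it actually exhibits the point of intersection, but for the stated proposition your approach is shorter and equally rigorous.
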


In our proof of Theorem \ref{uniform result}, we will apply Proposition \ref{existence of pattern} to measures $\mu$ and $\pi$ supported on (affine images of) $K$ and $\Gamma \setminus \{0\}$, respectively.  This will eventually yield the desired conclusion that $(\Gamma \setminus \{0\}) \cap (K-K) \neq \emptyset$.

Although Proposition \ref{existence of pattern} holds for any choice of Schwartz function $\psi$, we will fix a convenient $\psi$ with properties that simplify certain parts of the subsequent argument.  Specifically, we now take $\psi$ to be the normalized Gaussian
\begin{align}\label{def psi}
\psi(x) \colonequals e^{-\pi |x|^2}
\end{align}
(which is its own Fourier transform) and record the following properties in particular: 
\begin{equation} \label{conditions-psi} 
\psi \geq 0, \quad\quad  \psi(0) = \|\psi\|_\infty = \|\widehat{\psi}\|_\infty = 1, \quad\quad  |\widehat{\psi}(\xi) - \widehat{\psi}(0)| \leq \pi|\xi|^2. 
\end{equation}   
Here, $\widehat{\psi}$ denotes the Fourier transform of $\psi$. For any Borel measure $\mu$, let us define 
\begin{equation} \label{def-mu-psi-delta}
\mu_\delta \colonequals \mu \ast \psi_\delta,
\end{equation}
where $\psi_\delta \colonequals \delta^{-d}\psi(\delta^{-1}\cdot)$, as above.

\subsubsection{Role of energy and spectral gap in identifying patterns} \label{spectral gap proposition section}
Proposition \ref{existence of pattern} is quite general, in the sense that it does not require any special properties of $\mu$ or $\pi$. Our next goal is to ensure that, for a given curve $\Gamma$ of finite type at the origin, \eqref{configuration integral} holds for an appropriate choice of $\mu$ and $\pi$ with $\pi$ (essentially) supported on $\Gamma \setminus \{0\}$. We will also need to describe the admissible and inadmissible constants involved in this choice.

Toward this end, let $\Phi \colon [0,1] \rightarrow \R^d$ be any function in standard form that is vanishing of type $\mathtt N$ at the origin.  For each $i \in \{1,\ldots, d\}$, assumptions  \eqref{standard form}--\eqref{coefficients} imply that
\begin{align*} 
&\Phi_i^{(\mathtt n_i)}(0) = \mathtt n_i! \phi_i(0) = \mathtt n_i! \in [1, \mathtt N!]\\
\intertext{and}
&\lim_{t \searrow 0} \frac{\Phi_i^{(\ell)}(t)}{t^{\mathtt n_i-\ell}} = \frac{\mathtt n_i!}{(\mathtt n_i-\ell)!} \in [1, \mathtt N!] \quad \text{for } 0 \leq \ell < \mathtt n_i.
\end{align*}  
Here, $\Phi_i^{(\ell)}$ denotes the $\ell^{\text{th}}$ derivative of $\Phi_i$, with the convention that $\Phi_i^{(0)} \equiv \Phi_i$. Using the smoothness of $\Phi$ near the origin, one can find a large integer $\mathtt J_0 = \mathtt{J}_0(\Phi)$ depending on $\Phi$ (and therefore inadmissible) such that
\begin{align} \label{c1}
&| \Phi_i^{(\ell)}(t) | \leq 2\mathtt N! |t|^{\mathtt n_i-\ell} \quad  \text{for all } t \in [0, 2^{- \mathtt J_0}] \text{ and } 0\leq \ell < \mathtt n_i,\\
\intertext{and}
&\frac{1}{2} \leq | \Phi_i^{(\mathtt n_i)}(t) | \leq 2 \mathtt N! \quad  \text{for all } t \in [0, 2^{-\mathtt J_0}].  \label{c2}
\end{align}
Property \eqref{standard form} and the lower bound in \eqref{c2} together imply that
\begin{align} 
\Phi(t) \neq 0 \quad \text{for all } t \in (0,2^{-\mathtt J_0}].  \label{nonzero property}
\end{align}  
Thus, the origin is the single isolated zero of $\Phi$ on $[0, 2^{-\mathtt J_0}]$.

Let us define the rescaled functions
\begin{align}\label{rescaled Phi}
\Phi^j \colonequals (2^{\mathtt n_1j}\Phi_1(2^{-j}\cdot),\ldots,2^{\mathtt n_dj}\Phi_d(2^{-j}\cdot)), \quad \quad j \geq 0,
\end{align}
which interpolate between $\Phi$ (when $j=0$) and the monomial curve $t \mapsto (t^{\mathtt n_1},\ldots,t^{\mathtt n_d})$ (when $j = \infty$).  For each $j \geq \mathtt{J}_0$ and $c \in (0,1]$, let $\pi =  \pi[\Phi; j,c]$ denote the singular measure defined by the formula
\begin{align}\label{measure definition}
\int fd\pi  \colonequals \int_{c}^1 f(\Phi^j(s))ds
\end{align}
and supported on $\Phi^j([c,1]) \subset \R^d \setminus \{0\}$.

For $\sigma \in (0,d)$, let $I_{\sigma}(\mu)$ denote the $\sigma$-dimensional energy of a Borel measure $\mu$:
\begin{align}\label{def energy}
I_\sigma(\mu) \colonequals \iint |x-y|^{-\sigma}d\mu(x)d\mu(y) = \gamma(d, \sigma) \int|\widehat{\mu}(\xi)|^2|\xi|^{\sigma-d} d\xi;
\end{align}
here, $\gamma(d, \sigma)$ is a positive constant (see \cite[\S 3.4--3.5]{Mattila2}), and $\widehat{\mu}$ denotes the Fourier transform of $\mu$. For each positive integer $N$, let
\begin{align}\label{def sigma_N}
\sigma_N \colonequals d-\frac{1}{2N} \quad\quad \text{and} \quad\quad \gamma_N \colonequals \gamma(d,\sigma_N).
\end{align}

\begin{proposition}\label{spectral gap lemma}
For each $\mathtt N \geq d$, there exists an admissible constant $\mathtt{L}_{\mathtt N} \geq 1$ with the following property:  Let $\Phi \colon [0,1] \rightarrow \R^d$ be any function in standard form that is vanishing of type $\mathtt N$ at the origin.  Then there exists an (inadmissible) integer $\mathtt J = \mathtt J(\Phi) \geq \mathtt J_0(\Phi)$ such that if
\begin{itemize}
\item{{$\mathtt{A},\mathtt{B},\mathtt{C}$ are any choice of constants satisfying
\begin{equation} \label{AB conditions} 
\mathtt{A}^d \geq 4\mathtt L_{\mathtt N}^2, \quad\quad \mathtt B \geq (\mathtt L_{\mathtt N} \mathtt A^{4d}  \mathtt C)^{2\mathtt N}, \quad\quad \mathtt C \geq 1,
\end{equation}
and}}
\item{$\mu$ is any Borel probability measure on $[0,1]^d$ that obeys the energy and spectral gap conditions
\begin{align}\label{spectral gap condition}
I_{\sigma_{\mathtt N}}(\mu) \leq \mathtt C \quad\quad \text{and} \quad\quad \int_{|\xi| \in [\mathtt A, \mathtt B]}|\widehat{\mu}(\xi)|^2 d\xi \leq \mathtt A^{-4d},
\end{align}}
\end{itemize}
then 
\begin{align}\label{integral lower bound}
\mathscr{I}[\mu, \pi[\Phi; j, \mathtt A^{-6d}]] \geq \mathtt{A}^{-4d} \quad \text{for all $j \geq \mathtt J$}.
\end{align}
\end{proposition}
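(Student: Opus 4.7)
The plan is to apply Parseval on the Fourier side to write
\[
\int (\mu \ast \psi_\delta) \ast \pi \, d\mu = \int |\widehat\mu(\xi)|^2 \widehat\psi(\delta\xi) \widehat\pi(\xi) \, d\xi,
\]
then pass to the limit $\delta \to 0$ by dominated convergence, reducing the task to lower-bounding $\bigl|\int |\widehat\mu|^2 \widehat\pi \, d\xi\bigr|$ by $\mathtt A^{-4d}$. A key auxiliary tool is a uniform van der Corput estimate $|\widehat\pi(\xi)| \leq C_{\mathtt N} (1 + |\xi|)^{-1/\mathtt N}$ with an admissible $C_{\mathtt N}$, which I would establish by verifying that for the rescaled curve $\Phi^j$ with $j$ large (inadmissibly, depending on $\Phi$), at each $s \in [\mathtt A^{-6d}, 1]$ and each unit direction $u$, some derivative $|u \cdot (\Phi^j)^{(\ell)}(s)|$ with $\ell \leq \mathtt N$ is bounded below by an admissible constant. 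This step uses the uniform derivative estimates \eqref{c1}-\eqref{c2} and the standard-form structure of $\Phi$.

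Next, I would split the integral into low ($|\xi| < \mathtt A$), mid ($\mathtt A \leq |\xi| \leq \mathtt B$), and high ($|\xi| > \mathtt B$) ranges. The mid range is controlled by $\|\widehat\pi\|_\infty \leq 1$ times the spectral gap hypothesis, giving at most $\mathtt A^{-4d}$. The high range is controlled by van der Corput and the energy bound:
\[
\int_{|\xi| > \mathtt B} |\widehat\mu|^2 |\widehat\pi| \, d\xi \leq C_{\mathtt N} \mathtt B^{-1/(2\mathtt N)} \int |\widehat\mu|^2 |\xi|^{-1/(2\mathtt N)} \, d\xi \leq C_{\mathtt N} \mathtt B^{-1/(2\mathtt N)} \mathtt C / \gamma_{\mathtt N},
\]
which is at most $\mathtt A^{-4d}/4$ once $\mathtt B \geq (\mathtt L_{\mathtt N} \mathtt A^{4d} \mathtt C)^{2\mathtt N}$ and $\mathtt L_{\mathtt N}$ is chosen admissibly to dominate $4 C_{\mathtt N}/\gamma_{\mathtt N}$.

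The main obstacle is the low-range piece $J_L := \int_{|\xi| < \mathtt A} |\widehat\mu|^2 \widehat\pi \, d\xi$, which must satisfy $|J_L| \geq 3 \mathtt A^{-4d}$. Since $\mu$ is a probability measure on $[0,1]^d$, $\widehat\mu$ is $2\pi\sqrt{d}$-Lipschitz with $\widehat\mu(0) = 1$, and since $\widehat\pi$ has an admissibly bounded Lipschitz constant (using the standard-form bounds on $\Phi^j$) with $\widehat\pi(0) = 1 - \mathtt A^{-6d}$, I deduce that on a neighbourhood $|\xi| \leq c_0$ with $c_0 = c_0(d, \mathtt N)$ admissible, both $|\widehat\mu|^2 \geq 1/4$ and $\re(\widehat\pi) \geq 1/4$. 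This gives a contribution of at least an admissible $c_d^* > 0$ to $\re(J_L)$. The hard part is the complementary range $c_0 \leq |\xi| < \mathtt A$: a naive estimate combining van der Corput with the energy produces an error of order $\mathtt C$, which is inadmissible and would swamp $c_d^*$. Overcoming this is precisely where the workaround for the missing anisotropic dilation-invariance of general curves (flagged in the proof-structure discussion) enters. The strategy I anticipate is to use that $j \geq \mathtt J$ makes $\Phi^j$ closely approximate the monomial curve $(s^{\mathtt n_1}, \ldots, s^{\mathtt n_d})$, to partition $c_0 \leq |\xi| < \mathtt A$ into anisotropic dyadic shells matching the exponents $(\mathtt n_1, \ldots, \mathtt n_d)$, and on each shell to apply sharper van der Corput estimates together with the approximate anisotropic scaling of $\widehat\pi$, so that the aggregate error is controlled by admissible constants rather than by $\mathtt C$. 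The final admissible value of $\mathtt L_{\mathtt N}$ is then chosen large enough to absorb all admissible constants appearing in these estimates.
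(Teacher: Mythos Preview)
Your Fourier-side strategy founders precisely where you flag it: the low-frequency shell $c_0 \leq |\xi| < \mathtt A$. On that region the only available bounds are the trivial $|\widehat\mu|,|\widehat\pi|\leq 1$ (giving a contribution $\sim \mathtt A^d$, far larger than your main term $c_d^*$) or the van der Corput/energy combination (giving $\sim \mathtt C$, which you cannot choose small). Your proposed rescue via anisotropic shells and ``sharper van der Corput'' does not help: the obstruction is not the decay of $\widehat\pi$ but the fact that you have no smallness for $|\widehat\mu(\xi)|^2$ on a shell of volume $\sim \mathtt A^d$, and no hypothesis provides one. The approximate monomial structure of $\Phi^j$ buys you nothing here either, since even for the exact monomial curve the same issue arises.

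The paper avoids this problem by \emph{not} taking $\delta\to 0$ on the Fourier side. Instead it freezes a mesoscopic scale $h=\mathtt A^{-3d}$ and splits
\[
\int \mu_\delta\ast\pi\,d\mu = \underbrace{\int \mu_h\ast\pi\,d\mu}_{\mathbb I_1} + \underbrace{\int(\mu_\delta-\mu_h)\ast\pi\,d\mu}_{\mathbb I_2}.
\]
The main term $\mathbb I_1$ is handled entirely in \emph{physical space}: using $\psi\geq 0$, a ball condition $\pi(B(0;r))\gtrsim r$ (Lemma~\ref{pi ball lemma}), and a Vitali covering argument showing that $\mu$-most points $x$ satisfy $\mu(B(x;r))\gtrsim r^d$, one gets the pointwise bound $\mu_h\ast\pi(x)\gtrsim h$ on a set of $\mu$-measure $\geq 1/2$, whence $\mathbb I_1\gtrsim \mathtt A^{-3d}$. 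The Fourier decomposition is applied only to the \emph{difference} $\mathbb I_2$, whose integrand carries the factor $\widehat\psi(\delta\xi)-\widehat\psi(h\xi)$. This factor is $O((\mathtt A^{-3d}|\xi|)^2)$ on $|\xi|\leq \mathtt A$, so the low-frequency contribution to $\mathbb I_2$ is $O(\mathtt A^{-4d})$ automatically; the mid and high ranges are then handled by the spectral gap and energy/van der Corput bounds exactly as you describe. The missing idea in your proposal is this physical-space positivity argument for the main term, which is what replaces the dilation-invariance trick.
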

Here, $\mathtt{J}_0$ refers to the integer appearing in \eqref{c1}--\eqref{nonzero property}, $\sigma_{\mathtt N}$ is the index defined in \eqref{def sigma_N}, $\mathscr{I}$ refers to the configuration integral in \eqref{configuration integral}, and $ \pi$ is the measure introduced in \eqref{measure definition}.

There is nothing special about the choice of $\sigma_{\mathtt N}$ in \eqref{def sigma_N}, other than it being sufficiently close to $d$.  We could have chosen any value for $\sigma_{\mathtt N}$ from the interval $(d-1/\mathtt N, d)$ and Proposition \ref{spectral gap lemma} would still hold.  Similarly, there was some flexibility when choosing the exponent $4d$ that appears in the spectral gap condition in \eqref{spectral gap condition}.  We could replace $4d$ by any number strictly larger than $d+2$, provided we also make minor adjustments to the conditions \eqref{AB conditions} and the conclusion \eqref{integral lower bound}.  The choice of $4d$ simply gives nicer-looking exponents throughout the proof.

\subsubsection{Construction of a measure with finite energy and spectral gap}
Given a finite type curve $\Gamma$ represented by the function $\Phi$, the conclusion \eqref{integral lower bound} and Proposition \ref{existence of pattern} imply, roughly, that $(\Gamma \setminus \{0\}) \cap (\supp \mu - \supp \mu) \neq \emptyset$.  Given an arbitrary Borel set $K \subseteq \mathbb R^d$ of large Hausdorff dimension, it remains to ascertain whether a probability measure $\mu$ obeying the hypotheses of Proposition \ref{spectral gap lemma} can be found with support in $K$.  This is the objective of the next proposition, which says, in short, that such a measure can be found, not in $K$ itself but in a certain affine copy of $K$.

Before stating the proposition, we introduce the necessary notation and terminology.  Fix a $d$-tuple of positive integers $\vec{n} = (n_1,\ldots,n_d)$.  Unlike in \eqref{ordering}, the entries of $\vec{n}$ need not be distinct or ordered. Define
\begin{align}\label{def of D^*}
\fD^{\ast} =  \fD^{\ast}[\vec{n}] \colonequals \bigcup_{j \in \Z} \fD_j, \quad \text{where} \quad \fD_j = \fD_j[\vec{n}] \colonequals \Big\{a + \prod_{i=1}^d [0,2^{-n_i j}) \colon a \in \prod_{i=1}^d 2^{-n_i j}\Z  \Big\}.
\end{align}
Thus, $\mathcal D^{\ast}$ consists of all dyadic boxes in $\mathbb R^d$ of dimensions $2^{-n_1j} \times \cdots \times 2^{-n_d j}$ for some integer $j$.  We also set
\begin{align}\label{def of D_J^*}
\fD_J^{\ast} =  \fD_J^{\ast}[\vec{n}] \colonequals \bigcup_{j \geq J} \fD_j
\end{align}
for any integer $J$.  Now, fix a box $Q \in \fD_j$ and write $Q = a + \prod_{i=1}^d [0,2^{-n_i j})$.  We denote the ``length" of $Q$ by
\begin{align}\label{def ell_Q}
\ell(Q) \colonequals 2^{-j}
\end{align}
and define the rescaling function 
\begin{align}\label{rescaling map}
{\bf T}_Q(x) \colonequals (2^{n_1j}(x_1-a_1), \ldots, 2^{n_dj}(x_d-a_d))
\end{align}
that maps $Q$ onto $[0,1)^d$.  Given a finite Borel measure $\nu$ such that $\nu(Q) > 0$, the \emph{blow-up} of $\nu$ with respect to $Q$ is defined as
\begin{align}\label{def blow-up}
\nu^Q \colonequals \|\nu |_Q\|^{-1}{\bf T}_Q(\nu |_Q), 
\end{align}
where ${\bf T}_Q(\nu |_Q)$ denotes the push-forward of $\nu |_Q$ by ${\bf T}_Q$. Thus, $\nu^Q$ is always a probability measure supported on the closure of ${\bf T}_Q(\supp \nu \cap Q)$, a subset of $[0,1]^d$:
\begin{equation} \label{push-forward measure} \nu^Q(E) = \frac{\nu({\bf T}_Q^{-1}(E) \cap Q)}{\nu(Q)} \quad \text{ for any Borel set } E \subseteq \mathbb R^d. \end{equation}  

\begin{proposition}\label{existence of measure}
For each $\mathtt N \geq d$, there exist admissible constants $\mathtt A, \mathtt B, \mathtt C$ obeying \eqref{AB conditions} and an admissible constant $\varepsilon = \varepsilon(\mathtt A, \mathtt B, \mathtt C) > 0$ with the following property:  Let $\Phi\colon [0,1] \rightarrow \mathbb R^d$ be any smooth function in standard form that is vanishing of type $\mathtt N$ at the origin, and let $K \subseteq \R^d$ be any Borel set with $\dim_{\textnormal H} K > d-\varepsilon$. Then there exists
\begin{itemize}
\item{a dyadic box $\mathtt{Q} \in \fD_{\mathtt J}^{\ast}[\vec{\mathtt n}]$ with $\mathtt J = \mathtt{J}(\Phi)$ as in Proposition \ref{spectral gap lemma} and $\vec{\mathtt n} = (\mathtt n_1,\ldots,\mathtt n_d)$ as in \eqref{ordering}, and}
\item{a finite Borel measure $\nu$ with $\supp\nu \subseteq K \cap \overline{\mathtt{Q}}$ and $\nu(\mathtt Q) > 0$}
\end{itemize}
such that the blow-up $\mu \colonequals \nu^{\mathtt{Q}}$ satisfies the energy and spectral gap conditions in \eqref{spectral gap condition}. 
\end{proposition}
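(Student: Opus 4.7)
The plan is to extract a Frostman measure on $K$, perform a Fourier-analytic reduction of the spectral gap condition on the blow-up, and then pigeonhole over anisotropic dyadic boxes to locate a specific $\mathtt Q$. Applying Frostman's lemma, since $\dim_{\mathrm H} K > d - \varepsilon$ there is a compactly supported Borel probability measure $\rho$ on $K$ satisfying $\rho(B(x,r)) \lesssim r^{d - \varepsilon}$; taking $\varepsilon$ strictly less than $1/(2\mathtt N)$ ensures $d - \varepsilon > \sigma_{\mathtt N}$, and a standard dyadic decomposition of the energy integral gives $I_{\sigma_{\mathtt N}}(\rho) \leq \mathtt C_0$ for an admissible constant $\mathtt C_0$.

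For $Q \in \fD_j[\vec{\mathtt n}]$ with $\rho(Q) > 0$, writing $\mathbf T_Q(x) = D_j(x - a)$ with $D_j = \operatorname{diag}(2^{\mathtt n_1 j}, \ldots, 2^{\mathtt n_d j})$, a direct computation using $\widehat{\rho^Q}(\xi) = \rho(Q)^{-1} e^{2\pi i a \cdot D_j \xi} \widehat{\rho|_Q}(D_j \xi)$ and the substitution $\eta = D_j \xi$ yields
\begin{equation*}
\int_{|\xi| \in [\mathtt A, \mathtt B]} |\widehat{\rho^Q}(\xi)|^2 \, d\xi = \frac{2^{-(\mathtt n_1 + \cdots + \mathtt n_d) j}}{\rho(Q)^2} \int_{\mathcal A_j} |\widehat{\rho|_Q}(\eta)|^2 \, d\eta,
\end{equation*}
where $\mathcal A_j \colonequals \{\eta \in \R^d : \mathtt A \leq |D_j^{-1}\eta| \leq \mathtt B\}$ is an anisotropic annulus. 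Thus the spectral gap for the blow-up translates into a bound on the Fourier mass of the restriction $\rho|_Q$ over $\mathcal A_j$.

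To locate a good $\mathtt Q$, I would aggregate $\sum_{Q \in \fD_j} \int_{\mathcal A_j} |\widehat{\rho|_Q}|^2$ via Plancherel and an almost-orthogonality estimate (obtained after smoothing $\rho$ by a bump at scale $\ell(Q)$), then compare with the energy integral $\int |\widehat \rho|^2 |\eta|^{\sigma_{\mathtt N} - d}\, d\eta$ localized to $\mathcal A_j$. Averaging further over $j$ in a window $[\mathtt J, \mathtt J + L]$ for large admissible $L$, declaring a box to be \emph{bad} when its blow-up fails the spectral gap, one shows that the $\rho$-weighted mass of bad boxes (summed over the window) is strictly less than $1$; hence a good $\mathtt Q$ exists at some scale $j \geq \mathtt J$.

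The energy condition on $\mu = \nu^{\mathtt Q}$, with $\nu = \rho|_{\mathtt Q}$ (or a restriction to a subbox of positive $\rho$-mass), is then obtained by a change of variables in $I_{\sigma_{\mathtt N}}(\mu)$ that transfers the isotropic Frostman bound on $\rho$ — combined with the containment of $\mathtt Q$ in a ball of radius $\sim 2^{-\mathtt n_1 j}$ — to an anisotropic energy estimate for $\mu$ strong enough to give $I_{\sigma_{\mathtt N}}(\mu) \leq \mathtt C$ admissibly. The principal obstacle is the anisotropy of $\mathcal A_j$: it is elongated in the directions of large $\mathtt n_i$ and compressed in those of small $\mathtt n_i$, so the standard isotropic Plancherel-plus-energy trade-off in the averaging step must be adapted to this anisotropic geometry, which is what dictates the range of $\varepsilon$ and forces $L$ to be admissible. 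A further subtlety, and the reason both conditions must be handled together, is that the chosen $\mathtt Q$ must simultaneously carry enough $\rho$-mass (to control the energy of the blow-up through the factor $\rho(\mathtt Q)^{-2}$) and exhibit small Fourier mass on $\mathcal A_j$ (for the spectral gap); both requirements have to be balanced in the pigeonhole that produces $\mathtt Q$.
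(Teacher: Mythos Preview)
There is a genuine gap at the energy step. Your proposed bound---change variables and use that $\mathtt Q$ sits in a Euclidean ball of radius $\sim 2^{-\mathtt n_1 j}$---yields at best
\[
I_{\sigma_{\mathtt N}}(\mu)\ \lesssim\ \rho(\mathtt Q)^{-1}\,2^{-\mathtt n_1 j(d-\varepsilon)},
\]
so an admissible $\mathtt C$ would force $\rho(\mathtt Q)\gtrsim 2^{-\mathtt n_1 j(d-\varepsilon)}$. But $|\mathtt Q|=2^{-\mathtt S j}$ with $\mathtt S=\sum_i \mathtt n_i>d\,\mathtt n_1$ strictly (the $\mathtt n_i$ are distinct by \eqref{ordering}); hence even when $\rho$ is Lebesgue the required lower bound on $\rho(\mathtt Q)$ fails by the factor $2^{(\mathtt S-d\mathtt n_1)j}$, and no choice of $\varepsilon$ repairs this since $j\geq\mathtt J(\Phi)$ is inadmissible and may be arbitrarily large. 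Covering the pulled-back kernel by balls of the smallest axis length instead of the largest improves the exponent but still leaves a residual $2^{c\varepsilon j}$ in the estimate. The underlying issue is structural: an \emph{isotropic} Frostman condition on $\rho$ does not transfer to an admissible Frostman (or energy) bound on the \emph{anisotropically} rescaled blow-up $\rho^{\mathtt Q}$. Your spectral-gap pigeonhole inherits the same difficulty, since the threshold for $\mathtt Q$ to be ``good'' carries the factor $\rho(\mathtt Q)^{-2}$, and the averaging you describe does not explain how to secure large $\rho(\mathtt Q)$ simultaneously with small Fourier mass on $\mathcal A_j$.

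The paper avoids both obstacles by working anisotropically from the outset. It introduces a Hausdorff content $\fH^s_{\fD^*}$ adapted to the boxes in $\fD^*[\vec{\mathtt n}]$, together with a matching Frostman lemma (producing measures $\vartheta$ with $\vartheta(Q)\leq\ell(Q)^s$ for all $Q\in\fD^*$) and a lemma bounding $I_\sigma(\vartheta)$ directly from this box condition. A density argument on $\fH^s_{\fD^*}$ locates $\mathtt Q$ so that \emph{every} $\mathtt T$-th-generation child $q\subset\mathtt Q$ satisfies $\fH^s_{\fD^*}(K\cap q)\gtrsim\ell(q)^s$; the anisotropic Frostman lemma then furnishes a measure $\vartheta_q$ on each $K\cap\overline q$. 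The key point is that $\nu$ is \emph{constructed} rather than selected: one sets $\nu=\sum_q w(q)\,\ell(\mathtt Q)^s\|\vartheta_q\|^{-1}\vartheta_q$ with weights $w(q)=\int_{{\bf T}_{\mathtt Q}(q)}\varphi$ coming from a fixed bump $\varphi$ on $[0,1)^d$. The blow-up $\mu$ then agrees with $\varphi$ on every $\mathtt T$-th-generation child of $[0,1)^d$, which forces $|\widehat\mu(\xi)-\widehat\varphi(\xi)|\lesssim|\xi|\,2^{-\mathtt T}$ and delivers the spectral gap from the Schwartz decay of $\widehat\varphi$---no Fourier pigeonhole is needed at all.
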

Here, $\overline{E}$ denotes the topological closure of $E$.

\subsection{Proof of Theorems \ref{uniform result} and \ref{finite type result}, assuming Propositions \ref{existence of pattern}--\ref{existence of measure}}

The proof is a concatenation of the three propositions, in reverse order.   Let $\Gamma \subset \R^d$ be a smooth curve that is vanishing of type $\mathtt N$ at the origin.  Let $K \subseteq \R^d$ be a Borel set with $\dim_{\mathrm H} K > d-\varepsilon$, with $\varepsilon$ as in Proposition \ref{existence of measure}.   We may assume that $\Gamma \supseteq \Phi([0,1])$, where $\Phi \colon [0,1] \rightarrow \R^d$ is in standard form and vanishing of type $\mathtt N$ at the origin.

Let $\mathtt{Q}$ and $\mu$ be as in the conclusion of Proposition \ref{existence of measure} when applied to $\Phi$ and $K$.  Thus, in particular, $\mu$ is a probability measure supported on ${\bf T}_{\mathtt{Q}}(K \cap \overline{\mathtt{Q}}) \subseteq [0,1]^d$ that satisfies the criteria \eqref{spectral gap condition} for some constants $\mathtt A, \mathtt B, \mathtt C$ obeying \eqref{AB conditions}.  Let $\mathtt j \geq \mathtt J(\Phi)$ be such that $\mathtt{Q} \in \fD_{\mathtt j}$.  Proposition \ref{spectral gap lemma} gives that $\mathscr{I}[\mu, \pi[\Phi; \mathtt{j}, \mathtt{A}^{-6d}]] > 0$, and consequently, by Proposition \ref{existence of pattern} there exists some $x \in \supp \pi[\Phi; \mathtt{j}, \mathtt{A}^{-6d}] \cap (\supp \mu - \supp \mu)$.

The measure $\pi[\Phi; \mathtt{j}, \mathtt{A}^{-6d}]$ is supported on $\Phi^{\mathtt{j}}([\mathtt{A}^{-6d},1])$, where $\Phi^{\mathtt{j}}$ is as in \eqref{rescaled Phi}, while $\mu$ is supported on ${\bf T}_{\mathtt{Q}}(K)$.  Writing $x = \Phi^{\mathtt{j}}(s)$ for some $s \in [\mathtt{A}^{-6d},1]$ and setting $\gamma \colonequals \Phi(2^{-\mathtt{j}}s)$, it follows from \eqref{nonzero property} and \eqref{rescaling map} that $\gamma \in \Gamma \setminus \{0\}$ and $\gamma \in K-K$. \qed

\begin{remark}
The above argument relies crucially on the relationship between the curve $\Gamma$ and the collection $\fD^* = \fD^*[\vec{\mathtt n}]$ of dyadic boxes.  We would like to explain how these boxes were chosen.  For simplicity, we will consider the example of $\Gamma = \im \Phi$ with
\begin{align*}
\Phi(t) = (t^2, t^3+t^4), \quad\quad t \in [0,1],
\end{align*}
and thus $\vec{\mathtt n} = (2,3)$.  Roughly speaking, Propositions \ref{spectral gap lemma} and \ref{existence of pattern} imply that if $K$ supports a measure with a spectral gap, then $K-K$ intersects $\Gamma \setminus \{0\}$ as desired.  Proposition \ref{existence of measure} provides a rectangle $\mathtt{Q} \in \fD^*$ such that the affine image ${\bf T}_{\mathtt{Q}}(K)$ of $K$ under the rescaling map for $\mathtt{Q}$ supports a measure with this property.  In an ideal scenario, the dimensions of our dyadic rectangles in $\fD^*$ would be chosen so that $\Gamma$ would be invariant under the (linear part of the) inverse scaling map ${\bf T}_{\mathtt{Q}}^{-1}$. This would then allow us to pull back the desired pattern from ${\bf T}_{\mathtt{Q}}(K)$ to $K$.  However, the curve $\Gamma$ in the present example does not enjoy any such scaling relation.  Instead, it possesses an approximate scaling relation based on its leading order behaviour:  The rescaled functions
\begin{align*}
\Phi^j(t) \colonequals (2^{2j}\Phi_1(2^{-j}t), 2^{3j}\Phi_2(2^{-j}t)) = (t^2, t^3 + 2^{-j}t^4)
\end{align*}
yield a sequence of curves $\Gamma_j \colonequals \im \Phi^j$ that approach the ``leading order'' curve
\begin{align*}
\Gamma_\infty \colonequals \{(t^2, t^3) \colon t \in [0,1]\}
\end{align*}
as $j \rightarrow \infty$, and this limit curve is invariant under the transformation $(x_1,x_2) \mapsto (2^{2j}x_1,2^{3j}x_2)$ for any $j$.  We define the rectangles in $\fD_j$ to have dimensions $2^{-2j} \times 2^{-3j}$, so that (the linear parts of) their rescaling maps coincide with this transformation.  Since Proposition \ref{spectral gap lemma} is in fact uniform in $j$ (sufficiently large), we can show that ${\bf T}_{\mathtt{Q}}(K)-{\bf T}_{\mathtt{Q}}(K)$ contains a nonzero point $\Phi^{\mathtt{j}}(s)$, where $\mathtt{j}$ is such that $\mathtt{Q} \in \fD_{\mathtt{j}}$.  As desired, the rescalings $K \rightarrow {\bf T}_{\mathtt{Q}}(K)$ and $\Phi \rightarrow \Phi^{\mathtt{j}}$ are compatible, in the sense that now $K-K$ must contain the nonzero point $\Phi(2^{-\mathtt{j}}s) \in \Gamma$.
\end{remark}

\section{High-dimensional graphs} \label{sec3}
The proofs of Theorems \ref{graph result} and \ref{polynomial result} require construction of counterexamples to unavoidability.  For these, we will utilize the existence of high-dimensional one-parameter graphs. 

\begin{proposition}\label{high dimensional graph}
For every $s \in [1,d)$, there exists a H\"older-continuous function $F_s \colon [0,1] \rightarrow \R^{d-1}$ such that 
\begin{align}
&\|F_{s}\|_{C^{0, \alpha}} < \infty \quad \text{for all} \quad  0 < \alpha < \min\Big\{\frac{1}{s}, \frac{d-s}{d-1}\Big\},\label{Holder}
\intertext{and}
&\dim_{\mathrm{H}}(\graph F_s) = s, \quad \text{where} \quad \graph F_s \colonequals \{(t, F_s(t)) \colon t \in [0,1]\};\label{counterexample}
\end{align}
here, $\| \cdot\|_{C^{0, \alpha}}$ denotes the H\"older norm 
\[ \|f\|_{C^{0, \alpha}} \colonequals \sup_{t \in [0,1]} |f(t)| + \sup_{t,t' \in [0,1] \colon t \neq t'} \frac{|f(t) - f(t') |}{|t-t'|^{\alpha}}. \] 
\end{proposition}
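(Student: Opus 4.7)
The plan is to realize $F_s$ as a sample path of a vector-valued fractional Brownian motion with a carefully tuned Hurst index. Setting
\[ H := \min\{1/s,\; (d-s)/(d-1)\} \in (0, 1], \]
I would let $B^H = (B_1^H, \dots, B_{d-1}^H) \colon [0,1] \to \R^{d-1}$ denote the centered Gaussian process whose components are independent scalar fractional Brownian motions of Hurst index $H$, so that $B^H(0) = 0$ and the increment $B^H(t) - B^H(u)$ is mean-zero Gaussian with covariance $|t-u|^{2H} I_{d-1}$.

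\emph{Hölder regularity and upper bound.} The Gaussian moment bound $\ev|B^H(t) - B^H(u)|^p \lesssim_p |t-u|^{pH}$ combined with Kolmogorov's continuity theorem would give $\|B^H\|_{C^{0,\alpha}} < \infty$ almost surely for each $\alpha \in (0, H)$; a countable intersection over rational $\alpha$ yields \eqref{Holder} almost surely. On this event, two covering estimates apply: a box-count using the $\alpha$-Hölder oscillation of $B^H$ gives $\dim_{\mathrm{H}}(\graph B^H) \leq 1 + (d-1)(1-\alpha)$, while the $\alpha$-Hölder parametrization $t \mapsto (t, B^H(t))$ exhibits $\graph B^H$ as a Hölder image of $[0,1]$, yielding $\dim_{\mathrm H}(\graph B^H) \leq 1/\alpha$. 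Sending $\alpha \nearrow H$ gives $\dim_{\mathrm H}(\graph B^H) \leq \min\{1 + (d-1)(1-H),\; 1/H\}$, which by the choice of $H$ equals $s$ exactly.

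\emph{Lower bound via Frostman.} This is the main technical step. Letting $\mu$ denote the pushforward of Lebesgue measure on $[0,1]$ under $t \mapsto (t, B^H(t))$, a Borel probability measure on $\graph B^H$, I would show $\ev\, I_\sigma(\mu) < \infty$ for every $\sigma < s$. Fubini gives
\[ \ev\, I_\sigma(\mu) = \int_0^1\!\int_0^1 \ev\bigl[(|t-u|^2 + |B^H(t) - B^H(u)|^2)^{-\sigma/2}\bigr]\, dt\, du, \]
and writing $r = |t-u|$ and substituting $y = r^H z$ in the Gaussian integral reduces the inner expectation to
\[ (2\pi)^{-(d-1)/2} \int_{\R^{d-1}} (r^2 + r^{2H}|z|^2)^{-\sigma/2} e^{-|z|^2/2}\, dz. \]
A case analysis, separating $|z| \leq r^{1-H}$ (where $r^2$ dominates) from $|z| \geq r^{1-H}$ (where $r^{2H}|z|^2$ dominates), and further separating $\sigma \leq d-1$ from $\sigma > d-1$, shows the inner expectation behaves like $r^{-H\sigma}$ when $\sigma \leq d-1$ and like $r^{-\sigma + (d-1)(1-H)}$ when $\sigma > d-1$. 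In either regime the double integral converges precisely when $\sigma < \min\{1/H,\; 1 + (d-1)(1-H)\} = s$, hence $I_\sigma(\mu) < \infty$ almost surely. Frostman's energy criterion then gives $\dim_{\mathrm H}(\graph B^H) \geq \sigma$ almost surely for each rational $\sigma < s$, and taking $\sigma \nearrow s$ completes the lower bound.

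Combining the two directions, the event on which both \eqref{Holder} and \eqref{counterexample} hold has full probability, so any realization in this event may be taken as $F_s$. The hard part is the Gaussian energy estimate: one must verify that the two competing scaling regimes of the inner integral produce exactly the two expressions whose minimum matches the upper bound sharply at $s$, which is precisely what forces the two-term form $\min\{1/s,\, (d-s)/(d-1)\}$ in \eqref{Holder}.
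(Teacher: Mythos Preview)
Your proposal is correct and takes essentially the same approach as the paper: both use (vector-valued) fractional Brownian motion with Hurst index $H = \min\{1/s, (d-s)/(d-1)\}$, which is exactly Kahane's $(1,d-1,\gamma)$ Gaussian process with $\gamma = 2H$. The paper simply cites Kahane's theorem \cite[Chapter 18, Theorem 7]{Kahane-Book} for the graph dimension, whereas you sketch that theorem's proof directly via the Frostman energy computation---so your argument is more self-contained but otherwise identical in substance.
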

This classical result, originally due to Besicovitch and Ursell \cite{Besicovitch-Ursell} for $d=2$, now has many variants in the literature; see for example \cite{{Love-Young},{Kline}, {Levy-1}, {Levy-2}, {Taylor}, {Yoder}}. It has been proved in its above-stated form by Kahane \cite{Kahane-Book}, who shows through a random argument that such functions $F_{s}$ are in fact plentiful. We briefly outline his argument, pointing the reader to the relevant sections of the text for a complete proof.

In \cite[Chapter 18]{Kahane-Book} and for $n, d \geq 1$, Kahane introduces an $(n, d, \gamma)$ Gaussian process $\{X_t \colon t \in \mathbb R^n\}$ with values in $\mathbb R^d$ such that 
\[ \mathbb E(| X_t - X_{t'}|^2) = d|t-t'|^{\gamma}. \]
Such a process is shown to exist in \cite[Chapter 18, \S 2]{Kahane-Book} when $0 < \gamma \leq 2$, with an almost sure continuous version, i.e.~with $t \mapsto X(t; \omega)$ being a continuous function of $t$ for almost every $\omega$. More precisely, the modulus of continuity $\omega_{X}$ of $X(\cdot;\omega)$ is shown to obey 
\begin{equation} \label{Holder-exponent} \omega_{X}(h) =  \sup_{|t-t'| \leq h} |X(t) - X(t')| = O \bigl(\sqrt{|h|^{\gamma} \log(1/|h|)} \bigr) \qquad \text{ almost surely }  \end{equation}
on every compact subset of $\mathbb R^n$. This is stated in equation (3) of 
\cite[Chapter 18]{Kahane-Book}, and follows from the content of \cite[Chapter 14]{Kahane-Book}. The condition \eqref{Holder-exponent} implies that (almost surely)
\begin{align}\label{Holder continuity of X(t)}
\text{$X(t)$ is H$\ddot{\text{o}}$lder continuous with exponent $\gamma/2 - \varepsilon$ for every $\varepsilon \in (0, \gamma/2)$.}
\end{align}
In \cite[Chapter 18, \S 7, Theorem 7]{Kahane-Book}, Kahane proves:

{\em{For any compact set $E \subset \mathbb R^n$,  the relation }}
\[\dim_\mathrm{H} \bigl(\operatorname{graph} X|_E \bigr) = 
\min \Bigl\{ \frac{2}{\gamma} \dim_\mathrm{H} E, \, \dim_\mathrm{H} E + \Big(1 - \frac{\gamma}{2}\Big)d \Bigr\} \]
{\em{holds almost surely}}.

Here, $\graph X|_E$ denotes the set $\{(t, X(t)) \colon t \in E \}$.  We can now obtain Proposition \ref{high dimensional graph} from Kahane's theorem as follows:  Set $n=1$ and $E = [0,1]$, and replace $d$ by $(d-1)$ and $X(t)$ by $F_{s}(t)$ to get
\begin{align}\label{dim of F_s}
\dim_\mathrm{H}(\graph F_s) = \dim_\mathrm{H}(\operatorname{graph} X|_{[0,1]}) = \min \Bigl\{\frac{2}{\gamma}, d- \frac{\gamma}{2}(d-1)  \Bigr\},
\end{align}
If we choose
\begin{align*}
\gamma = \min\Big\{\frac{2}{s}, \frac{2(d-s)}{d-1}\Big\} =  \begin{cases}
\frac{2}{s} &\text{if } 1 \leq s \leq d-1,\\
\frac{2(d-s)}{d-1} &\text{if } d-1 < s < d,
\end{cases}
\end{align*}
then \eqref{Holder continuity of X(t)} implies the first conclusion of the proposition, and \eqref{dim of F_s} and a bit of arithmetic confirm the second.  \qed

\section{Graphs of infinite type are avoidable}  \label{sec4}
The goal of this section is to prove Theorem \ref{graph result}. We will in fact prove a slightly stronger statement (Proposition \ref{generalized-graph-lemma} below), namely an analogue of the theorem for curves that are not necessarily graphs, but graph-like.  Additionally, we formulate a quantitative partial-avoidance result for graph-like curves of finite type (Proposition \ref{partial-avoidance graphs}); the proof is sketched in the Appendix.

Let us set up the relevant definition.  We say that a smooth curve $\Gamma \subset \R^d$ is {\em{graph-like}} if there exists an invertible linear transformation ${\bf L} \colon \R^d \rightarrow \R^d$ and a smooth function $\Phi \colon \mathtt I \rightarrow \R^d$ such that
\begin{enumerate}[(i)]
\item{$\mathtt I \subset \R$ is a nondegenerate compact interval,}
\item{$\Phi =: (\Phi_1, \underline{\Phi})$ is of the form $\Phi_1(t) = t^{\mathtt m}\phi(t)$ for some integer $\mathtt m \geq 1$ and some smooth function $\phi \colon \mathtt I \rightarrow \R$ such that $\inf\{|\phi(t)| \colon t \in \mathtt I\} > 0$,}
\item{${\bf L}(\Gamma) = \Phi(\mathtt I)$.}
\end{enumerate}
In the above, if ${\bf L}$ is the identity, $\mathtt m = 1$, and $\phi \equiv 1$, then $\Gamma = \{(t, \underline{\Phi}(t)) \colon t \in \mathtt I\}$ is an ordinary graph.

\begin{proposition} \label{generalized-graph-lemma}
Let $\Gamma \subset \R^d$ be a smooth graph-like curve that contains the origin.  Then $\Gamma$ is unavoidable if and only if $\Gamma$ is of finite type at the origin.  In particular, Theorem \ref{graph result} holds.
\end{proposition}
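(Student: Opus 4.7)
The ``if'' direction is immediate from Theorem \ref{finite type result}, which needs no graph-like hypothesis and applies to any smooth curve of finite type at the origin.

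For the ``only if'' direction I plan to prove the contrapositive: if $\Gamma$ is graph-like and of infinite type at the origin, then it is avoidable---that is, for each $\varepsilon > 0$ there exists a Borel set $K \subseteq \R^d$ with $\dim_{\mathrm H} K > d - \varepsilon$ and $(K - K) \cap (\Gamma \setminus \{0\}) = \emptyset$. First I would apply the linear transformation $\mathbf{L}$ from the graph-like definition (noting that both Hausdorff dimension and type at the origin are preserved under invertible linear maps) to reduce to the case $\Gamma = \Phi(\mathtt I)$ with $\Phi_1(t) = t^{\mathtt m}\phi(t)$ and $\phi$ nonvanishing.  Since $\phi$ is nonvanishing, $\Phi^{-1}(0) = \{0\}$, so $\Phi$ is a smooth parametrization of $\Gamma$ vanishing only at $t = 0$; the infinite-type hypothesis on $\Gamma$ therefore forces $\Phi$ itself to be of infinite type at $t = 0$.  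This yields a vector $u \in \R^d \setminus \{0\}$ with $u \cdot \Phi^{(n)}(0) = 0$ for every $n \geq 1$, equivalently the smooth scalar function $f(t) \colonequals u \cdot \Phi(t)$ is \emph{flat} at the origin: $|f(t)| = O_N(|t|^N)$ for every $N \geq 1$. This flat direction is the key analytic feature enabling avoidance.

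For $s \in (d - \varepsilon, d)$, Proposition \ref{high dimensional graph} provides a H\"older-continuous $F_s \colon [0,1] \to \R^{d-1}$ with $\dim_{\mathrm H} \graph F_s = s$ and H\"older exponent $\alpha$ as in \eqref{Holder}. Set $K_0 \colonequals \graph F_s$, rescaled so as to lie in a small neighbourhood of the origin (which preserves $\dim_{\mathrm H}$). The plan is to take $K$ to be a Borel subset of $K_0$ of dimension still exceeding $d - \varepsilon$, chosen to avoid a ``bad set''. To verify avoidance, suppose $(t, F_s(t)) - (t', F_s(t')) = \Phi(\tau) \in \Gamma \setminus \{0\}$ for distinct pairs in $K$ and some $\tau \ne 0$. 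Then equating first coordinates gives $\tau^{\mathtt m}\phi(\tau) = t - t'$, which admits only finitely many local branches $\tau = \tau_j(t - t')$ near $0$; taking the inner product with $u$ produces the scalar identity
\[ u \cdot \bigl( (t, F_s(t)) - (t', F_s(t')) \bigr) = f(\tau_j(t - t')). \]
The right-hand side is $O_N(|t - t'|^N)$ by flatness of $f$, whereas the left-hand side is controlled by the H\"older regularity of $F_s$. Equality can therefore only hold on a ``bad set'' $B \subseteq [0, 1]^2$ of anomalously flat pairs $(t, t')$. The plan is to extract $E \subseteq [0, 1]$ with $\dim_{\mathrm H} E$ close to $1$ such that $(E \times E) \setminus \Delta$ is disjoint from $B$ (where $\Delta$ is the diagonal), and to take $K$ to be the subgraph of $F_s$ over $E$.

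The main obstacle is this last step---controlling $B$ well enough to admit such an $E$. The hard part will be a fibre analysis: each fibre $B_{t'} = \{t : (t, t') \in B\}$ is governed by at most finitely many scalar equations and should generically be discrete, which suggests $\dim_{\mathrm H} B \leq 1$; a Cantor-type construction of $E$ would then avoid $B$ while preserving dimension close to $1$, producing the required $K$. Extra care will be needed when $\mathtt m > 1$, where the first-coordinate equation introduces multiple branches and $B$ becomes a union of finitely many component bad sets.
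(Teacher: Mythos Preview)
Your reduction and identification of the flat direction $u$ are fine, and using the H\"older graph $F_s$ from Proposition \ref{high dimensional graph} is the right starting point. But your final step---extracting $E \subseteq [0,1]$ with $(E \times E) \setminus \Delta$ disjoint from a ``bad set'' $B$ of dimension at most $1$---is a genuine gap, and in fact an unnecessary detour. You have not shown $\dim_{\mathrm H} B \leq 1$: the defining relation involves $F_s$, which is merely H\"older continuous, so the solution set of a scalar equation in $(t,t')$ need not be small. Even granting $\dim_{\mathrm H} B \leq 1$, finding $E$ of dimension close to $1$ with $E \times E$ avoiding a prescribed $1$-dimensional set is itself a delicate avoidance problem, not something one can wave through.

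The paper sidesteps all of this with a single observation you are missing: rather than keeping the graph in the original coordinates and dotting with $u$, \emph{rotate} so that the flat direction becomes the graph's parameter direction. Concretely, let $\mathbf{U}$ be unitary with $\mathbf{U}\mathtt{e}_1 = u$, set $z \colonequals \mathbf{U}^{-1} \circ \Phi = (z_1, \underline{z})$, and take $K \colonequals \mathbf{U}(\graph F_s) \cap Q$ for a small cube $Q$. Then $z_1$ is flat at $0$, so $|z_1(\mathtt t)| \leq C_n |\mathtt t|^n$ for every $n$, while $\underline{z}^{(\mathtt m)}(0) \neq 0$ (since $\Phi^{(\mathtt m)}(0) \neq 0$ but $z_1^{(\mathtt m)}(0) = 0$), so $|\underline{z}(\mathtt t)| \geq c_0 |\mathtt t|^{\mathtt m}$ near $0$. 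If $\Phi(\mathtt t) \in K - K$, then $z(\mathtt t) = (t - t', F_s(t) - F_s(t'))$, hence
\[
c_0|\mathtt t|^{\mathtt m} \leq |\underline{z}(\mathtt t)| = |F_s(t)-F_s(t')| \leq \|F_s\|_{C^{0,\alpha}}|t-t'|^\alpha = \|F_s\|_{C^{0,\alpha}}|z_1(\mathtt t)|^\alpha \leq C|\mathtt t|^{n\alpha}.
\]
Choosing $n$ with $n\alpha > \mathtt m$ forces $\mathtt t = 0$ outright. No bad set, no extraction of $E$: the rotated graph avoids $\Gamma \setminus \{0\}$ entirely.
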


\subsection{Proof of Proposition \ref{generalized-graph-lemma}}

Theorem \ref{finite type result} provides one direction of the proposition, namely that if $\Gamma$ is of finite type at the origin, then $\Gamma$ is unavoidable.  Toward proving the other direction, we assume that $\Gamma$ is of infinite type at the origin and aim to show that $\Gamma$ is avoidable.  Fix ${\bf L}$ and $\Phi \colon \mathtt I \rightarrow \R^d$ satisfying conditions (i)--(iii) in the definition of graph-like curve.  We may assume that ${\bf L}$ is the identity, since unavoidability and type are both invariant under the action of invertible linear transformations.  The hypothesis that  $\Gamma$ contains the origin, together with conditions (ii) and (iii), implies that $0 \in \mathtt I$ and that $\Phi(0) = 0$.  Moreover, $\Phi$ must be of infinite type at the origin, since we have assumed the same of $\Gamma$.  Consequently, there exists a unit vector $\mathtt u \in \R^d$ such that
\begin{align}\label{infinite type condition}
\mathtt u \cdot \Phi^{(n)}(0) = 0 \quad \text{for every } n \geq 0.
\end{align}
Let ${\bf U} \colon \R^d \rightarrow \R^d$ be the unitary matrix that maps $\mathtt{e}_1 = (1,0,\ldots, 0)$ to $\mathtt u$.  Define $z \colonequals {\bf U}^{-1} \circ \Phi$, and write $z = (z_1, \underline{z})$ with $z_1 \colon \mathtt I \rightarrow \R$ and $\underline{z} \colon \mathtt I \rightarrow \R^{d-1}$.  Then \eqref{infinite type condition} implies that
\begin{align}\label{z_1 derivative condition}
z_1^{(n)}(0) = [\mathtt{e}_1 \cdot ({\bf U}^{-1} \circ \Phi)]^{(n)}(0) = \mathtt u \cdot \Phi^{(n)}(0) = 0 \quad \text{for every } n \geq 0.
\end{align}
It follows that for each $n$, there exists $\mathtt C_n > 0$ such that
\begin{align}\label{z_1 upper bound}
|z_1(t)| \leq \mathtt C_n|t|^n \quad \text{for all } t \in \mathtt I.
\end{align}
By condition (ii), we have $\Phi^{(\mathtt m)}(0) \neq 0$ and thus
\begin{align}\label{z derivative condition}
z^{(\mathtt m)}(0) = {\bf U}^{-1} \circ \Phi^{(\mathtt m)}(0) \neq 0.
\end{align}
Properties \eqref{z_1 derivative condition} and \eqref{z derivative condition} together imply that $\underline{z}^{(\mathtt m)}(0) \neq 0$.  It follows that there exist constants $\mathtt c_0 > 0$ and $\delta > 0$ such that
\begin{align}\label{z bar lower bound}
|\underline{z}(t)| \geq \mathtt c_0|t|^{\mathtt m} \quad \text{for all } t \in \mathtt I \cap [-\delta,\delta].
\end{align}
Now, fix $s \in [1,d)$ and consider the graph of $F_s$, as defined in \eqref{counterexample}.  Given any $r > 0$, we can find a cube $Q_r \subset \R^d$ of diameter $r$ such that
\begin{align*}
\dim_\mathrm{H}({\bf U}(\graph F_s) \cap Q_r) = \dim_\mathrm{H}({\bf U}(\graph F_s)) = \dim_\mathrm{H} (\graph F_s) = s.
\end{align*}
Fix $\mathtt r$ such that
\begin{align*}
0 < \mathtt r < \mathtt c_1\min\bigg\{\delta, \bigg(\frac{\mathtt c_0}{\|F_s\|_{C^{0,\alpha}}\mathtt{C}_{\mathtt n}^\alpha}\bigg)^\frac{1}{\mathtt{n}\alpha - \mathtt m}\bigg\}^{\mathtt m},
\end{align*}
where
\begin{itemize}
\item{$\mathtt c_1 \colonequals \inf\{|\phi(t)| \colon t \in \mathtt I\} > 0$,}
\item{$\alpha$ is any exponent satisfying \eqref{Holder},}
\item{$\|F_s\|_{C^{0,\alpha}}$ is the H\"older norm of $F_s$, as in \eqref{Holder},}
\item{$\mathtt n$ is any integer such that $\mathtt{n}\alpha > \mathtt m$.}
\end{itemize}
Let
\begin{align*}
K \colonequals {\bf U}(\graph F_s) \cap Q_{\mathtt r}.
\end{align*}

We claim that $(\Gamma \setminus \{0\}) \cap (K - K) = \emptyset$.  For a contradiction, suppose there exists some $\gamma \in (\Gamma \setminus \{0\}) \cap (K-K)$.  Because $\gamma$ is a member of $\Gamma$, we can express it as $\gamma = \Phi(\mathtt{t})$ for some $\mathtt{t} \in \mathtt I$.  This $\mathtt{t}$ obeys
\begin{align*}
\mathtt c_1|\mathtt{t}|^{\mathtt m} \leq |\Phi_1(\mathtt{t})| \leq |\Phi(\mathtt{t})| = |\gamma| \leq \diam K \leq \diam Q_{\mathtt r} = \mathtt r,
\end{align*}
using that $\gamma \in K-K$.  Thus,
\begin{align}\label{t_0 upper bound}
|\mathtt{t}| < \min\bigg\{\delta, \bigg(\frac{\mathtt c_0}{\|F_s\|_{C^{0,\alpha}}\mathtt{C}_{\mathtt n}^\alpha}\bigg)^\frac{1}{\mathtt{n}\alpha - \mathtt m}\bigg\}
\end{align}
due to our choice of $\mathtt r$.  Using the graph structure of $K$, we can also express $\gamma$ as $\gamma = {\bf U}(t-t', F_s(t)-F_s(t'))$ for some $t,t' \in [0,1]$.  This leads to the relation
\begin{align*}
z(\mathtt{t}) = (t-t', F_s(t)-F_s(t')).
\end{align*}
By \eqref{t_0 upper bound}, \eqref{z bar lower bound}, \eqref{z_1 upper bound}, and the H\"older continuity of $F_s$, we have
\begin{align*}
\mathtt c_0|\mathtt{t}|^{\mathtt m} \leq |\underline{z}(\mathtt{t})| = |F_s(t)-F_s(t')| &\leq \|F_s\|_{C^{0,\alpha}}|t-t'|^\alpha\\ &= \|F_s\|_{C^{0,\alpha}}|z_1(\mathtt{t})|^\alpha \leq \|F_s\|_{C^{0,\alpha}}\mathtt{C}_{\mathtt n}^\alpha|\mathtt{t}|^{\mathtt{n}\alpha}.
\end{align*}
This inequality is compatible with \eqref{t_0 upper bound} only if $\mathtt{t} = 0$.  So $\gamma = \Phi(0) = 0$, contradicting the assumption that $\gamma \in \Gamma \setminus \{0\}$.  Since $\dim_\mathrm{H}K =s$ and $s \in [1,d)$ was arbitrary, we conclude that $\Gamma$ is avoidable. \qed

\subsection{A quantitative partial-avoidance result}\label{partial-avoidance graphs}\label{quantitative partial-avoidance}
The proof of Proposition \ref{generalized-graph-lemma} can be modified slightly to give a quantitative partial-avoidance result for graph-like curves of finite type.  To formulate such a statement, we need another definition.  Let $\Gamma$ be a graph-like curve.  There are many choices of ${\bf L}$, $\mathtt I$, and $\Phi(t) = (t^\mathtt{m}\phi(t), \underline{\Phi}(t))$ such that conditions (i)--(iii) in the definition of graph-like curve are satisfied.  The smallest integer $\mathtt m$ appearing among these parametrizations will be called the \emph{subtype} of $\Gamma$.  We sketch a proof of the following in the Appendix:
\begin{proposition}

Let $\Gamma$ be a graph-like curve of type $\mathtt N$ at the origin and of subtype $\mathtt m$.  Then for every $s < \min\big\{\frac{\mathtt N}{\mathtt m}, d-\frac{(d-1)\mathtt m}{\mathtt N}\big\}$, there exists a Borel set $K \subseteq \R^d$ with $\dim_{\mathrm H} K \geq s$ such that ${(\Gamma \setminus \{0\}) \cap (K - K) = \emptyset}$.
\end{proposition}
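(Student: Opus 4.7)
The plan is to mimic the proof of Proposition \ref{generalized-graph-lemma} essentially verbatim, but with quantitative tracking of Taylor exponents. Fix a graph-like parametrization $\Phi = (\Phi_1, \underline{\Phi}) \colon \mathtt{I} \to \R^d$ of $\Gamma$ realizing the subtype $\mathtt{m}$, so $\Phi_1(t) = t^{\mathtt{m}} \phi(t)$ with $\inf_{\mathtt{I}} |\phi| \geq \mathtt{c}_1 > 0$; assume WLOG that $\mathbf{L}$ is the identity. Because $\Gamma$ is of type exactly $\mathtt{N}$ at the origin, minimality of $\mathtt{N}$ supplies a unit vector $\mathtt{u} \in \R^d$ with $\mathtt{u} \cdot \Phi^{(n)}(0) = 0$ for every $n \in \{1,\ldots, \mathtt{N}-1\}$. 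Let $\mathbf{U}$ be the orthogonal map sending $\mathtt{e}_1$ to $\mathtt{u}$, and set $z \colonequals \mathbf{U}^{-1} \circ \Phi =: (z_1, \underline{z})$. Since $z_1^{(n)}(0) = \mathtt{u} \cdot \Phi^{(n)}(0) = 0$ for $0 \leq n \leq \mathtt{N}-1$, Taylor's theorem yields constants $\mathtt{C}, \delta > 0$ with $|z_1(t)| \leq \mathtt{C} |t|^{\mathtt{N}}$ for $|t| \leq \delta$. In the main case $\mathtt{m} < \mathtt{N}$, the identity $z^{(\mathtt{m})}(0) = \mathbf{U}^{-1} \Phi^{(\mathtt{m})}(0) \neq 0$ together with $z_1^{(\mathtt{m})}(0) = 0$ forces $\underline{z}^{(\mathtt{m})}(0) \neq 0$, giving $|\underline{z}(t)| \geq \mathtt{c}_0 |t|^{\mathtt{m}}$ on the same neighborhood.

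Given $s < \min\big\{\mathtt{N}/\mathtt{m}, d - (d-1)\mathtt{m}/\mathtt{N}\big\}$, select $\alpha$ with $\mathtt{m}/\mathtt{N} < \alpha < \min\{1/s, (d-s)/(d-1)\}$; this range is nonempty \emph{precisely} because of the upper bound on $s$. Apply Proposition \ref{high dimensional graph} to obtain $F_s \colon [0,1] \to \R^{d-1}$ with $\|F_s\|_{C^{0,\alpha}} < \infty$ and $\dim_{\mathrm{H}}(\graph F_s) = s$. Pick $\mathtt{r} > 0$ small enough so that any $\mathtt{t} \in \mathtt{I}$ with $|\Phi_1(\mathtt{t})| \leq \mathtt{r}$ obeys both $|\mathtt{t}| \leq \delta$ (via $\mathtt{c}_1|\mathtt{t}|^{\mathtt{m}} \leq |\Phi_1(\mathtt{t})|$) and
\[|\mathtt{t}|^{\mathtt{N}\alpha - \mathtt{m}} < \frac{\mathtt{c}_0}{\|F_s\|_{C^{0,\alpha}} \mathtt{C}^{\alpha}}.\]
Choose a cube $Q_{\mathtt{r}} \subset \R^d$ of diameter $\mathtt{r}$ with $\dim_{\mathrm{H}}(\mathbf{U}(\graph F_s) \cap Q_{\mathtt{r}}) = s$, and set $K \colonequals \mathbf{U}(\graph F_s) \cap Q_{\mathtt{r}}$.

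For the avoidance argument, suppose for contradiction that $\gamma = \Phi(\mathtt{t}) \in (\Gamma \setminus \{0\}) \cap (K-K)$. Then $|\gamma| \leq \diam Q_{\mathtt{r}} = \mathtt{r}$, so $|\Phi_1(\mathtt{t})| \leq \mathtt{r}$ and $\mathtt{t}$ satisfies both smallness estimates. Writing $\gamma = \mathbf{U}(t - t', F_s(t) - F_s(t'))$ with $t, t' \in [0,1]$ gives $z(\mathtt{t}) = (t - t', F_s(t) - F_s(t'))$; combining the pointwise bounds on $z_1, \underline{z}$ with the Hölder continuity of $F_s$ yields
\[\mathtt{c}_0 |\mathtt{t}|^{\mathtt{m}} \leq |\underline{z}(\mathtt{t})| = |F_s(t) - F_s(t')| \leq \|F_s\|_{C^{0,\alpha}} |z_1(\mathtt{t})|^{\alpha} \leq \|F_s\|_{C^{0,\alpha}} \mathtt{C}^{\alpha} |\mathtt{t}|^{\mathtt{N}\alpha},\]
contradicting the choice of $\mathtt{r}$ unless $\mathtt{t} = 0$. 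Hence $K$ is the desired avoiding set.

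The main obstacle is calibrating the Hölder exponent $\alpha$ so that it simultaneously satisfies the upper bound coming from Proposition \ref{high dimensional graph} and the lower bound $\alpha > \mathtt{m}/\mathtt{N}$ needed to force $\mathtt{N}\alpha - \mathtt{m} > 0$ in the contradiction; this is exactly where the double threshold $\min\{\mathtt{N}/\mathtt{m}, d - (d-1)\mathtt{m}/\mathtt{N}\}$ enters. A minor technical point is the degenerate case $\mathtt{m} = \mathtt{N}$, in which the asserted threshold collapses to $s < 1$. Here the argument above fails (one cannot separate the exponents), but the conclusion follows from a soft construction: the curve $\Gamma$ has a well-defined tangent line at $0$ (pointing along its first nonvanishing Taylor coefficient), so one may pick a hyperplane $H$ through the origin not containing this direction and place a Cantor-type set of Hausdorff dimension $s < 1$ inside $H$ intersected with a sufficiently small ball, thereby forcing $K - K \subset H$ to miss $\Gamma \setminus \{0\}$ locally.
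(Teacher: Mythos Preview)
Your argument is correct and matches the paper's approach essentially line for line. Two minor points: the paper observes (via Lemma \ref{linear transformation} and the definition of subtype) that $\mathtt N \geq \mathtt m + d - 1 > \mathtt m$ whenever $d \geq 2$, so your ``degenerate case'' $\mathtt m = \mathtt N$ can never occur and the separate hyperplane argument is unnecessary; and since Proposition \ref{high dimensional graph} is stated only for $s \in [1,d)$, the paper explicitly reduces to $s \in [1,\overline{s})$ before invoking it, a step you left implicit.
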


Taking $\Gamma$ to be a curve of subtype $\mathtt m = 1$, this result implies that the constant $\varepsilon_{\mathtt N}$ in Theorem \ref{uniform result} cannot exceed $\frac{d-1}{\mathtt N}$.

\section{Polynomial patterns of infinite type are avoidable} \label{sec5}
In this section, we prove Theorem \ref{polynomial result}, which asserts equivalence between five statements.  We will only demonstrate the equivalence of statements 1 and 2, namely that $\Gamma$ is unavoidable if and only if $\Gamma$ is of finite type at the origin.  It is straightforward to show that statements 2--5 are equivalent; this is left to the reader.  

\subsection{Proof of Theorem \ref{polynomial result}}
Fix a polynomial curve $\Gamma \subset \R^d$ that contains the origin and a polynomial function $\Phi \colon \mathtt I \rightarrow \R^d$ such that $\Gamma = \Phi(\mathtt I)$.  We may assume that $\Phi(0) = 0$.  As in the proof of Proposition \ref{generalized-graph-lemma}, one direction of the equivalence between statements 1 and 2 is already supplied by Theorem \ref{finite type result}, namely that 2 implies 1.  We therefore assume that $\Gamma$ is of infinite type at the origin and aim to show that $\Gamma$ is avoidable; this would show that 1 implies 2.  Toward that end, we note that the parametrization $\Phi$ must be of infinite type at the origin and, because $\Phi$ is a polynomial function, this is equivalent to the existence of a unit vector $\mathtt u \in \R^d$ such that $\mathtt u \cdot \Phi \equiv 0$.  Let ${\bf U} \colon \R^d \rightarrow \R^d$ be the unitary matrix that maps $\mathtt{e}_1 = (1,0,\ldots,0)$ to $\mathtt u$.  Fix $s \in [1,d)$, and let
\begin{align*}
K \colonequals {\bf U}(\graph F_s),
\end{align*}
where $\graph F_s$ is as in \eqref{counterexample}.  We claim that $(\Gamma \setminus \{0\}) \cap (K-K) = \emptyset$.  For a contradiction, suppose there exists $\gamma \in (\Gamma \setminus \{0\}) \cap (K-K)$, and let $z = {\bf U}^{-1}(\gamma)$.  Writing $\gamma = \Phi(\mathtt{t})$ for some $\mathtt{t} \in \mathtt I$, we have
\begin{align}\label{z_1 equals zero}
z_1 = \mathtt{e}_1 \cdot z = {\bf U}^{-1}\mathtt u \cdot z = \mathtt u \cdot {\bf U}z =  \mathtt u \cdot \Phi(\mathtt{t}) = 0.
\end{align}
Due to the graph structure of $K$, we also have
\begin{align}\label{z graph structure}
z = {\bf U}^{-1}(\gamma) = (t-t', F_s(t)-F_s(t'))
\end{align}
for some $t,t' \in [0,1]$.  Together, \eqref{z_1 equals zero} and \eqref{z graph structure} imply that $z = 0$ and hence $\gamma = 0$, a contradiction.  Since $\dim_\mathrm{H}K = s$ and $s \in [1,d)$ was arbitrary, we conclude that $\Gamma$ is avoidable. \qed

\subsection{Checking conditions for (un)avoidability} \label{linear independence conditions}
As noted in Section \ref{Introduction}, an unavoidable smooth curve must contain the origin.  Combining this observation with statements 4 and 5 in Theorem \ref{polynomial result}, we get a simple criterion for checking whether a given tuple of polynomials $\Phi = (\Phi_1,\ldots, \Phi_d)$ defines an unavoidable curve on a compact interval $\mathtt I$:  The image of $\Phi$ on $\mathtt I$ is unavoidable if and only if $\Phi_1,\ldots,\Phi_d$ are linearly independent and share a common zero in $\mathtt I$.  The following examples illustrate this with $d = 3$:
\begin{itemize}
\item{$\Gamma = \{(t^2-1, t^3+5t-6, 2t^3-t^2-1)\colon t \in [0,1]\}$ is unavoidable.  The parametrizing polynomials are linearly independent and vanish at $t=1$.}
\item{$\Gamma = \{(t-2, t^2-2t, t^2+t-6) \colon t \in [0,1]\}$ is avoidable.  The parametrizing polynomials are linearly dependent.}
\item{$\Gamma = \{(t+1, t^2-1, t^3+2t+1) \colon t \in [0,1]\}$ is avoidable.  The parametrizing polynomials do not share a zero.}
\end{itemize}
In general, the presence of a shared zero among polynomials can be checked by, say, using the Euclidean algorithm to compute their greatest common divisor and then using Sturm's theorem to determine whether that divisor has a real zero.

\section{Configuration integral:  Proof of Proposition \ref{existence of pattern}} \label{nonvanishing integral section}

In this section, we prove Proposition \ref{existence of pattern}.  Let $\mathscr{I}_0 \colonequals \mathscr{I}[\mu,\pi]/2 > 0$, with $\mathscr{I}[\mu,\pi]$ as in \eqref{configuration integral}.  Then there exists $\delta_0 > 0$ such that
\begin{equation} \label{positivity to integral} 
\Big\vert\int (\mu \ast \psi_\delta)\ast\pi\,d\mu\Big\vert \geq \mathscr{I}_0 \quad \text{for all } \delta \in (0,\delta_0].
\end{equation}
Since $\mu$ is a probability measure, property \eqref{positivity to integral} implies that for every $\delta \in (0, \delta_0]$, there exists a point $x_\delta \in \supp \mu$ such that
\begin{align*}
|(\mu \ast \psi_\delta) \ast \pi(x_\delta)| \geq \mathscr{I}_0. 
\end{align*}
Set $\pi_\delta \colonequals \pi \ast \psi_\delta$. Since $(\mu \ast \psi_\delta) \ast \pi = \mu \ast \pi_\delta$ by properties of convolution, we obtain
\begin{align}\label{K1K2}
\mathscr{I}_0 \leq |\mu \ast \pi_\delta(x_\delta)| &= \Big\vert\int\pi_\delta(x_\delta-y)d\mu(y)\Big\vert \nonumber \\ 
& = \int_{E_1}|\pi_\delta(x_\delta-y)|d\mu(y) + \int_{E_2}|\pi_\delta(x_\delta-y)|d\mu(y),
\end{align}
where
\begin{align}
E_1 = E_1(\delta) &\colonequals \bigl\{ y  \colon \dist(x_\delta-y, \supp \pi)>\sqrt{\delta} \bigr\}, \nonumber \\ 
E_2 = E_2(\delta)  &\colonequals \bigl\{ y  \colon \dist(x_\delta-y, \supp \pi) \leq \sqrt{\delta} \bigr\}. \nonumber 
\end{align}
We claim that the integral over $E_1$ vanishes as $\delta \rightarrow 0$.  Indeed, fix $z \colonequals x_\delta - y$ such that $\dist(z,\supp \pi) > \sqrt{\delta}$. Then for each integer $N$, there exists $C_N < \infty$ such that
\begin{align*}
|\pi_\delta(z)| \leq \delta^{-d}\int|\psi(\delta^{-1}(z-w))|d\pi(w) &\leq C_N \delta^{-d}\int(\delta^{-1}|z-w|)^{-N}d\pi(w) \leq C_N \|\pi\| \delta^{\frac{N}{2} - d};
\end{align*}
here, we have used the rapid decay of $\psi$.  Noting that $\|\pi\| < \infty$ and taking $N > 2(d+1)$ and $\delta$ sufficiently small, we get that $|\pi_\delta(z)| \leq \delta$.  This pointwise bound on the integrand means that
\begin{equation} \label{K1-estimate} 
\int_{E_1}|\pi_\delta(x_\delta-y)|d\mu(y) \leq \delta.
\end{equation}
Now, assuming $\delta < \mathscr{I}_0/2$ and inserting \eqref{K1-estimate} into \eqref{K1K2}, we find that 
\begin{align*} 
\int_{E_2}|\pi_\delta(x_\delta-y)|d\mu(y) \geq \frac{\mathscr{I}_0}{2} > 0.
\end{align*}
Hence, for each sufficiently small $\delta$, there exists $y_\delta \in \supp \mu$ such that
\begin{align}\label{distance from support of pi}
\dist(x_\delta - y_\delta, \supp \pi) \leq \sqrt{\delta}.
\end{align}
Since $\supp(\mu) \times \supp(\mu)$ is a compact set in $\R^{2d}$, there exists a sequence of values of $\delta$ along which $(x_\delta,y_\delta)$ converges to a point $(x,y) \in \supp(\mu) \times \supp(\mu)$.  By \eqref{distance from support of pi}, we must have $x-y \in \supp \pi$, and the conclusion of the proposition follows. \qed

\section{Energy and spectral gap:  Proof of Proposition \ref{spectral gap lemma}} \label{energy and spectral gap section}

The goal of this section is to prove Proposition \ref{spectral gap lemma}. Essential to this proof is a deeper understanding of the behaviour of the measures $\pi = \pi[\Phi; j, c]$ defined for functions $\Phi\colon [0,1] \rightarrow \mathbb R^d$ in standard form, with special attention to their dependence on the accompanying parameters $j$ and $c$. We are specifically interested in the growth rate of the mass assigned by $\pi$ to Euclidean balls and the decay of its Fourier transform $\widehat{\pi}$. We collect the main tools in the first three subsections. Using these, the proof of  Proposition \ref{spectral gap lemma} is completed in Subsection \ref{spectral gap lemma proof}. 

\subsection{Choice of constants} \label{CN J0}
Let $\Phi \colon [0,1] \rightarrow \R^d$ be a smooth function in standard form that is vanishing of type $\mathtt N$ at the origin. The discussion in Subsection \ref{spectral gap proposition section} leading up to \eqref{c1} and \eqref{c2} identifies two constants $\mathtt K_{\mathtt N} \colonequals 2\mathtt N!$ and $\mathtt J_0 = \mathtt{J}_0(\Phi)$, only the former being admissible. These two constants are important for our subsequent analysis: the admissible constant $\mathtt{L}_{\mathtt N}$ and the inadmissible constant $\mathtt J$ appearing in Proposition \ref{spectral gap lemma} will depend respectively on $\mathtt K_{\mathtt N}$ and $\mathtt J_0$. In the remainder of this section, $\mathtt{L}_{\mathtt N}$ will always denote an admissible constant and $\mathtt J$ an inadmissible one, although their exact values may change from one occurrence to another. In particular, $\mathtt{L}_{\mathtt N}$ will always be a large multiple of $\mathtt K_{\mathtt N}$. The multiplicative factor may depend on $d$, $\mathtt N$, and the Schwartz function $\psi$ introduced in \eqref{def psi} in order to define $\mu_{\delta}$.

\subsection{Ball condition for $\pi$}
\begin{lemma} \label{pi ball lemma}
Fix any $\mathtt{L}_{\mathtt N} \geq 2d\mathtt{K}_\mathtt{N}$, where $\mathtt K_{\mathtt N}$ is the constant defined in Subsection \ref{CN J0}.  Then for every $\Phi$ in standard form (of type $\mathtt N$ at the origin), we have
\begin{align}\label{ball lower bound}
\inf \Bigl\{ \pi(B(0;r)) \colon \pi = \pi[\Phi; j, c],~j \geq \mathtt J_0(\Phi), ~c \in \Big(0, \frac{r}{\mathtt{L}_{\mathtt N}}\Big] \Bigr\} \geq \frac{r}{\mathtt{L}_{\mathtt N}} \quad \text{for every $r \in (0,1]$.}
\end{align}
\end{lemma}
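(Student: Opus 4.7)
The goal is to lower-bound the Lebesgue measure of the set of $s \in [c,1]$ for which $\Phi^j(s)$ lies in $B(0;r)$, and my plan is to use only the zeroth-order instance of the pointwise bound \eqref{c1} combined with the rescaling identity for $\Phi^j$.

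First, I would unfold the definition \eqref{measure definition}: since $\pi=\pi[\Phi;j,c]$ is the pushforward of Lebesgue measure on $[c,1]$ under $\Phi^j$,
\[
\pi(B(0;r)) \;=\; \bigl|\{s\in[c,1]\colon |\Phi^j(s)|\leq r\}\bigr|.
\]
Writing $\Phi_i(t)=t^{\mathtt n_i}\phi_i(t)$ from \eqref{standard form} and applying \eqref{c1} with $\ell=0$ (which is allowed since $\mathtt n_i\geq 1$), we have $|\Phi_i(t)|\leq \mathtt K_{\mathtt N} t^{\mathtt n_i}$ for every $t\in[0,2^{-\mathtt J_0}]$, where $\mathtt K_{\mathtt N}=2\mathtt N!$. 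For $j\geq \mathtt J_0(\Phi)$ and $s\in[0,1]$ we have $2^{-j}s\in[0,2^{-\mathtt J_0}]$, so the rescaling \eqref{rescaled Phi} yields
\[
|\Phi^j_i(s)|\;=\;2^{\mathtt n_i j}\,|\Phi_i(2^{-j}s)|\;\leq\;2^{\mathtt n_i j}\,\mathtt K_{\mathtt N}\,(2^{-j}s)^{\mathtt n_i}\;=\;\mathtt K_{\mathtt N}\,s^{\mathtt n_i}.
\]

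Next, using $s^{\mathtt n_i}\leq s$ (since $s\in[0,1]$ and $\mathtt n_i\geq 1$), I would collect the $d$ coordinate bounds to get $|\Phi^j(s)|\leq \sqrt d\,\mathtt K_{\mathtt N}\,s$ for all $s\in[0,1]$ and all $j\geq \mathtt J_0(\Phi)$. Consequently,
\[
\{s\in[c,1]\colon |\Phi^j(s)|\leq r\}\;\supseteq\;\bigl[c,\;\min(1,\,r/(\sqrt d\,\mathtt K_{\mathtt N}))\bigr].
\]

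Finally, I would verify that the hypothesis $\mathtt L_{\mathtt N}\geq 2d\mathtt K_{\mathtt N}$ makes this interval nondegenerate of the required length. Since $r\leq 1$ and $\mathtt K_{\mathtt N}\geq 2$, the minimum equals $r/(\sqrt d\,\mathtt K_{\mathtt N})$. The hypothesis $c\leq r/\mathtt L_{\mathtt N}$ together with $\mathtt L_{\mathtt N}\geq 2d\mathtt K_{\mathtt N}\geq 2\sqrt d\,\mathtt K_{\mathtt N}$ gives $c\leq r/(2\sqrt d\,\mathtt K_{\mathtt N})$, so the length of the interval is at least
\[
\frac{r}{\sqrt d\,\mathtt K_{\mathtt N}}-\frac{r}{2\sqrt d\,\mathtt K_{\mathtt N}}\;=\;\frac{r}{2\sqrt d\,\mathtt K_{\mathtt N}}\;\geq\;\frac{r}{\mathtt L_{\mathtt N}},
\]
using once more $\mathtt L_{\mathtt N}\geq 2\sqrt d\,\mathtt K_{\mathtt N}$. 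This yields \eqref{ball lower bound}.

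There is no genuine obstacle here: the argument is just an unfolding of the definition of the pushforward measure plus the crude $\ell=0$ case of the standard-form bounds, and the only care needed is to confirm that the admissibility requirement $\mathtt L_{\mathtt N}\geq 2d\mathtt K_{\mathtt N}$ is strong enough simultaneously to push $c$ below half the upper endpoint and to swallow the factor $\sqrt d\,\mathtt K_{\mathtt N}$ coming from the coordinatewise estimate. Since the bound is uniform in $j\geq \mathtt J_0(\Phi)$ and in $c\leq r/\mathtt L_{\mathtt N}$, taking infimum over the displayed set yields the claim.
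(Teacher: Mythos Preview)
Your proof is correct and follows essentially the same approach as the paper: both unfold the pushforward, apply the $\ell=0$ case of \eqref{c1} to bound $|\Phi^j(s)|$ linearly in $s$, and then measure the resulting interval. The only cosmetic difference is that the paper bounds the Euclidean norm by the $\ell^1$ norm to get $|\Phi^j(s)|\leq d\,\mathtt K_{\mathtt N}\,s=(\mathtt L_{\mathtt N}/2)s$, whereas you use the $\ell^\infty$ comparison to get the slightly sharper $\sqrt{d}\,\mathtt K_{\mathtt N}\,s$; either constant is absorbed by the hypothesis $\mathtt L_{\mathtt N}\geq 2d\mathtt K_{\mathtt N}$.
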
  
\begin{proof} 
The condition \eqref{c1} with $\ell=0$ gives for all $j \geq \mathtt J_0$ and $s \in [0, 1]$ the bound 
\begin{align}\label{Phi linear} 
|\Phi^j(s)| \leq \sum_{i=1}^{d} 2^{\mathtt n_ij} |\Phi_i(2^{-j}s) | \leq \sum_{i=1}^{d} 2^{\mathtt n_ij}  \mathtt K_{\mathtt N} (2^{-j}s)^{\mathtt n_i} \leq \mathtt K_{\mathtt N} \sum_{i=1}^{d} s^{\mathtt n_i} \leq \mathtt K_{\mathtt N} d s \leq \frac{\mathtt{L}_{\mathtt N}}{2}s.
\end{align} 
The fourth inequality in the display above uses the relation $\mathtt n_i \geq 1$, a consequence of \eqref{ordering}. This upper bound allows us to estimate $\pi(B(0;r))$ as follows. For $r \in (0,1]$ and $c \in (0,r/\mathtt{L}_{\mathtt N}]$, we have
\begin{align*}
\pi (B(0;r)) = \int_{c}^{1} \mathbf 1_{B(0;r)}(\Phi^j(s)) ds  &= | \{s \in [c,1] \colon |\Phi^j(s)| \leq r \}|\\
&\geq \Big\vert\Big\{s \in [c,1] \colon \frac{\mathtt{L}_{\mathtt N}}{2}s \leq r\Big\}\Big\vert  \quad \text{(from \eqref{Phi linear})}\\
&\geq \frac{2r}{\mathtt{L}_{\mathtt N}} - c \geq \frac{r}{\mathtt{L}_{\mathtt N}},
\end{align*}
where $|E|$ denotes the Lebesgue measure of $E$. This gives the desired conclusion \eqref{ball lower bound}.
\end{proof}

The ball condition on $\pi$ permits size estimates on certain convolutions of measures involving $\pi$. The following corollary in particular will be helpful in proving Proposition \ref{spectral gap lemma}, since it offers a pointwise bound on the integrand of the configuration integral $\mathscr{I}$.  Let us recall the definition $\mu_h = \mu \ast \psi_{h}$ from \eqref{def-mu-psi-delta} with $\psi$ as in \eqref{def psi}.
\begin{corollary} \label{convolution estimate corollary} 
Let $\mathtt a > 0$ be an absolute constant.  There exists an admissible constant $\mathtt{L}_{\mathtt N} > 0$, depending only on $\mathtt N$, $\mathtt{a}$, and the auxiliary function $\psi$ chosen in \eqref{def psi}, with the following property:  Let
$\mu$ be a Borel measure on $\mathbb R^d$ and $x_0 \in \mathbb R^d$ a point such that
\begin{equation} \label{lower bound ball pi}
\inf_{r \in (0,1]} \frac{\mu(B(x_0;r))}{r^{d}} \geq \mathtt a.
\end{equation}  
Then for every $\Phi$ in standard form (of type $\mathtt N$ at the origin), we have
\begin{equation}  \mu_{h} \ast \pi (x_0) \geq \mathtt{L}_{\mathtt N}^{-1} h \quad \text{for all $\pi = \pi[\Phi; j, h^2]$ with $j \geq \mathtt J_0$ and $h \in (0, \mathtt{L}_{\mathtt N}^{-1}]$}. \label{convolution lower bound} \end{equation}     
\end{corollary}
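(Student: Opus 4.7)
The plan is to exploit the positivity of the Gaussian $\psi$ to reduce the convolution $\mu_h \ast \pi(x_0)$ to a product of two geometric quantities: the $\mu$-mass of a small ball near $x_0$, controlled by hypothesis \eqref{lower bound ball pi}, and the $\pi$-mass of a small ball near the origin, controlled by Lemma \ref{pi ball lemma}. Concretely, I would expand $\mu_h \ast \pi(x_0) = \int \mu_h(x_0 - y)\,d\pi(y)$ and discard the integration outside $y \in B(0; h/2)$, which costs nothing since $\psi \geq 0$.

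For $y \in B(0; h/2)$, a triangle-inequality argument gives $B(x_0; h/2) \subseteq B(x_0-y; h)$. Since $\psi(x) = e^{-\pi|x|^2} \geq e^{-\pi}$ on the unit ball, hence $\psi_h(z) \geq h^{-d} e^{-\pi}$ for $|z| \leq h$, it follows that
\[
\mu_h(x_0-y) \geq h^{-d} e^{-\pi}\, \mu\bigl(B(x_0-y;h)\bigr) \geq h^{-d} e^{-\pi}\, \mu\bigl(B(x_0; h/2)\bigr) \geq e^{-\pi} 2^{-d} \mathtt{a},
\]
uniformly in such $y$, where the last inequality uses \eqref{lower bound ball pi} at radius $h/2 \leq 1$. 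Integrating this pointwise bound over $B(0; h/2)$ against $\pi$ and then invoking Lemma \ref{pi ball lemma} with $r = h/2$ and $c = h^2$ gives $\pi(B(0; h/2)) \geq h/(2\mathtt{L}_{\mathtt{N}})$, provided the compatibility condition $h^2 \leq (h/2)/\mathtt{L}_{\mathtt{N}}$, i.e.\ $h \leq 1/(2\mathtt{L}_{\mathtt{N}})$, is satisfied.

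Combining the two lower bounds yields $\mu_h \ast \pi(x_0) \geq (e^{-\pi} \mathtt{a}/(2^{d+1} \mathtt{L}_{\mathtt{N}}))\, h$. Relabeling by a new admissible constant $\mathtt{L}_{\mathtt{N}}' > 0$ depending on $d$, $\mathtt{N}$, $\mathtt{a}$, and $\psi$, chosen large enough to absorb the multiplicative prefactor and to ensure that $h \leq (\mathtt{L}_{\mathtt{N}}')^{-1}$ implies $h \leq 1/(2\mathtt{L}_{\mathtt{N}})$, produces the claimed bound \eqref{convolution lower bound}. I do not anticipate any genuine obstacle; the only mild point of care is the parameter matching, namely that the truncation $c = h^2$ built into $\pi = \pi[\Phi; j, h^2]$ is precisely what keeps Lemma \ref{pi ball lemma} applicable at radius $r = h/2$ up to an admissible constant, and this is what allows a lower bound \emph{linear} (rather than quadratic) in $h$ to survive in \eqref{convolution lower bound}.
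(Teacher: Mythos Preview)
Your proof is correct and follows essentially the same approach as the paper: both arguments restrict to a small ball near the origin in the $\pi$-integral, use a uniform lower bound on $\psi$ near zero to convert $\mu_h(x_0-y)$ into a ball-mass estimate controlled by \eqref{lower bound ball pi}, and then invoke Lemma \ref{pi ball lemma} at radius $\sim h$ with truncation $c = h^2$. The only cosmetic difference is that the paper introduces an abstract radius $\mathtt{b}$ with $\psi \geq 1/2$ on $B(0;\mathtt{b})$, whereas you use the explicit bound $\psi \geq e^{-\pi}$ on the unit ball; the resulting constants differ but the argument is the same.
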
 
\begin{proof}
Fix a Borel measure $\mu$ and a point $x_0$ satisfying \eqref{lower bound ball pi}.  According to \eqref{conditions-psi}, the function $\psi$ satisfies $\psi(0) = 1$. Therefore, there exists an absolute constant $\mathtt{b} > 0$ such that $\psi(x) \geq 1/2$ for $|x| \leq \mathtt{b}$.  Since $\psi$ is nonnegative, it follows that
\[ \mu_{h}(y) = h^{-d} \int \psi(h^{-1}(y-z))d\mu(z) \geq \frac{1}{2h^d} \mu(B(y; \mathtt{b}h)). \] 
If $y$ is any point such that $|x_0-y| \leq \mathtt{b}h/2$, then $B(x_0; \mathtt{b}h/2) \subseteq B(y;\mathtt{b}h)$. Inserting this inclusion into the estimate above yields 
\begin{align*}
 \mu_{h}(y)  \geq \frac{1}{2h^d}\mu(B(x_0; \mathtt{b}h/2)) \geq \mathtt c \quad \text{for all } y \in B(x_0; \mathtt{b}h/2), 
\end{align*}
where $\mathtt c \colonequals \mathtt a \mathtt{b}^d 2^{-d-1} > 0$. Assume $h > 0$ is small enough that $h^2 \leq (\mathtt{b}h/2)/(2d \mathtt K_{\mathtt N})$, i.e.~$h \leq \mathtt{b}/(4d \mathtt K_{\mathtt N})$. Then Lemma \ref{pi ball lemma} applies with the quantities $c$, $r$, and $\mathtt{L}_{\mathtt N}$ there being replaced by $h^2$, $\mathtt{b}h/2$, and $2d\mathtt K_{\mathtt N}$ respectively.  If $\pi$ is of the form $\pi = \pi[\Phi; j, h^2]$, it follows  from \eqref{ball lower bound} that
\begin{align*}
\mu_{h} \ast \pi(x_0) = \int\mu_{h}(x_0-y)d\pi(y) &\geq \int_{B(0; \mathtt{b}h/2)}\mu_{h}(x_0-y)d\pi(y) \\ 
&\geq \mathtt c \pi(B(0;{\mathtt{b}h}/{2})) \geq  \frac{\mathtt{b} \mathtt c}{4d \mathtt K_{\mathtt N}}h \geq \mathtt{L}_{\mathtt N}^{-1} h,  
\end{align*}
provided $\mathtt{L}_{\mathtt N} \geq 4d \mathtt K_{\mathtt N}/(\mathtt{b} \mathtt c)$.  Assuming also that $\mathtt{L}_{\mathtt N} \geq 4d \mathtt K_{\mathtt N}/\mathtt{b}$, the above bound then holds for any $h \in (0, \mathtt{L}_{\mathtt{N}}^{-1}]$.  This establishes \eqref{convolution lower bound}. 
\end{proof}

\subsection{A uniform method of stationary phase}
Our next task is to study the behaviour of $\widehat{\pi}(\xi)$.  This information is given in Lemma \ref{measure bounds}, below, which we will prove using basic stationary phase techniques.  The following elementary lemma will simplify the argument. 
\begin{lemma}\label{splitting lemma}
Fix any constant $\mathtt{a} \in (0,1]$ and any collection $\{x_1,\ldots,x_n\}$ of nonnegative real numbers, not all zero.  Then there exists $k \in \{1,\ldots, n\}$ such that $x_k \neq 0$ and 
\[ \frac{x_i}{x_k} \leq \mathtt{a}^{-n} \quad \text{if } 1 \leq i \leq k, \quad\quad \frac{x_i}{x_k} \leq \mathtt{a}   \quad \text{if }  k < i \leq n. \]
\end{lemma}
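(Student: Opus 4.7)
The plan is to define the index $k$ through a single extremal condition that couples both required inequalities. I would let $k$ be any index in $\{1,\ldots,n\}$ that maximizes the weighted quantity $x_k \mathtt{a}^{-k}$. Since the collection is not identically zero, this maximum is strictly positive, which automatically forces $x_k > 0$ and so $x_k \neq 0$ as required.

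The maximality of $k$ then yields the single inequality $x_i \mathtt{a}^{-i} \leq x_k \mathtt{a}^{-k}$ for every $i$, equivalently $x_i/x_k \leq \mathtt{a}^{i-k}$. Splitting into two cases using $\mathtt{a} \in (0,1]$ should finish the proof. For $i \leq k$ one has $i-k \geq -(n-1)$, hence $\mathtt{a}^{i-k} \leq \mathtt{a}^{-(n-1)} \leq \mathtt{a}^{-n}$, which gives the first required bound (and is vacuous in the case $i=k$, which just says $1 \leq \mathtt{a}^{-n}$). For $i > k$ one has $i-k \geq 1$, hence $\mathtt{a}^{i-k} \leq \mathtt{a}$, which gives the second.

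There is no serious technical obstacle — the only point that requires care is the choice of weight. Unweighted maximization of $x_k$ would fail on both sides: on the tail $i > k$ one would only obtain $x_i/x_k \leq 1$ rather than a factor of $\mathtt{a}$, and on $i \leq k$ one would get no information at all. The weight $\mathtt{a}^{-k}$ is calibrated so that the single maximality inequality, read in opposite directions for $i < k$ and for $i > k$, produces the two desired estimates simultaneously; a telescoping interpretation is that among the values $x_k$, the one whose ``geometric surplus'' $x_k \mathtt{a}^{-k}$ is largest must dominate the later entries by at least a factor $\mathtt{a}^{-1}$ each step, while lagging behind the earlier entries by at most that same factor per step, for a total of $n-1$ steps.
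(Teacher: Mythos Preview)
Your proof is correct and takes a genuinely different route from the paper. The paper proceeds by induction on $n$: it applies the induction hypothesis to $\{x_1,\ldots,x_{n-1}\}$ to obtain a candidate index $k_0$, and then either keeps $k_0$ (if $x_n/x_{k_0} \leq \mathtt{a}$) or replaces it by $n$ (otherwise), chaining the ratios to pick up one extra factor of $\mathtt{a}^{-1}$. Your argument is direct: by maximizing the weighted quantity $x_j\mathtt{a}^{-j}$ you obtain the single inequality $x_i/x_k \leq \mathtt{a}^{i-k}$, from which both conclusions follow immediately by reading the exponent in the two ranges $i\leq k$ and $i>k$. Your approach is shorter, avoids the inductive bookkeeping, and in fact yields the slightly sharper bound $\mathtt{a}^{-(n-1)}$ on the first inequality; the paper's induction has the minor advantage of making transparent why an extra factor of $\mathtt{a}^{-1}$ accrues at each step, but your ``geometric surplus'' remark captures the same intuition in one line.
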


\begin{proof}
We will induct on $n$.  The base case $n = 1$ is trivial.  Assume that $n \geq 2$ and that the lemma holds with $n-1$ in place of $n$.  Applying the induction hypothesis on $\{x_1, \ldots x_{n-1} \}$, let $k_0 \in \{1,\ldots, n-1\}$ be an index such that $x_{k_0} \neq 0$ and $x_i/x_{k_0} \leq \mathtt{a}^{-n+1}$ if $1 \leq i \leq k_0$ and $x_i/x_{k_0} \leq \mathtt{a}$ if $k_0 < i \leq n-1$.  If $x_n/x_{k_0} \leq \mathtt{a}$, then the conclusion of the lemma holds with $k = k_0$.  Assume that $x_n/x_{k_0} \geq \mathtt{a}$.  Then for $i < n$ we have
\begin{align*}
\frac{x_i}{x_n} = \frac{x_i}{x_{k_0}}\cdot\frac{x_{k_0}}{x_n} \leq \mathtt{a}^{-n+1}\mathtt{a}^{-1} = \mathtt{a}^{-n},
\end{align*}
and the conclusion of the lemma holds with $k = n$.
\end{proof}

\begin{lemma}\label{measure bounds}
There exists an admissible constant $\mathtt{L}_{\mathtt N}$ with the following property: For every function $\Phi\colon [0,1] \rightarrow \mathbb R^d$ in standard form that is vanishing of type $\mathtt N$ at the origin, there exists an (inadmissible) index $\mathtt J \geq \mathtt J_0(\Phi)$ depending on $\Phi$ such that  
\begin{align}\label{Fourier decay bound}
\sup \{|\widehat{\pi}(\xi)| \colon \pi = \pi[\Phi; j, c], \, j \geq \mathtt J, c \in (0,1]\} \leq \mathtt{L}_{\mathtt N} (1+|\xi|)^{-{1/\mathtt N}} \quad \text{for all } \xi \in \mathbb R^d.
\end{align}
\end{lemma}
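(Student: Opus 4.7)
The plan is to estimate
\[
\widehat{\pi}(\xi) = \int_c^1 e^{-2\pi i\xi \cdot \Phi^j(s)}\, ds
\]
by reading off the oscillation of the phase $\varphi(s) \colonequals -2\pi\xi\cdot\Phi^j(s)$ and applying van der Corput's lemma. For $|\xi|\leq 1$ we just use the trivial bound $|\widehat{\pi}(\xi)| \leq \|\pi\|\leq 1$, so the real work is for $|\xi|\geq 1$. Writing everything in terms of $s$ after rescaling, the key computation is
\[
\frac{d^\ell}{ds^\ell}\Phi_i^j(s) = 2^{(\mathtt n_i - \ell)j}\, \Phi_i^{(\ell)}(2^{-j}s),
\]
which, combined with \eqref{c1} and \eqref{c2}, means: for $\ell<\mathtt n_i$ the derivative is $O(s^{\mathtt n_i-\ell})$; for $\ell=\mathtt n_i$ it is bounded between $1/2$ and $2\mathtt N!$; and for $\ell>\mathtt n_i$ it is bounded by $C_i(\Phi)\cdot 2^{(\mathtt n_i-\ell)j}$ with an \emph{inadmissible} constant $C_i(\Phi)$ coming from the higher derivatives of $\phi_i$.

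The next step is to pick, for each $\xi$, the right derivative order to differentiate in. Apply Lemma~\ref{splitting lemma} to the $d$-tuple $(|\xi_1|,\ldots,|\xi_d|)$ with an admissible small constant $\mathtt a = \mathtt a(d,\mathtt N)$ (to be fixed) to obtain an index $k$ such that $|\xi_k|>0$, $|\xi_i|/|\xi_k|\leq \mathtt a^{-d}$ for $i\leq k$, and $|\xi_i|/|\xi_k|\leq \mathtt a$ for $i>k$; in particular $|\xi_k|\geq \mathtt a^d|\xi|/\sqrt{d}$. Now differentiate $\varphi$ exactly $\mathtt n_k$ times and split by index $i$. The $i=k$ term has magnitude $\geq \pi |\xi_k|$ thanks to \eqref{c2}. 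For $i>k$ (where $\mathtt n_i>\mathtt n_k$), \eqref{c1} gives a pointwise bound $O(s^{\mathtt n_i-\mathtt n_k})|\xi_i| \leq \mathtt a|\xi_k|\cdot 2\mathtt N!$, so the total contribution is bounded by $(d-k)\mathtt a\cdot 2\mathtt N!\cdot 2\pi|\xi_k|$; choose $\mathtt a$ small enough (only $d,\mathtt N$ involved) so this is at most $\pi|\xi_k|/4$. For $i<k$ (where $\mathtt n_i<\mathtt n_k$), each term is bounded by $C_i(\Phi)\cdot 2^{(\mathtt n_i-\mathtt n_k)j}|\xi_i|\leq \mathtt a^{-d}C_i(\Phi)\cdot 2^{(\mathtt n_i-\mathtt n_k)j}|\xi_k|$, which tends to $0$ as $j\to\infty$. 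Thus, for $j\geq \mathtt J(\Phi)$ chosen large enough (depending on $C_i(\Phi)$, hence inadmissible), the $i<k$ contribution is also $\leq \pi|\xi_k|/4$, leaving the uniform bound
\[
|\varphi^{(\mathtt n_k)}(s)|\geq \tfrac{\pi}{2}|\xi_k| \gtrsim |\xi|\quad\text{for all } s\in[0,1].
\]

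With this lower bound in hand, van der Corput's lemma yields $|\widehat{\pi}(\xi)|\leq c_{\mathtt n_k}|\xi|^{-1/\mathtt n_k}\leq c_{\mathtt N}|\xi|^{-1/\mathtt N}$ whenever $\mathtt n_k\geq 2$, which is the case $k\geq 2$ (or $k=1$ with $\mathtt n_1\geq 2$). The absorbing constant is admissible once $\mathtt a$ and the ratio $|\xi|/|\xi_k|$ are. The one subtle case is $\mathtt n_k=1$ (i.e.\ $k=1$ and $\mathtt n_1=1$), where van der Corput requires monotonicity of $\varphi'$. Since $|\varphi'|\geq \pi|\xi_1|/2>0$, the function $\varphi'$ has constant sign; to bypass the monotonicity hypothesis we integrate by parts and bound the remainder via $|\varphi''|$. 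A term-by-term analysis analogous to the one above (now with derivative order $\ell=2$) shows $\sup_s|\varphi''(s)|\leq C(|\xi_1|\cdot 2^{-j}\cdot C_1'(\Phi) + (d-1)\mathtt a\cdot 2\mathtt N!\cdot |\xi_1|)$, which for $j$ large and $\mathtt a$ small is $\leq C|\xi_1|$ with $C$ admissible. Integration by parts then gives $|\widehat{\pi}(\xi)|\lesssim |\xi_1|^{-1}\lesssim |\xi|^{-1}\leq |\xi|^{-1/\mathtt N}$ on $|\xi|\geq 1$, completing the proof.

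The main obstacle is the splitting of contributions: one must simultaneously (i) make the $i>k$ terms small by choosing $\mathtt a$ admissibly small, and (ii) make the $i<k$ terms small by choosing $j\geq \mathtt J(\Phi)$ large, while keeping the $i=k$ term dominant. The algebraic book-keeping is clean because the assumptions \eqref{ordering}--\eqref{coefficients} on standard form decouple these three regimes, and because Lemma~\ref{splitting lemma} provides exactly the asymmetric ratio bounds one needs.
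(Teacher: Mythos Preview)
Your proposal is correct and follows essentially the same route as the paper's proof: both use the splitting lemma to select an index $k$, establish a uniform lower bound on the $\mathtt n_k$-th derivative of the phase by separating the $i=k$, $i>k$, and $i<k$ contributions (the first controlled by \eqref{c2}, the second by the admissible choice of $\mathtt a$, the third by taking $j\geq \mathtt J(\Phi)$ large), and then apply van der Corput for $\mathtt n_k\geq 2$ and direct integration by parts for $\mathtt n_k=1$. The only cosmetic difference is that the paper normalizes the phase by $\xi_k^{-1}$ so that the derivative lower bound becomes an absolute constant, whereas you keep the factor $|\xi_k|$ in the bound; the two formulations are equivalent.
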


\begin{proof}
Fix $\pi$ of the form $\pi = \pi[\Phi; j, c]$ and $\xi \in \R^d \setminus \{0\}$. It follows from \eqref{measure definition} that  
\begin{equation}  \label{pi hat}
\widehat{\pi}(\xi) = \int e(x \cdot \xi) d\pi(x) = \int_{c}^{1} e(\xi \cdot \Phi^j(s)) ds,  \quad \text{where } e(t) \colonequals e^{-2\pi i t}.
\end{equation} 
The integral representing $\widehat{\pi}(\xi)$ is a scalar oscillatory integral widely studied in harmonic analysis. Our goal is to apply the well-known method of stationary phase for oscillatory integrals (see \cite[Chapter \textrm{VIII}]{Stein-HA}) to arrive at the desired bound \eqref{Fourier decay bound}. It is important to keep track of the implicit constants in this process to ensure uniformity in the parameters $j$ and $c$; we describe the steps below.

Let us choose, in the following order, a small constant $\mathtt a \in (0,1]$ depending on $\mathtt K_{\mathtt N}$, and a large integer $\mathtt J \geq \mathtt J_0$ depending on $\mathtt a$ and $\Phi$, according to the constraints
 \begin{equation} \label{a J}
 d \mathtt K_{\mathtt N} \mathtt a \leq \frac{1}{8}, \qquad d \mathtt{a}^{-d}\|\Phi\|_{C^{\mathtt N}} 2^{-\mathtt J} \leq \frac{1}{8}.
 \end{equation}  
Here, $\|\cdot\|_{C^n}$ refers to the standard norm on the space of $n$-times continuously differentiable functions $f \colon [0,1] \rightarrow \R^d$, namely
\begin{align*}
\| f \|_{C^n} \colonequals \sum_{\ell=0}^n \sup_{t \in [0,1]} |f^{(\ell)}(t)|.
\end{align*}
We assume now that $j \geq \mathtt J$. By Lemma \ref{splitting lemma}, there exists an index $k \in \{1,\ldots, d\}$ depending on $\mathtt a$ and $\xi$ such that 
\begin{equation} \label{def-k}
\xi_{k} \neq 0, \quad\quad \frac{|\xi_i|}{|\xi_{k}|} \leq \mathtt a^{-d} \quad \text{if $1 \leq i \leq k$}, \quad\quad \frac{|\xi_i|}{|\xi_{k}|} \leq \mathtt a \quad \text{if $k < i \leq d$.} 
\end{equation} 
Let us define a function $\varphi = \varphi_{k}$ by the formula 
\begin{align} \label{def-phikj}
\varphi(s) \colonequals \frac{\xi}{\xi_k}\cdot\Phi^j(s) &= \xi_k^{-1} \sum_{i=1}^{d} 2^{\mathtt n_ij} \xi_i \Phi_i(2^{-j}s) \quad \text{for } s \in [c,1],
\end{align}
so that  \eqref{pi hat} reduces to
\begin{align}
\widehat{\pi}(\xi) &= \int_{c}^{1} e(\xi_k \varphi(s))ds. \label{FT rewritten} 
\end{align} 
We claim that the $\mathtt n_k^{\text{th}}$ order derivative of the phase function $\varphi$ in the oscillatory integral \eqref{FT rewritten} is bounded from below by an absolute positive constant, due to our choice of $\mathtt a$ and $\mathtt J$ in \eqref{a J}. To verify this, we first note that
\begin{align*}
 \varphi^{(\mathtt n_k)}(s) &= \xi_k^{-1} \sum_{i=1}^{d} 2^{(\mathtt n_i - \mathtt n_k)j} \xi_i \Phi_{i}^{(\mathtt n_k)}(2^{-j}s) = \mathrm{I} + \mathrm{II} + \mathrm{III}, 
 \end{align*}
 where
 \begin{align*}
 \mathrm{I} \colonequals  \Phi_{k}^{(\mathtt n_{k})}(2^{-j}s),   \quad
\mathrm{II} &\colonequals  \sum_{i=1}^{k-1}\frac{\xi_i}{\xi_k}2^{(\mathtt n_i-\mathtt n_{k})j}\Phi_i^{(\mathtt n_{k})}(2^{-j}s), \quad \mathrm{III} \colonequals  \sum_{i=k+1}^{d}\frac{\xi_i}{\xi_k}2^{(\mathtt n_i-\mathtt n_{k})j}\Phi_i^{(\mathtt n_{k})}(2^{-j}s).
\end{align*}
Here, any empty sum is treated as zero.  Noting that $2^{-j}s \in (0,2^{-\mathtt{J}_0}]$, we use the left inequality in \eqref{c2} to bound \textrm{I} from below, obtaining $|\mathrm{I}| \geq \frac{1}{2}$. The strict monotonicity \eqref{ordering} of the exponents $\mathtt n_i$, the choice \eqref{def-k} of the index $k$, and the condition \eqref{c1} can be used to estimate \textrm{II} and \textrm{III} from above. A combination of these properties yields  
\begin{align*} |\mathrm{II}| &\leq \sum_{i=1}^{k-1} \mathtt a^{-d} 2^{-j} |\Phi_i^{(\mathtt n_k)}(2^{-j}s)| \leq  d\mathtt a^{-d} 2^{-\mathtt J} \|\Phi\|_{C^\mathtt{N}} \leq \frac{1}{8}, \\ 
|\mathrm{III}| &\leq \sum_{i=k+1}^{d} \mathtt a 2^{(\mathtt n_i-\mathtt n_{k})j} \mathtt K_{\mathtt N} (2^{-j}s )^{(\mathtt n_i-\mathtt n_k)} \leq d \mathtt a \mathtt K_{\mathtt N} \leq \frac{1}{8},
\end{align*} 
where the last step in both inequalities follows from \eqref{a J}.  As a result, we obtain
\begin{equation} \label{stationary phase prep} |\varphi^{(\mathtt n_k)}(s)| \geq |\mathrm{I}| - |\mathrm{II}| - |\mathrm{III}| \geq \frac{1}{2} - \frac{1}{8} - \frac{1}{8} = \frac{1}{4} \quad \text{for all } s \in [c, 1].
\end{equation}  
This lower bound is critical to the application of the method of stationary phase, which proceeds via two cases.  

{\em{Case 1:}}  First suppose that $\mathtt n_k \geq 2$. Using van der Corput's lemma (see \cite[Ch.~\textrm{VIII}, \S 1.2, Proposition 2]{Stein-HA}), one can find an absolute (hence admissible) constant $c_{\mathtt n_k} \geq 1$ such that 
\begin{align}\label{case 1 initial est}
|\widehat{\pi}(\xi)| = \Big\vert\int_{c}^1 e^{-2\pi i \xi_k\varphi(s)}ds\Big\vert \leq c_{\mathtt n_k}|\xi_k|^{-1/\mathtt{n}_k}. 
\end{align}
Property \eqref{def-k} implies that
\begin{align*}
|\xi| \leq |\xi_1| + \cdots + |\xi_d| \leq (k\mathtt{a}^{-d} + (d-k)\mathtt a)|\xi_k| \leq d\mathtt{a}^{-d}|\xi_k|.
\end{align*}
Inserting this into \eqref{case 1 initial est} and considering also the trivial estimate $|\widehat{\pi}(\xi)| \leq \|\pi\| \leq 1$, we find that
\begin{align}\label{case1}
|\widehat{\pi}(\xi)| \leq \min\Big\{1, c_{\mathtt{n}_k} \Big(\frac{\mathtt{a}^{d}}{d}|\xi|\Big)^{-1/\mathtt{n}_k}\Big\} &\leq 2c_{\mathtt{n}_k}\Big(\frac{\mathtt{a}^{d}}{d}\Big)^{-1/\mathtt{n}_k}(1+|\xi|)^{-1/\mathtt{n}_k}\\
\notag& \leq 2c_{\mathtt n_k} d \mathtt{a}^{-d}(1+|\xi|)^{-1/\mathtt{N}} \leq \mathtt{L}_{\mathtt N}(1+|\xi|)^{-1/\mathtt{N}}
\end{align}
with $\mathtt{L}_{\mathtt N} = 2d\mathtt{a}^{-d}\max\{c_2, \ldots, c_{\mathtt N}\}$.

{\em{Case 2:}} Next suppose that $\mathtt n_k = 1$. In view of \eqref{ordering}, this means  that $k=1$.  Property \eqref{def-k} thus implies that 
\begin{equation} \label{xi1 large}
 |\xi| \leq |\xi_1| + \cdots + |\xi_d| \leq (1+(d-1)\mathtt a)|\xi_1| \leq d|\xi_1|. 
\end{equation} 
 By virtue of \eqref{def-phikj}, \eqref{c1}, \eqref{c2}, and \eqref{a J}, we have 
 \begin{align} |\varphi''(s)| = \Bigl|\sum_{i=1}^{d} 2^{(\mathtt n_i - 2) j} \frac{\xi_i}{\xi_1} \Phi_i''(2^{-j}s)  \Bigr|  &\leq 2^{-j} \|\Phi\|_{C^\mathtt{N}} + \sum_{i=2}^{d} 2^{(\mathtt n_i - 2) j} \mathtt a \mathtt K_{\mathtt N} (2^{-j}s)^{\mathtt n_i-2} \nonumber \\ 
&\leq 2^{-\mathtt J} \|\Phi\|_{C^\mathtt{N}} + d\mathtt a \mathtt{K}_{\mathtt N} \leq 2d \mathtt K_{\mathtt N} \leq \mathtt{L}_{\mathtt N}  \label{ibp-prep}
\end{align}    
for a suitable choice of $\mathtt{L}_{\mathtt N}$.  Combining the lower bound \eqref{stationary phase prep} on $|\varphi'|$ and the upper bound \eqref{ibp-prep} on $|\varphi''|$ with integration by parts, we obtain 
\begin{align*}
|\widehat{\pi}(\xi)| =  \Bigl| \int_{c}^{1} e(\xi_1 \varphi(s)) ds \Bigr| &= \Bigl| \int_{c}^{1} \frac{1}{2\pi\xi_1 \varphi'(s)} \cdot\frac{d}{ds} [e(\xi_1 \varphi(s)) ] ds \Bigr| \\
&= \frac{1}{2\pi|\xi_1|}\biggl| \frac{e(\xi_1 \varphi(s))}{\varphi'(s)} \Bigr]_{c}^{1} + \int_{c}^{1} \frac{\varphi''(s)}{(\varphi'(s))^2}e(\xi_1 \varphi(s)) ds \biggr| \leq \frac{ \mathtt{L}_{\mathtt N}}{|\xi_1|}
\end{align*} 
for some (larger) choice of $\mathtt{L}_\mathtt{N}$.  We also have the trivial bound $|\widehat{\pi}(\xi)| \leq \|\pi\| \leq 1$.  These estimates, together with \eqref{xi1 large}, imply that
\begin{align} \label{case2}
|\widehat{\pi}(\xi)|  \leq \min \Bigl\{ 1, \frac{d\mathtt{L}_{\mathtt N}}{|\xi|} \Bigr\} \leq \mathtt{L}_{\mathtt N} (1+|\xi|)^{-1} \leq  \mathtt{L}_{\mathtt N} (1+|\xi|)^{-{1/\mathtt N}},
\end{align}
where here we have allowed the value of $\mathtt{L}_{\mathtt N}$ to change between the first two occurences.  Combining the conclusions \eqref{case1} and \eqref{case2} of the two cases completes the proof of \eqref{Fourier decay bound}.
\end{proof}

\subsection{Proof of Proposition \ref{spectral gap lemma}} \label{spectral gap lemma proof} 
We begin by defining the admissible constant $\mathtt{L}_{\mathtt N}$.  Let $\mathtt M_1$ be the admissible constant in Corollary \ref{convolution estimate corollary} (appearing there as $\mathtt{L}_{\mathtt N}$) when applied with
\begin{align}\label{def a_0}
\mathtt{a} = \frac{1}{2}\cdot 15^{-d}|B(0;1)|;
\end{align}
here, $|\cdot|$ refers to Lebesgue measure.  Let $\mathtt M_2$ be the admissible constant in Lemma \ref{measure bounds} (also appearing there as $\mathtt{L}_{\mathtt N}$).  We set
\begin{align*}
\mathtt{L}_{\mathtt N} \colonequals \max\{\mathtt M_1, \mathtt M_2, 2\pi|B(0;1)|+2+2\gamma_{\mathtt N}^{-1}\},
\end{align*}
where $\gamma_\mathtt{N}$ is the constant defined in \eqref{def sigma_N}.

Next we fix a function $\Phi \colon [0,1] \rightarrow \R^d$ in standard form that is vanishing of type $\mathtt{N}$ at the origin, and we define the inadmissible integer $\mathtt J(\Phi)$.  For this, we can just use the integer $\mathtt J$ from Lemma \ref{measure bounds}.

Now, let $\mathtt A, \mathtt B, \mathtt C$ be any choice of constants satisfying \eqref{AB conditions}, and let $\mu$ be any Borel probability measure $\mu$ on $[0,1]^d$ that obeys \eqref{spectral gap condition}.  We need to prove that \eqref{integral lower bound} holds.  With this aim in mind, fix $\pi = \pi[\Phi; j, \mathtt{A}^{-6d}]$ with $j \geq \mathtt J$, and fix $\delta \in (0, \mathtt{A}^{-3d}]$. Write
\begin{align*}
&\int\mu_\delta \ast \pi \, d\mu = \mathbb I_1 + \mathbb I_2,
\intertext{where}
&\mathbb I_1 \colonequals \int \mu_{\mathtt A^{-3d}} \ast \pi \, d\mu,\quad\quad \mathbb I_2 \colonequals \int (\mu_{\delta} - \mu_{\mathtt A^{-3d}}) \ast \pi \, d\mu.
\end{align*}
We claim that 
\begin{equation} \label{I claim} 
\mathbb I_1 \geq \frac{1}{2} \mathtt{L}_{\mathtt N}^{-1} \mathtt A^{-3d} \quad \text{and} \quad |\mathbb I_2| \leq \mathtt{L}_{\mathtt N} \mathtt A^{-4d}. 
\end{equation}  
This, together with our assumption on $\mathtt A$ in \eqref{AB conditions} and $\mathtt{L}_{\mathtt N} \geq 1$, would imply 
\eqref{integral lower bound}.

We start with $\mathbb I_1$.  Using the constant $\mathtt a$ defined in \eqref{def a_0}, let  
\[ \mathcal G =  \{x \in \supp \mu \colon  \eqref{lower bound ball pi} \text{ holds with }x_0 = x \}. \] 
Setting $h = \mathtt A^{-3d}$ in Corollary \ref{convolution estimate corollary} leads to the pointwise lower bound
\begin{align*} 
\mu_{\mathtt A^{-3d}} \ast \pi (x) &\geq \mathtt{L}_{\mathtt N}^{-1}\mathtt A^{-3d} \quad \text{for every } x \in \mathcal G,
\end{align*}
and consequently
\begin{align*}
\mathbb I_1 \geq \int_{\mathcal G} \mu_{\mathtt A^{-3d}} \ast \pi \, d\mu \geq \mathtt{L}_{\mathtt N}^{-1}\mathtt A^{-3d} \mu(\mathcal G). 
\end{align*}     
Thus, in order to obtain the lower bound for $\mathbb I_1$ claimed in \eqref{I claim}, it suffices to show that $\mu(\mathcal G) \geq 1/2$.  For each $x \in \mathcal G^c \colonequals [0,1]^d \setminus \mathcal G$, there exists by definition a ball $B_x$ of radius $r_x \in (0,1]$ centred at $x$ such that 
\begin{equation} \label{def-G}   
\mu(B_x) \leq \mathtt a r_x^d \leq \mathtt c |B_x|, \quad \text{ where } \mathtt c \colonequals \frac{\mathtt a}{|B(0;1)|} = \frac{1}{2}\cdot 15^{-d}. 
\end{equation}
The balls $\{B_x \colon x \in \mathcal G^c \}$ clearly cover $\mathcal G^c$ and lie within $[-1,2]^d$. The Vitali covering lemma, \cite[Lemma 1.9]{Falconer-GFS}, yields a countable set $X \subseteq \mathcal G^c$ such that the subcollection $\{B_x \colon x \in X\}$ continues to cover $\mathcal G^c$, but their scaled counterparts $\{\frac{1}{5}B_x\colon x\in X\}$ are pairwise disjoint. Here, $mB_x$ denotes a ball with the same centre as $B_x$ but $m$ times its radius. This leads to 
\begin{align*}
1 - \mu(\mathcal G) &= \mu([0,1]^d \setminus \mathcal G) \leq \sum_{x \in X}\mu(B_x) \leq \mathtt c \sum_{x \in X}|B_x|\\ &= 5^d\mathtt c \sum_{x \in X}\Big|\frac{1}{5}B_x\Big| =  5^d \mathtt c \Bigl| \bigcup_{x \in X} \frac{1}{5}B_x\Bigr| \leq  5^d \mathtt c |[-1,2]^d| = 15^d\mathtt c = \frac{1}{2}.
\end{align*}
Thus, $\mu(\mathcal G) \geq 1/2$, concluding the estimation of $\mathbb I_1$.

We now turn to $\mathbb I_2$.  By Plancherel's theorem, we have
\begin{align*}
|\mathbb I_2| \leq \int|\widehat{\mu}(\xi)|^2 |\widehat{\psi}(\delta\xi)-\widehat{\psi}({\mathtt A^{-3d}}\xi)|| \widehat{\pi}(\xi)| d\xi.
\end{align*} 
We will estimate this integral by breaking its domain into three pieces:  low frequencies  $\{|\xi| \leq \mathtt A\}$, moderate frequencies $\{\mathtt A \leq |\xi| \leq \mathtt B\}$, and high frequencies $\{|\xi| \geq \mathtt B\}$.  Beginning with the low-frequency piece, we use the trivial bounds $\|\widehat{\mu}\|_\infty \leq 1$ and $\|\widehat{\pi}\|_\infty \leq 1$ as well as  \eqref{conditions-psi} to get
\begin{align*}
\int_{|\xi| \leq \mathtt A} |\widehat{\mu}(\xi)|^2 |\widehat{\psi}(\delta\xi)-\widehat{\psi}({\mathtt A^{-3d}}\xi)|| \widehat{\pi}(\xi)| d\xi &\leq \int_{|\xi| \leq \mathtt A}\pi[(\delta|\xi|)^2 + (\mathtt{A}^{-3d}|\xi|)^2]d \xi\\
&\leq 2\pi|B(0;\mathtt A)|(\mathtt A^{-3d}\mathtt A)^2 \leq 2\pi|B(0;1)|{\mathtt A}^{-4d}.
\end{align*}
We use the spectral gap hypothesis in \eqref{spectral gap condition} to control the moderate-frequency piece, namely
\begin{align*}
\int_{|\xi| \in [\mathtt A, \mathtt B]} |\widehat{\mu}(\xi)|^2 |\widehat{\psi}(\delta\xi)-\widehat{\psi}({\mathtt A^{-3d}}\xi)|| \widehat{\pi}(\xi)| d\xi \leq 2\int_{|\xi| \in [\mathtt A, \mathtt B]}|\widehat{\mu}(\xi)|^2 d\xi \leq 2\mathtt{A}^{-4d};
\end{align*}
here, we have also used that $\|\widehat{\psi}\|_\infty = 1$ (from \eqref{conditions-psi}).  We are left to estimate the high-frequency piece.  For this we use Lemma \ref{measure bounds}, definitions \eqref{def energy} and \eqref{def sigma_N}, the energy condition in \eqref{spectral gap condition}, and our assumption on $\mathtt B$ in \eqref{AB conditions}.  We obtain
\begin{align*}
\int_{|\xi| \geq \mathtt B} |\widehat{\mu}(\xi)|^2 |\widehat{\psi}(\delta\xi)-\widehat{\psi}({\mathtt A^{-3d}}\xi)|| \widehat{\pi}(\xi)| d\xi &\leq 2\mathtt{L}_{\mathtt N} \int_{|\xi| \geq \mathtt B}|\widehat{\mu}(\xi)|^2|\xi|^{-\frac{1}{\mathtt N}}d\xi\\ &\leq 2\mathtt{L}_{\mathtt N} 
\mathtt B^{-\frac{1}{2\mathtt N}}\gamma_{\mathtt N}^{-1} I_{\sigma_{\mathtt N}}(\mu) \leq 2\mathtt{L}_{\mathtt N} \mathtt B^{-\frac{1}{2\mathtt N}}\gamma_{\mathtt N}^{-1}\mathtt C \leq  2\gamma_{\mathtt N}^{-1}\mathtt{A}^{-4d}.
\end{align*}
Now, summing the bounds for the three pieces and recalling our choice of choice of $\mathtt{L}_{\mathtt N}$, it follows that $|\mathbb{I}_2| \leq \mathtt{L}_{\mathtt N} \mathtt{A}^{-4d}$, as claimed.  This completes the proof.  \qed

\section{Constructing a suitable measure:  Proof of Proposition \ref{existence of measure}} \label{measure construction section}

The goal of this section is to prove Proposition \ref{existence of measure}, which establishes the existence of a certain measure. Before embarking on the construction of this measure, we pause to collect a few necessary tools.

\subsection{Measure-theoretic preliminaries}
Let $\vec{n} = (n_1, \ldots, n_d)$ be a vector with positive integer entries, and recall the definition of $\fD^\ast = \fD^\ast[\vec{n}]$ given in \eqref{def of D^*}.  Here, we do not assume that the entries of $\vec{n}$ are distinct or ordered.  The aim of this subsection is to define an anisotropic, dyadic version of ``Hausdorff content" using the collection $\fD^*$ and establish connections between it and the standard notion of Hausdorff dimension.  Although we opt to work from first principles, these connections can also be well understood using the broader framework of Hausdorff dimension in metric spaces; see \cite[Chapter 4]{Mattila}.  This perspective is described in Section \ref{metric space section}.  The lemmas stated in the present subsection may be unsurprising to experts. However, they do not appear in standard textbooks in the form that we need, and thus we provide their proof in the Appendix.

Our version of Hausdorff content is defined as follows:  Let $E \subseteq \R^d$ and $s \geq 0$.  Then
\begin{align*}
\fH^s_{\fD^*}(E) \colonequals \inf\Big\{\sum_{Q \in \fQ} \ell(Q)^s \colon \fQ \subseteq \fD^* \text{ and } E \subseteq \bigcup \fQ\Big\},
\end{align*}
where $\ell(Q)$ is as in \eqref{def ell_Q}.  We are particularly interested in conditions on $E$ and $s$ that guarantee the positivity of $\fH_{\fD^*}^s(E)$.  Let
\begin{align}\label{definition S, N}
\mathtt N = \mathtt N[\vec{n}] \colonequals \max\{n_1,\ldots,n_d\} \quad\quad \text{and} \quad\quad \mathtt S = \mathtt S[\vec{n}] \colonequals n_1 + \ldots + n_d. 
\end{align}
The following lemma addresses this question and establishes the basic connection between $\dim_{\mathrm H} E$ and $\fH_{\fD^*}^s(E)$.

\begin{lemma}\label{positivity}
If $E \subseteq \R^d$ and $0 \leq s < \mathtt S - (d-\dim_{\mathrm H} E)\mathtt N$, then $\fH_{\fD^*}^s(E) > 0$.
\end{lemma}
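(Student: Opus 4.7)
The plan is to argue by contradiction: suppose $\fH^s_{\fD^*}(E) = 0$, convert an arbitrary $\fD^*$-cover into a standard Euclidean cover of $E$ by small cubes, and show that the resulting $t$-dimensional Hausdorff sum (for a judiciously chosen $t$) is also small. For the right choice of $t$, this would force $\mathcal H^t(E) = 0$ while simultaneously $t < \dim_{\mathrm H} E$, which is the desired contradiction.

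The conversion exponent should be
\[ t \colonequals d - \frac{\mathtt S - s}{\mathtt N}. \]
The hypothesis $s < \mathtt S - (d - \dim_{\mathrm H} E)\mathtt N$ rearranges exactly to $t < \dim_{\mathrm H} E$, and the hypothesis $s \geq 0$ gives $t \geq d - \mathtt S/\mathtt N \geq 0$ since $\mathtt S \leq d \mathtt N$. Given $\epsilon > 0$, choose a cover $\{Q_k\} \subseteq \fD^*$ of $E$ with $\sum_k \ell(Q_k)^s < \epsilon$; write $Q_k \in \fD_{j_k}$. Each $Q_k$ has side lengths $2^{-n_i j_k}$, all of which are at least $2^{-\mathtt N j_k}$, so $Q_k$ decomposes into exactly $\prod_{i=1}^d 2^{(\mathtt N - n_i) j_k} = 2^{(\mathtt N d - \mathtt S) j_k}$ axis-parallel cubes of side $2^{-\mathtt N j_k}$, each of Euclidean diameter $\sqrt{d}\cdot 2^{-\mathtt N j_k}$. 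Summing diameters to the power $t$ over this refined cover gives
\[ \sum_k 2^{(\mathtt N d - \mathtt S) j_k} \cdot d^{t/2} \cdot 2^{-\mathtt N t j_k} = d^{t/2}\sum_k 2^{(\mathtt N d - \mathtt S - \mathtt N t) j_k} = d^{t/2}\sum_k 2^{-s j_k} = d^{t/2}\sum_k \ell(Q_k)^s < d^{t/2}\epsilon, \]
where the key algebraic identity $\mathtt N d - \mathtt S - \mathtt N t = -s$ is precisely the definition of $t$.

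To promote this into the statement $\mathcal H^t(E) = 0$, fix $\delta > 0$ and choose $\epsilon$ small enough that every $Q_k$ in the cover above satisfies $\sqrt{d}\cdot 2^{-\mathtt N j_k} \leq \sqrt{d}\,\ell(Q_k) \leq \sqrt{d}\,\epsilon^{1/s} \leq \delta$ (this step uses $s > 0$; the edge case $s = 0$ degenerates, since $\sum \ell(Q_k)^0 < \epsilon < 1$ forces an empty cover and hence $E = \emptyset$, in which case the claim is trivial). Thus $\mathcal H^t_\delta(E) \leq d^{t/2}\epsilon$, and letting $\epsilon, \delta \searrow 0$ yields $\mathcal H^t(E) = 0$, which forces $\dim_{\mathrm H} E \leq t$ and contradicts $t < \dim_{\mathrm H} E$. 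The main (minor) obstacle is identifying the correct conversion exponent $t$; once one recognizes that $2^{-\mathtt N j}$ is the shortest side of a box in $\fD_j$ and tiles accordingly, the exponent arithmetic and the precise form of the hypothesis fall out together.
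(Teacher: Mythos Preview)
Your proof is correct and follows essentially the same approach as the paper: both arguments tile each box $Q \in \fD_j$ into $2^{(d\mathtt N - \mathtt S)j}$ cubes of side $2^{-\mathtt N j}$ and exploit the exponent identity $s = \mathtt S - (d-t)\mathtt N$ (your $t$ is the paper's $\alpha$). The paper phrases this as a direct lower bound on $\sum_{Q}\ell(Q)^s$ for an arbitrary cover, handling large boxes by a separate case, whereas you run the contrapositive and use smallness of $\epsilon$ to force all boxes to be small; the substance is identical.
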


We will also need a restricted version of $\fH_{\fD^*}^s$, namely
\begin{align*}
\fH_{\fD_J^*}^s(E) \colonequals \inf\Big\{\sum_{Q \in \fQ} \ell(Q)^s \colon \fQ \subseteq \fD_J^* \text{ and } E \subseteq \bigcup \fQ\Big\},
\end{align*}
where $\fD_J^*$ is as in \eqref{def of D_J^*}.  From the definitions, it is clear that $\fH_{\fD^*}^s(E) \leq \fH_{\fD_J^*}^s(E)$.  The next lemma states that, when $E$ is contained in an element of $\fD_{\mathtt J}^*$, this inequality can be reversed.  This will be helpful in locating the dyadic box $\mathtt Q$ referenced in the proposition statement.

\begin{lemma}\label{reversed inequality}
Let $E \subset \R^d$ be a subset of some element of $\fD_J^*$ for some $J$.  Then $\fH_{\fD^*}^s(E) = \fH_{\fD_J^*}^s(E)$ for all $s \geq 0$.
\end{lemma}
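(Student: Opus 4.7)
The plan is to establish the reverse inequality $\fH_{\fD_J^*}^s(E) \leq \fH_{\fD^*}^s(E)$ by showing that any cover $\fQ \subseteq \fD^*$ of $E$ can be modified, without increasing its $s$-cost, into a cover by boxes in $\fD_J^*$. The key structural fact I will need is that the anisotropic dyadic family $\fD^*$ still enjoys the usual dyadic trichotomy: for any two boxes $Q, Q' \in \fD^*$, one has either $Q \subseteq Q'$, $Q' \subseteq Q$, or $Q \cap Q' = \emptyset$. This follows because each $\fD_{j'}$ with $j' > j$ refines $\fD_j$: a box of $\fD_j$, of dimensions $2^{-n_1 j} \times \cdots \times 2^{-n_d j}$, partitions into $2^{n_1(j'-j)} \times \cdots \times 2^{n_d (j'-j)}$ many congruent subboxes of $\fD_{j'}$ because each $n_i$ is a positive integer.

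Now fix $E \subseteq Q_0$ with $Q_0 \in \fD_{j_0}$ for some $j_0 \geq J$, and let $\fQ \subseteq \fD^*$ be any cover of $E$. First I would discard from $\fQ$ any box that does not meet $E$, since this only decreases the sum $\sum \ell(Q)^s$. Every remaining $Q \in \fQ$ satisfies $Q \cap E \neq \emptyset$, hence $Q \cap Q_0 \neq \emptyset$, and so by the trichotomy either $Q \subseteq Q_0$ or $Q_0 \subseteq Q$. I would then split into two cases.

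If some $Q \in \fQ$ satisfies $Q_0 \subseteq Q$, then $\ell(Q_0) \leq \ell(Q)$, and since $E \subseteq Q_0 \in \fD_J^*$, the singleton $\{Q_0\}$ is itself an admissible cover in $\fD_J^*$; this gives
\[
\fH_{\fD_J^*}^s(E) \leq \ell(Q_0)^s \leq \ell(Q)^s \leq \sum_{Q'' \in \fQ} \ell(Q'')^s.
\]
If instead no $Q \in \fQ$ contains $Q_0$, then every remaining $Q$ satisfies $Q \subseteq Q_0$, which forces $Q \in \fD_j$ for some $j \geq j_0 \geq J$ (a box of $\fD_j$ with $j < j_0$ has some side-length $2^{-n_i j} > 2^{-n_i j_0}$ and therefore cannot fit inside $Q_0$). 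Hence the reduced cover already lies in $\fD_J^*$, yielding again
\[
\fH_{\fD_J^*}^s(E) \leq \sum_{Q'' \in \fQ} \ell(Q'')^s.
\]

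In both cases the sum bounds $\fH_{\fD_J^*}^s(E)$ from above, and taking the infimum over covers $\fQ \subseteq \fD^*$ gives $\fH_{\fD_J^*}^s(E) \leq \fH_{\fD^*}^s(E)$. The reverse inequality $\fH_{\fD^*}^s(E) \leq \fH_{\fD_J^*}^s(E)$ is immediate from $\fD_J^* \subseteq \fD^*$, so we conclude equality. The only potential subtlety—and the step I would be most careful about—is justifying the dyadic trichotomy and the claim that a smaller-scale box $Q \not\subseteq Q_0$ must actually strictly contain $Q_0$; both rely on $n_1,\ldots,n_d$ being positive integers so that the grids $\fD_j$ genuinely refine each other as $j$ increases.
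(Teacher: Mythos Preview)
Your proof is correct and follows essentially the same approach as the paper's. The paper's argument is slightly more streamlined: it fixes $Q_E \in \fD_J$ containing $E$ and simply observes that any box $Q_0 \in \fQ \setminus \fD_J^*$ satisfies $\ell(Q_0)^s \geq 2^{-Js} = \ell(Q_E)^s \geq \fH_{\fD_J^*}^s(E)$, without needing to discard irrelevant boxes or invoke the dyadic trichotomy; but your more explicit version is equally valid.
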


Next, we record a version of Frostman's lemma adapted to $\fH_{\fD^*}^s$.  We will use this to construct the measure $\nu$ referenced in the proposition statement.
\begin{lemma}\label{alternate Frostman}
Let $E \subset \R^d$ be a compact set, and let $s \geq 0$.  Then there exists a Borel measure $\vartheta$ supported on $E$ such that $\|\vartheta\| \geq \fH_{\fD^*}^s(E)$ and $\vartheta(Q) \leq \ell(Q)^s$ for every $Q \in \fD^*$. 
\end{lemma}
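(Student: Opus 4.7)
The plan is as follows. If $\fH^s_{\fD^*}(E) = 0$ we take $\vartheta \equiv 0$; otherwise assume $\fH^s_{\fD^*}(E) > 0$. The first step is a reduction: since $E$ is compact, it is contained in a single box $Q_0 \in \fD_{j_0}$ for some integer $j_0$ (possibly negative), and the descendants of $Q_0$ in $\fD^*$ form a rooted $2^{\mathtt S}$-ary tree $\mathcal{T}$, where $\mathtt S$ is as in \eqref{definition S, N}. A short argument along the lines of Lemma \ref{reversed inequality} shows that any $\fD^*$-cover of $E$ can be refined, without raising its $s$-cost, to a $\mathcal{T}$-cover, so $\fH^s_{\fD^*}(E)$ equals the infimum of $\sum_{Q \in \fQ}\ell(Q)^s$ over $\mathcal{T}$-covers $\fQ$ of $E$.

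Next, for each integer $J \geq j_0$, I will approximate the desired measure via a finite linear program on the truncated tree consisting of $Q \in \mathcal{T}$ at levels $\leq J$. Letting $\fC_J \colonequals \{R \in \fD_J \colon R \cap E \neq \emptyset\}$, consider
\[
\mathcal{M}_J \colonequals \max\Bigl\{\sum_{R \in \fC_J} m_R \colon m_R \geq 0,\; \sum_{R \in \fC_J,\, R \subseteq Q} m_R \leq \ell(Q)^s \text{ for every } Q \in \mathcal{T}\Bigr\}.
\]
Its dual minimizes $\sum_Q \lambda_Q \ell(Q)^s$ over $\lambda_Q \geq 0$ satisfying $\sum_{Q \supseteq R} \lambda_Q \geq 1$ for each $R \in \fC_J$. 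The column of the dual constraint matrix indexed by $Q$ is the indicator of the descendants of $Q$ in $\fC_J$, so the constraint matrix is the subtree-incidence matrix of the rooted tree $\mathcal{T}$; such matrices are totally unimodular, so the dual admits a $\{0,1\}$-valued optimum representing an honest $\mathcal{T}$-cover of $\fC_J$, hence of $E$. By LP duality and the previous paragraph, $\mathcal{M}_J \geq \fH^s_{\fD^*}(E)$.

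Finally, fix an optimum $\{m_R^{(J)}\}$ and define $\vartheta_J \colonequals \sum_{R \in \fC_J} m_R^{(J)} |R|^{-1} \lambda|_R$, where $\lambda$ denotes Lebesgue measure. Then $\supp \vartheta_J$ lies within an $O(2^{-J})$-neighborhood of $E$, $\|\vartheta_J\| = \mathcal{M}_J \geq \fH^s_{\fD^*}(E)$, and $\vartheta_J(Q) \leq \ell(Q)^s$ for every $Q \in \fD^*$ (boxes $Q \supseteq Q_0$ are handled by $\|\vartheta_J\| \leq \ell(Q_0)^s$, boxes disjoint from $Q_0$ receive zero mass, and boxes at levels $j > J$ contained in some $R \in \fC_J$ are controlled using the inequality $\mathtt S \geq d \geq s$, which may be assumed since otherwise $\fH^s_{\fD^*}(E) = 0$ and the lemma is trivial). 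All $\vartheta_J$ are supported in $\overline{Q_0}$ with uniformly bounded mass, so Banach-Alaoglu yields a weak-$\ast$ subsequential limit $\vartheta_{J_k} \rightharpoonup \vartheta$; then $\supp \vartheta \subseteq E$ by compactness and the shrinking neighborhoods, $\|\vartheta\| \geq \liminf_k \|\vartheta_{J_k}\| \geq \fH^s_{\fD^*}(E)$ by lower semicontinuity, and the Frostman condition $\vartheta(Q) \leq \ell(Q)^s$ transfers to the limit via Portmanteau applied to the open sets $\operatorname{int} Q$ together with a standard (if slightly delicate) argument accounting for the half-open nature of dyadic boxes. The main technical points will be justifying the total-unimodularity claim for the tree LP and the boundary bookkeeping in the final weak-$\ast$ step; the rest is routine.
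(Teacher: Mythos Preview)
Your argument is correct and takes a genuinely different route from the paper. The paper follows the classical Frostman construction: start from normalized Lebesgue measure on the level-$j$ boxes meeting $E$, then iteratively \emph{cap} the mass at each coarser scale so that $\vartheta^j(Q)\le\ell(Q)^s$; the mass lower bound comes from observing that every $x\in E$ lies in some box where the cap is attained, and those boxes form a disjoint $\fD^*$-cover of $E$. You instead cast the finite-level problem as a linear program on the tree $\mathcal T$ and invoke duality: the dual is a fractional covering LP whose constraint matrix is the leaf--ancestor incidence matrix of a rooted tree, which has the consecutive-ones property (order the leaves by depth-first search) and is therefore totally unimodular, so the dual has a $\{0,1\}$ optimum---an honest cover. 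Both proofs then pass to a weak-$\ast$ limit. Your approach exposes the underlying max-flow/min-cut duality cleanly and avoids the inductive bookkeeping; the paper's approach is more elementary and self-contained, needing no LP theory.

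One small correction: in your handling of boxes at levels $j>J$ you write ``the inequality $\mathtt S \ge d \ge s$, which may be assumed since otherwise $\fH^s_{\fD^*}(E)=0$.'' The middle inequality $d\ge s$ is neither hypothesized nor a consequence of positive content; for instance, with $\vec n=(1,2)$ and $E=[0,1)^2$ one has $\fH^s_{\fD^*}(E)>0$ for every $s\le\mathtt S=3$, including $s>d=2$. What you actually need---and what \emph{does} follow from $\fH^s_{\fD^*}(E)>0$---is $s\le\mathtt S$, and that alone gives the bound you want: $\vartheta_J(Q)\le 2^{-Js}\cdot 2^{-(j-J)\mathtt S}\le 2^{-js}=\ell(Q)^s$. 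With that fix, your level-$j>J$ estimate goes through.
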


Our final lemma gives a connection between the Frostman condition for $\fD^*$ and the finiteness of energy integrals.  It will help us verify that the blown-up measure $\mu \colonequals \nu^\mathtt Q$ satisfies the energy condition in \eqref{spectral gap condition}.
\begin{lemma}\label{energy bound}
There exists a decreasing function $\mathtt E \colon (0,\infty) \rightarrow [1,\infty)$, depending only on $d$, with the following property:  For any constant $L \geq 0$ and any exponents $\sigma, s$ with $\sigma \in (0,d)$ and $s > \sigma + \mathtt S - d$, one has
\begin{align*}
\sup\Big\{I_\sigma(\vartheta) \colon \supp\vartheta \subseteq [0,1]^d,~\|\vartheta\| \leq 1, ~\sup_{Q \in \fD^*}\frac{\vartheta(Q)}{\ell(Q)^s} \leq L\Big\} \leq L\mathtt E(s -\sigma-\mathtt S+d).
\end{align*}
\end{lemma}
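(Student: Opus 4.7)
The strategy is to reduce the bound on $I_\sigma(\vartheta)$ to an isotropic Frostman-type estimate on Euclidean balls, and then establish this ball estimate by covering $B(x,r)$ with anisotropic dyadic boxes at a carefully chosen scale. Before starting, I would dispose of the trivial case $s > \mathtt S$: by refining the covering of any singleton $\{x\}$ to boxes in $\fD_j[\vec n]$ with $j \to \infty$, the total covering cost is bounded by $L \cdot 2^{(\mathtt S - s) j} \to 0$, forcing $\vartheta \equiv 0$ and the lemma trivially. Henceforth I assume $s \leq \mathtt S$.

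\textbf{Key step: ball-mass estimate.} The aim is to show
\[
\vartheta(B(x,r)) \leq C \, L \, r^{s + d - \mathtt S} \quad \text{for all } x \in \R^d \text{ and } r \in (0, \sqrt d\,].
\]
Let $n_{\min} = \min_i n_i$, and cover $B(x,r)$ by boxes from $\fD_{j^*}[\vec n]$ at scale $j^* = \lceil \log_2(1/r)/n_{\min} \rceil$. At this scale the coarsest side of any box has length $2^{-n_{\min} j^*} \leq r$, so every box meeting $B(x,r)$ lies inside the enlarged ball $B(x, (1 + \sqrt d\,)r)$. Since the boxes at scale $j^*$ partition $\R^d$ into pairwise disjoint cells of volume $2^{-\mathtt S j^*}$, the number $N$ of such boxes obeys the packing bound
\[
N \cdot 2^{-\mathtt S j^*} \leq |B(0;1)| \cdot (1 + \sqrt d\,)^d r^d.
\]
Multiplying by the Frostman bound $L \cdot 2^{-j^* s}$ per box, substituting $2^{j^*} \lesssim r^{-1/n_{\min}}$, and using $n_{\min} \geq 1$ together with $s \leq \mathtt S$ yields the claimed exponent $s + d - \mathtt S$.

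\textbf{Layer cake and conclusion.} For each fixed $x$, the layer-cake formula
\[
\int |x-y|^{-\sigma}\, d\vartheta(y) = \sigma \int_0^\infty r^{-\sigma-1} \vartheta(B(x,r))\, dr
\]
splits at $r = \sqrt d$. On $r \geq \sqrt d$, the trivial bound $\vartheta(B(x,r)) \leq \|\vartheta\| \leq 1$ contributes an amount controlled purely by $d$. On $r < \sqrt d$, the ball estimate gives integrand $\leq C L r^{\beta - \sigma - 1}$ with $\beta = s + d - \mathtt S$; since $\eta := \beta - \sigma = s - \sigma - \mathtt S + d > 0$ by hypothesis, the integral equals $(\sqrt d\,)^\eta/\eta \leq d^{d/2}/\eta$. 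Integrating the entire bound against $d\vartheta(x)$ and invoking $\|\vartheta\| \leq 1$ yields $I_\sigma(\vartheta) \leq L\, \mathtt E(\eta)$ for a decreasing function $\mathtt E$ controlled by $d^{d/2}/\eta + 1$.

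\textbf{Main obstacle.} The delicate point is ensuring that the constant $C$ in the ball-mass estimate depends only on $d$, as the lemma asserts, rather than on $\vec n$. The volume-packing bound on $N$ is itself a purely dimensional estimate; however, rounding $j^*$ up to an integer only guarantees $2^{j^*} \leq 2^{1/n_{\min}} r^{-1/n_{\min}}$, and raising to the $\mathtt S$-th power introduces a factor $2^{\mathtt S/n_{\min}}$ that depends on $\vec n$. I expect the remedy is either (i) to construct a Whitney-type mixed-scale cover, stitching together boxes from nearby dyadic scales so as to approximate the fractional scale $\log_2(1/r)/n_{\min}$ without rounding loss, or (ii) to absorb the $\vec n$-dependent factor into the $\eta$-dependence of $\mathtt E$ by observing that it can be dominated by $d$-only constants over the admissible range of $\eta$. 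Once the ball estimate is secured with a $d$-only constant, the remainder of the argument is essentially routine integration.
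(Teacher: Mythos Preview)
Your approach is correct and is a natural alternative to the paper's, but the ``main obstacle'' you flag dissolves once you track the constants more carefully. The rounding error is not $2^{\mathtt S/n_{\min}}$: from $j^* \leq \log_2(1/r)/n_{\min} + 1$ you get $2^{j^*} \leq 2\,r^{-1/n_{\min}}$, and the relevant power is $\mathtt S - s$ (not $\mathtt S$), since $\vartheta(B(x,r)) \leq N \cdot L\,2^{-sj^*} = (N\,2^{-\mathtt S j^*}) \cdot L\,2^{(\mathtt S - s)j^*} \leq C_d\,r^d \cdot L\,2^{\mathtt S - s}\,r^{-(\mathtt S - s)/n_{\min}}$. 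Now observe that the hypothesis $s > \sigma + \mathtt S - d$ together with $\sigma > 0$ forces $\mathtt S - s < d - \sigma < d$, so $2^{\mathtt S - s} < 2^d$ is already a $d$-only constant---your remedy (ii) works immediately, no Whitney patching needed. One genuinely small gap: the tail $r \geq \sqrt d$ in the layer-cake contributes $\|\vartheta\|(\sqrt d)^{-\sigma}$, which is not obviously $\leq L \cdot (\text{constant})$; but applying the Frostman bound to the single box $[0,1)^d \in \fD_0$ (and finitely many neighbours to capture $[0,1]^d$) gives $\|\vartheta\| \leq 2^d L$, so this term is absorbed as well.

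For comparison, the paper bypasses rounding altogether by working entirely at dyadic scales. It takes a Whitney decomposition of $[0,2)^d \times [0,2)^d \setminus \Delta$ into products $C \times C'$ of nonadjacent dyadic \emph{cubes} with adjacent parents, uses $|x-y| \geq 2^{-j}$ on such pairs, and then bounds $\vartheta(C')$ by tiling the cube $C' \in \fC_j$ \emph{exactly} by $2^{(\mathtt S - d)j}$ anisotropic boxes from $\fD_j$ (this count is exact because each side $2^{-n_i j}$ divides $2^{-j}$). Summing the geometric series gives $\mathtt E(t) = 6^d/(1 - 2^{-t})$. Your ball-mass-plus-layer-cake argument and the paper's Whitney argument are essentially two packagings of the same dyadic estimate; the paper's version is a bit cleaner because the exact tiling removes any rounding discussion, while yours is perhaps more direct conceptually.
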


\subsection{Proof of Proposition \ref{existence of measure}}
We now turn to the proof of Proposition \ref{existence of measure}, beginning with the selection of the constants $\mathtt A, \mathtt B, \mathtt C$ and $\varepsilon$.

\subsubsection{Choice of constants}
We start by setting
\begin{align}\label{choice of C}
\mathtt C \colonequals 4 \mathtt E\Big(\frac{1}{4\mathtt N}\Big),
\end{align}
where $\mathtt E$ is the function from Lemma \ref{energy bound}.  Let $\varphi \colon \R \rightarrow \R$ be a fixed nonnegative bump function supported in $[0,1)^d$ with $\int \varphi = 1$ and $\|\varphi\|_\infty \leq 2$.  This function will be used in the construction of the measure $\nu$ and in the verification that its blow-up $\mu$ obeys the energy and spectral gap conditions in \eqref{spectral gap condition}.  The values of $\mathtt A$ and $\mathtt B$ will depend on $\varphi$.  We define them as follows:  
Let $\mathtt A$ be chosen large enough that
\begin{align}\label{choice of A}
\mathtt{A}^d \geq 4\mathtt L_{\mathtt N}^2 \quad\quad \text{and} \quad\quad \int_{|\xi| \geq \mathtt A}|\widehat{\varphi}(\xi)|d\xi \leq \frac{1}{2}\mathtt{A}^{-4d},
\end{align}
where $\mathtt L_{\mathtt N}$ is the constant appearing in \eqref{AB conditions}.  Let $\mathtt B$ be chosen sufficiently large relative to $\mathtt A, \mathtt C$, and $\mathtt L_{\mathtt N}$, as specified in \eqref{AB conditions}.  This concludes our choice of the constants $\mathtt A, \mathtt B, \mathtt C$.

We are left to define $\varepsilon$.  Toward this end, we introduce another large admissible constant $\mathtt T$.  Specifically, we take $\mathtt T$ to be an integer satisfying
\begin{align}\label{choice of T}
4\pi\sqrt{d}\mathtt{B}|B(0;\mathtt B)|2^{-\mathtt T} \leq \frac{1}{2}\mathtt{A}^{-4d}.
\end{align}
We now define
\begin{align}\label{choice of epsilon}
\varepsilon \colonequals \min\Big\{\frac{\log_2(1+2^{-d\mathtt N\mathtt T -2})}{\mathtt N\mathtt T}, \frac{1}{4\mathtt N^2}\Big\}.
\end{align}

\subsubsection{Locating the dyadic box $\mathtt Q$}
At this point, we fix
\begin{itemize}
\item {a function $\Phi \colon [0,1] \rightarrow \R^d$ in standard form and vanishing of type $\mathtt N$ at the origin, and}
\item {a Borel set $K \subseteq \R^d$ with $\dim_{\mathrm H}K > d-\varepsilon$.}
\end{itemize}

Let $\mathtt J \colonequals \mathtt J(\Phi)$ as in Proposition \ref{spectral gap lemma}, and  let $\vec{\mathtt n} \colonequals (\mathtt n_1,\ldots,\mathtt n_d)$ be the vector of integers associated with $\Phi$ in \eqref{ordering}.  For the remainder of this section, $\vec{\mathtt n}$ should be used whenever an object depends on a vector of integers.  So, for example,  $\fD^* \colonequals \fD^*[\vec{\mathtt n}]$ and $\fD_{J}^* \colonequals \fD_J^*[\vec{\mathtt n}]$.

Our next goal is to locate a box $\mathtt Q \in \fD_{\mathtt J}^*$ such that $K \cap \overline{\mathtt Q}$ will support a measure whose blow-up the obeys energy and spectral gap conditions in \eqref{spectral gap condition}.  Henceforth, we will assume that $K$ is compact.  A straightforward application of Frostman's lemma for Borel sets (e.g.~\cite[Theorem 2.7]{Mattila2}) shows that $K$ contains a compact subset whose Hausdorff dimension strictly exceeds $d-\varepsilon$.  Therefore, this assumption is permissible.

Using Lemma \ref{positivity} and our dimension assumption on $K$, we select some
\begin{align}\label{choice of s}
s \in (\mathtt S -  \mathtt N \varepsilon, \mathtt S]
\end{align}
such that $\fH_{\fD^*}^s(K) > 0$.  It follows that $\fH_{\fD_{\mathtt J}^*}^s(K) > 0$ as well, and we also have $\fH_{\fD_{\mathtt J}^*}^s(K) < \infty$ trivially. We claim for each $\delta > 0$ there exists $Q \in \fD_{\mathtt J}^*$ such that
\begin{align}\label{high density bound}
\fH_{\fD_{\mathtt J}^*}^s(K \cap Q) \geq (1-\delta)\ell(Q)^s.
\end{align}
To see this, note that for each $c > 0$ there exists $\fQ_c \subseteq \fD_{\mathtt J}^*$ such that $\fQ_c$ covers $K$ and
\begin{align*}
\sum_{Q \in \fQ_c} \ell(Q)^s \leq \fH_{\fD_{\mathtt J}^*}^s(K) + c.
\end{align*}
If there were some $\delta > 0$ such that no $Q \in \fD_{\mathtt J}^*$ satisfied \eqref{high density bound}, then we would have
\begin{align*}
\fH_{\fD_{\mathtt J}^*}^s(K) \leq \sum_{Q \in \fQ_c}\fH_{\fD_{\mathtt J}^*}^s(K \cap Q) \leq (1-\delta)\sum_{Q \in \fQ_c}\ell(Q)^s \leq (1-\delta)(\fH_{\fD_{\mathtt J}^*}^s(K) + c),
\end{align*}
and taking $c$ sufficiently small would produce a contradiction.  This proves the claim.  By Lemma \ref{positivity}, condition \eqref{high density bound} is equivalent to the statement that
\begin{align}\label{high density bound D^*}
\fH_{\fD^*}^s(K \cap Q) \geq (1-\delta)\ell(Q)^s.
\end{align}
We now define $\mathtt Q$ to be any $Q \in \fD_{\mathtt J}^*$ such that \eqref{high density bound D^*} holds with $\delta \colonequals 2^{-\mathtt S \mathtt T-2}$.

\subsubsection{Constructing the measure $\nu$}
Let $\ch(\mathtt Q)$ denote the set of $\mathtt T^{\mathrm{th}}$-generation descendants of $\mathtt Q$ in $\fD^*$; that is,
\begin{align*}
\ch(\mathtt Q) \colonequals \{q \in \fD^* \colon q \subseteq \mathtt Q,~\ell(q) = 2^{-\mathtt T}\ell(\mathtt Q)\}.
\end{align*}
For the sake of readability, we will from now on denote elements of $\ch(\mathtt Q)$ using the lowercase letter $q$, while the capital letter $Q$ will continue to refer to generic elements of $\fD^*$.  We claim that
\begin{align}\label{moderate density bound}
\fH_{\fD^*}^s(K \cap q) \geq \frac{1}{2}\ell(q)^s \quad \text{for every }  q \in \ch(\mathtt Q).
\end{align}
To see this, let
\begin{align*}
\fG \colonequals \Big\{q \in \ch(\mathtt Q) \colon \fH_{\fD^*}^s(K \cap q) \geq \frac{1}{2}\ell(q)^s\Big\}
\end{align*}
and suppose for contradiction that $\fG \subsetneq \ch(\mathtt Q)$.  Then, by our choice of $\mathtt Q$, the inequality $\fH_{\fD^*}^s(K \cap q) \leq \ell(q)^s$, our choice of $s$ in \eqref{choice of s}, and the definition of $\varepsilon$ in \eqref{choice of epsilon}, we have
\begin{align*}
1-2^{-\mathtt S\mathtt T-2} &\leq \frac{\fH_{\fD^*}^s(K \cap \mathtt Q)}{\ell(\mathtt Q)^s}\\
&\leq \sum_{q \in \fG}\frac{\ell(q)^s}{\ell(\mathtt Q)^s} + \frac{1}{2}\sum_{q \in \ch(\mathtt Q) \setminus \fG}\frac{\ell(q)^s}{\ell(\mathtt Q)^s}\\
&= \sum_{q \in \ch(\mathtt Q)}\frac{\ell(q)^s}{\ell(\mathtt Q)^s} - \frac{1}{2}\sum_{q \in \ch(\mathtt Q)\setminus \fG}\frac{\ell(q)^s}{\ell(\mathtt Q)^s}\\
&\leq 2^{\mathtt S \mathtt T}2^{-\mathtt T s} - \frac{1}{2}\cdot 2^{-\mathtt T s} < 2^{\varepsilon \mathtt N \mathtt T} - 2^{-\mathtt S \mathtt T -1} \leq 1 + 2^{-\mathtt S \mathtt T-2}-2^{-\mathtt S \mathtt T-1}.
\end{align*}
This gives a contradiction (note the strict inequality), and so the claim is proved.

Combining the lower bound \eqref{moderate density bound} with Lemma \ref{alternate Frostman}, we get for each $q \in \ch(\mathtt Q)$ a Borel measure $\vartheta_q$ supported on $K \cap \overline{q}$ such that
\begin{align}\label{mass lower bound}
\|\vartheta_q\| \geq \frac{1}{2}\ell(q)^s \quad\quad\text{and}\quad\quad \vartheta_q(Q) \leq \ell(Q)^s \quad\text{for every } Q \in \fD^*.
\end{align}
The measure $\nu$ will be defined as a weighted sum of the measures $\vartheta_q$.  Specifically, let $\varphi$ be the bump function introduced above, and let ${\bf T}_{\mathtt Q}$ be the rescaling map defined in \eqref{rescaling map} that takes $\mathtt Q$ to $[0,1)^d$.  Let
\begin{align*}
w(q) \colonequals \int_{{\bf T}_{\mathtt Q}(q)} \varphi \quad\quad\text{and}\quad\quad \overline{\vartheta}_q \colonequals \frac{w(q)\ell(\mathtt Q)^s}{\|\vartheta_q\|}\vartheta_q
\end{align*}
for each $q \in \ch(\mathtt Q)$.  With these definitions in place, we take $\nu$ to be
\begin{align*}
\nu \colonequals \sum_{q \in \ch(\mathtt Q)} \overline{\vartheta}_q.
\end{align*}
It is clear that $\nu$ is supported on $K \cap \overline{\mathtt Q}$ and has total mass
\begin{align}\label{nu mass}
\|\nu\| = \sum_{q \in \ch(\mathtt Q)}\|\overline{\vartheta}_q\| = \sum_{q \in \ch(\mathtt Q)}w(q)\ell(\mathtt Q)^s = \ell(\mathtt Q)^s \int \varphi = \ell(\mathtt Q)^s > 0.
\end{align}

Our next goal is to analyze the blow-up of $\nu$ with respect to $\mathtt Q$.  For this measure to be well defined, it must satisfy $\nu(\mathtt Q) > 0$.  In view of \eqref{nu mass}, this could fail only if $\supp \nu$ lies entirely within the boundary of $\mathtt Q$.  We claim that
\begin{align}\label{zero on boundary}
\vartheta_q(\partial q) = 0 \quad \text{for every } q \in \ch(\mathtt Q);
\end{align}
here, $\partial E$ denotes the boundary of $E$.  This claim implies that $\nu(\partial \mathtt Q) = 0$, thus confirming that $\nu(\mathtt Q) > 0$, and says that for all intents and purposes the measures $\vartheta_q$ (and $\overline{\vartheta}_q$) have pairwise disjoint supports.  To prove \eqref{zero on boundary}, fix $q \in \ch(\mathtt Q)$ and let $J$ be such that $q \in \fD_J$.  For each $j \geq J$, let
\begin{align*}
\fQ_j(q) \colonequals \{Q \in \fD_j \colon Q \cap q \neq \emptyset\}.
\end{align*}
By the Frostman-type condition in \eqref{mass lower bound}, we have
\begin{align}\label{boundary of q union bound}
\vartheta_q(\partial q) \leq \#\fQ_j(q)2^{-js}.
\end{align}
We can estimate $\#\fQ_j(q)$ as follows:  For each $i \in \{1,\ldots, d\}$, the boundary of $q$ contains two $(d-1)$-dimensional faces with side-lengths $2^{-n_1J}, \ldots, [2^{-n_iJ}],\ldots, 2^{-n_dJ}$, where the term in brackets is omitted.  Together, these account for all of the faces of $\partial q$.  The two faces corresponding to $i$ each intersect exactly
\begin{align*}
\frac{\prod_{k \neq i} 2^{-n_kJ}}{\prod_{k \neq i} 2^{-n_kj}} = 2^{(\mathtt S - n_i)(j-J)}
\end{align*}
boxes in $\fD_j$.  Hence,
\begin{align}\label{Q_j cardinality bound}
\#\fQ_j(q) \leq 2\sum_{i=1}^d  2^{(\mathtt S - n_i)(j-J)} \leq 2d 2^{(\mathtt S - 1)(j-J)}.
\end{align}
Combining \eqref{boundary of q union bound} and \eqref{Q_j cardinality bound}, we get
\begin{align*}
\vartheta_q(\partial q) \leq 2d2^{-(\mathtt S - 1)J}2^{j(\mathtt S-1-s)}.
\end{align*}
By our choice of $s$ in \eqref{choice of s} and $\varepsilon$ in \eqref{choice of epsilon} (specifically, that $\mathtt N\varepsilon \leq 1$), we have $\mathtt S - 1 - s < 0$.  Thus, sending $j \rightarrow \infty$ yields \eqref{zero on boundary}.

\subsubsection{Verification of the energy condition}
It remains to show that the blow-up $\mu \colonequals \nu^\mathtt{Q}$ satisfies the energy and spectral gap conditions in \eqref{spectral gap condition}.  We begin with the former, namely that $I_{\sigma_{\mathtt N}}(\mu) \leq \mathtt C$, with $\sigma_\mathtt{N}$ and $\mathtt C$ as defined in \eqref{def sigma_N} and \eqref{choice of C}, respectively.  By our choice of $s$ in \eqref{choice of s} and $\varepsilon$ in \eqref{choice of epsilon}, we have
\begin{align*}
s - \sigma_\mathtt{N} - \mathtt S + d \geq -\mathtt N \varepsilon + \frac{1}{2\mathtt N} \geq -\frac{1}{4\mathtt N} + \frac{1}{2\mathtt N} = \frac{1}{4\mathtt N}.
\end{align*}
Therefore, by Lemma \ref{energy bound}, it suffices to show that $\mu$ obeys the Frostman condition
\begin{align}\label{dyadic Frostman condition}
\mu(Q) \leq  4\ell(Q)^s \quad \text{for every } Q \in \fD^*.
\end{align}  
For each $q \in \ch(\mathtt Q)$, we use \eqref{choice of s} and the first property in \eqref{mass lower bound} to get that
\begin{align}\label{size of weight}
w(q)\ell(\mathtt Q)^s \leq \|\varphi\|_\infty|{\bf T}_{\mathtt Q}(q)|\ell(\mathtt Q)^s &\leq 2\Big(\frac{\ell(q)}{\ell(\mathtt Q)}\Big)^\mathtt{S}\ell(\mathtt Q)^s\\
\notag &= 2\Big(\frac{\ell(q)}{\ell(\mathtt Q)}\Big)^{\mathtt S - s}\ell(q)^s \leq 2\ell(q)^s \leq 4\|\vartheta_q\|.
\end{align}
Consequently, by the second property in \eqref{mass lower bound}, each $\overline{\vartheta}_q$ obeys
\begin{align*}
\overline{\vartheta}_q(Q) \leq 4 \vartheta_q(Q) \leq 4 \ell(Q)^s \quad \text{for every } Q \in \mathtt \fD^*.
\end{align*}
We claim that this property implies condition \eqref{dyadic Frostman condition} with $\nu$ in place of $\mu$, i.e.
\begin{align}\label{nu Frostman condition}
\nu(Q) \leq 4\ell(Q)^s \quad \text{for every } Q \in \fD^*.
\end{align}
To see this, fix $Q \in \fD^*$ and consider three cases:
\begin{itemize}
\item{If $\ell(Q) \leq 2^{-\mathtt T}\ell(\mathtt Q)$, then $Q$ intersects at most one box in $\ch(\mathtt Q)$; thus
\begin{align*}
\nu(Q) \leq \max_{q \in \ch(\mathtt Q)} \overline{\vartheta}_q(Q) \leq 4\ell(Q)^s.
\end{align*}}
\item{If $\ell(Q) \geq \ell(\mathtt Q)$, then \eqref{nu mass} implies that
\begin{align*}
\nu(Q) \leq \|\nu\| =  \ell(\mathtt Q)^s \leq \ell(Q)^s.
\end{align*}}
\item{If $2^{-\mathtt T}\ell(\mathtt Q) \leq \ell(Q) \leq \ell(\mathtt Q)$, then using the first line of \eqref{size of weight}, as well as \eqref{choice of s}, we get
\begin{align*}
\nu(Q) &\leq \#\{q \in \ch(\mathtt Q) \colon q \cap Q \neq \emptyset\} \max_{q \in \ch(\mathtt Q)}\|\overline{\vartheta}_q\|\\
&= \Big(\frac{\ell(Q)}{2^{-\mathtt T}\ell(\mathtt Q)}\Big)^\mathtt{S}\max_{q \in \ch(\mathtt Q)}w(q)\ell(\mathtt Q)^s\\
&\leq \Big(\frac{\ell(Q)}{2^{-\mathtt T}\ell(\mathtt Q)}\Big)^\mathtt{S}2^{-\mathtt S \mathtt T+1}\ell(\mathtt Q)^s = 2\Big(\frac{\ell(Q)}{\ell(\mathtt Q)}\Big)^{\mathtt S - s}\ell(Q)^s \leq 2\ell(Q)^s.
\end{align*}}
\end{itemize}
Collectively, these imply \eqref{nu Frostman condition}.  Now, in order to verify \eqref{dyadic Frostman condition}, we need one more simple fact regarding dyadic boxes, namely
\begin{align*}
&{\bf T}_{Q}(Q') \in \fD^*\quad \text{with} \quad \ell({\bf T}_{Q}(Q')) = \frac{\ell(Q')}{\ell(Q)} 
\intertext{and (equivalently)}
&{\bf T}_{Q}^{-1}(Q') \in \fD^*\quad \text{with} \quad \ell({\bf T}_{Q}^{-1}(Q')) = \ell(Q)\ell(Q')  \quad\quad \text{for all } Q,Q' \in \fD^*.
\end{align*}
Combining this observation with the definition of blow-up in \eqref{def blow-up} and \eqref{push-forward measure}, as well as with \eqref{nu mass} and \eqref{nu Frostman condition}, we obtain
\begin{align*}
\mu(Q) = \frac{\nu({\bf T}_{\mathtt Q}^{-1}(Q))}{\|\nu\|} \leq \ell(\mathtt Q)^{-s}4\ell({\bf T}_{\mathtt Q}^{-1}(Q))^s = 4\ell(Q)^s
\end{align*}
for every $Q \in \fD^*$, confirming \eqref{dyadic Frostman condition}.

\subsubsection{Verification of the spectral gap condition}
Finally, we need to prove that $\mu$ satisfies the spectral gap condition in \eqref{spectral gap condition}.  Let $\fD_{\mathtt T, 0}$ denote the set of $\mathtt{T}^{\mathrm{th}}$-generation descendants of $[0,1)^d$ in $\fD^*$; that is,
\begin{align*}
\fD_{\mathtt T, 0} \colonequals \{Q \in \fD_{\mathtt T} \colon Q \subseteq [0,1)^d\} = \{{\bf T}_{\mathtt Q}(q) \colon q \in \ch(\mathtt Q)\}.
\end{align*}
We have
\begin{align}\label{equality on cubes}
\mu(Q) = \frac{\nu({\bf T}_{\mathtt Q}^{-1}(Q))}{\|\nu\|} = \ell(\mathtt Q)^{-s}\|\overline{\vartheta}_{{\bf T}_{\mathtt Q}^{-1}(Q)}\| = w({\bf T}_{\mathtt Q}^{-1}(Q)) = \int_Q \varphi \quad \text{for every } Q \in \fD_{\mathtt T, 0}.
\end{align}
We will treat $\varphi$ as a measure via the formula
\begin{align*}
\int fd\varphi \colonequals \int f\varphi;
\end{align*}
thus \eqref{equality on cubes} becomes $\mu(Q) = \varphi(Q)$.  Let $c_Q$ denote the centre of the box $Q$.  If $Q \in \fD_{\mathtt T,0}$, then
\begin{align*}
\int_Q e^{-2\pi i c_Q \cdot \xi} d\mu(x) = \int_Q e^{-2\pi i c_Q \cdot \xi} d \varphi(x)
\end{align*}
for any $\xi \in \R^d$.  For fixed $\xi$, the function $x \mapsto e^{-2\pi i x \cdot \xi}$ is Lipschitz with constant at most $2\pi|\xi|$.  Since $|x- c_Q| \leq \sqrt{d}\ell(Q)$ for $x \in Q$, it follows that
\begin{align*}
|\widehat{\mu}(\xi) - \widehat{\varphi}(\xi)| &= \Big\vert\int e^{-2\pi ix\cdot \xi}d\mu(x) - \int e^{-2\pi i x \cdot \xi} d\varphi(x)\Big\vert \\
&\leq \sum_{Q \in \fD_{\mathtt T,0}}\Big\vert\int_Q e^{-2\pi ix\cdot \xi}d\mu(x) - \int_Q e^{-2\pi i x \cdot \xi} d\varphi(x)\Big\vert\\
&\leq \sum_{Q \in \fD_{\mathtt T, 0}}\Big(\int_Q |e^{-2\pi ix\cdot \xi} - e^{-2\pi i c_Q \cdot \xi}| d\mu(x) + \int_Q |e^{-2\pi ix\cdot \xi} - e^{-2\pi i c_Q \cdot \xi}| d\varphi(x)\Big)\\
&\leq \sum_{Q \in \fD_{\mathtt T, 0}} 2\pi|\xi|\sqrt{d}2^{-\mathtt T}(\mu(Q)+\varphi(Q)) = 4\pi\sqrt{d}|\xi|2^{-\mathtt T}.
\end{align*}
Now, using that $\|\widehat{\mu}\| \leq 1$, as well as our assumptions on $\mathtt A$ and $\mathtt T$ in \eqref{choice of A} and \eqref{choice of T}, we obtain
\begin{align*}
\int_{|\xi| \in [\mathtt A, \mathtt B]} |\widehat{\mu}(\xi)|^2 d\xi \leq \int_{|\xi| \in [\mathtt A, \mathtt B]} |\widehat{\mu}(\xi)| d\xi &\leq \int_{|\xi| \in [\mathtt A, \mathtt B]} |\widehat{\mu}(\xi)-\widehat{\varphi}(\xi)| d\xi + \int_{|\xi| \in [\mathtt A, \mathtt B]} |\widehat{\varphi}(\xi)| d\xi\\
&\leq 4\pi\sqrt{d}2^{-\mathtt T}\int_{|\xi| \in [\mathtt A, \mathtt B]} |\xi| d\xi + \frac{1}{2}\mathtt{A}^{-4d}\\
&\leq 4\pi\sqrt{d}2^{-\mathtt T}\mathtt B |B(0;\mathtt B)| + \frac{1}{2}\mathtt{A}^{-4d} \leq \mathtt{A}^{-4d},
\end{align*}
which completes the proof. \qed

\section{Anisotropic boxes and Hausdorff dimension in metric spaces}\label{metric space section}
In the previous section, we defined the ``Hausdorff content" $\fH_{\fD^*}^s$ associated to a collection $\fD^*$ of anisotropic dyadic boxes in $\R^d$.  It was used in the following way:  Starting with a set $K \subseteq \R^d$ of very high Hausdorff dimension, we located a box $\mathtt Q \in \fD^*$ such that $K$ had nontrivial (anisotropic) Hausdorff content within each descendant of $\mathtt Q$ of a certain generation.  This enabled us to use a version of Frostman's lemma to construct a measure on $K \cap \overline{\mathtt Q}$ whose blow-up satisfied both conditions in \eqref{spectral gap condition}.  In this section, we reinterpret $\fD^*$ and $\fH_{\fD^*}^s$ in terms of Hausdorff dimension in metric spaces; see \cite[Chapter 4]{Mattila}.  This connection was first stated in \cite{KOS}, though in less generality.

Let $\vec{n} = (n_1,\ldots, n_d)$ be a fixed vector of positive integers, and let $\rho = \rho[{\vec n}]$ be the metric on $\R^d$ given by
\begin{align*}
\rho(x,y) \colonequals \max_{1 \leq i \leq d} (2|x_i-y_i|)^{1/n_i}.
\end{align*}
The closed $\rho$-ball of radius $r$ centred at $x$ takes the form
\begin{align*}
B_\rho(x; r) \colonequals \{y \colon \rho(x,y) \leq r\} = x+\prod_{i=1}^d \Big[-\frac{r^{n_i}}{2}, \frac{r^{n_i}}{2}\Big].
\end{align*}
Elements of $\fD^*[\vec{n}]$ can therefore be viewed as $\rho$-balls.  More precisely, if $Q \in \fD^*$ and $c_Q$ is the centre of $Q$, then
\begin{align*}
\overline{Q} = c_Q + \prod_{i=1}^d \Big[-\frac{\ell(Q)^{n_i}}{2}, \frac{\ell(Q)^{n_i}}{2}\Big] = B_\rho(c_Q, \ell(Q)).
\end{align*}
The $s$-dimensional Hausdorff measure $\mathscr{H}^s = \mathscr{H}^s[\rho]$ is defined for the metric space $(\R^d, \rho)$ in the usual way, namely
\begin{align*}
\mathscr{H}^s(E) \colonequals \lim_{\delta \searrow 0}\mathscr{H}_\delta^s(E),
\end{align*}
where
\begin{align*}
\mathscr{H}_\delta^s(E) \colonequals \inf\Big\{\sum_{U \in \mathscr{U}} \diam(U)^s \colon \mathscr{U} \text{ is a countable cover of $E$, } \sup_{U \in \mathscr{U}}\diam U \leq \delta\Big\}.
\end{align*}
Of course, here $\diam U$ refers to the $\rho$-diameter of $U$.  If $U$ is a $\rho$-ball, then this is $2^{1/\min_i n_i}$ times its radius.  The quantities $\fH_{\fD^*}^s$ and $\fH_{\fD_J^*}^s$ defined in the previous section can be viewed as discrete versions of $\mathscr{H}_\infty^s$ and $\mathscr{H}_{2^{-J}}^s$, respectively.

Lemma \ref{positivity} provided a connection between $\fH_{\fD^*}^s$ and the standard (Euclidean) Hausdorff dimension; namely, if $s$ is not too large relative to $\dim_{\mathrm H} E$, then $\fH_{\fD^*}^s(E) > 0$.  We get a more complete picture by considering Hausdorff dimension in $(\R^d, \rho)$.  This is again defined in the usual way:
\begin{align*}
\dim_{\mathrm{H}^*}E \colonequals \inf\{s \colon \mathscr{H}^s(E) = 0\} = \sup\{s \colon \mathscr{H}^s(E) = \infty\}.
\end{align*}
If $\mathtt N$ and $\mathtt S$ are defined as in \eqref{definition S, N}, then we have the relations
\begin{align}\label{H dim comparison}
\mathtt S - (d-\dim_{\mathrm H} E)\mathtt N \leq \dim_{\mathrm{H}^*} E \leq \mathtt S
\end{align}
for all sets $E \subseteq \R^d$. The proof of \eqref{H dim comparison} is similar to that of Lemma \ref{positivity} (appearing in the Appendix); in particular, the first inequality follows by establishing that $\mathscr{H}_\infty^s(E) > 0$ for all $ s < \mathtt S - (d- \dim_{\mathrm H} E) \mathtt N$.  It is worth noting that when $\vec{n} = (1,1,\ldots,1)$, the corresponding metric $\rho$ is a multiple of the Euclidean sup norm, and the balls it defines are just Euclidean cubes.  Consequently, the definition of Hausdorff dimension in $(\R^d, \rho)$ agrees with the usual one, and \eqref{H dim comparison} reduces to the statement that $\dim_{\mathrm{H}}E \leq d$.

\section{Appendix} \label{appendix}
\subsection{Standardization of a function of finite type} 
\begin{proof}[Proof of Lemma \ref{linear transformation}]
Let $\Theta \colon \mathtt I \rightarrow \R^d$ be a smooth function that is vanishing of type $\mathtt N$ at the origin.  This means that $\mathtt N$ is the smallest integer with the following property:  
\begin{equation} \label{finite type for Theta} 
\text{For every } u \in \mathbb R^d \setminus \{0\} \text{ there exists } n \in \{1, \ldots, \mathtt N\} \text{ such that } u \cdot \Theta^{(n)}(0) \ne 0. 
\end{equation} 
Since $\Theta(0) = 0$, there exists a unique $d$-tuple $\vec{\mathtt m} = (\mathtt m_1,\ldots, \mathtt m_d)$ of positive integers such that 
\begin{align}\label{form of G}
\Theta  = (\Theta_1, \ldots, \Theta_d) \quad \text{with} \quad \Theta_i(t) = t^{\mathtt m_i} \theta_i(t)
\end{align}
for some smooth functions $\theta_i \colon \mathtt I \rightarrow \R^d$ obeying $\theta_i(0) \neq 0$. 
We will refer to $\vec{\mathtt m}$ as the \emph{vanishing pattern} of $\Theta$. Our goal is to produce an invertible linear map ${\bf L} \colon \R^d \rightarrow \R^d$ such that the vanishing pattern $\vec{\mathtt n}$ of ${\bf L} \circ \Theta$ satisfies $\mathtt n_1 < \mathtt n_2 < \cdots < \mathtt n_d = \mathtt N$.  Once we have this, we can apply a diagonal map to ${\bf L} \circ \Theta$ to achieve the condition $\phi_1(0) = \cdots = \phi_d(0) = 1$; such a map leaves the vanishing pattern unchanged. This would establish both requirements \eqref{ordering} and \eqref{coefficients}. Definition \eqref{finite type for Theta} ensures that the type of a function is preserved under invertible linear transformations; i.e., ${\bf L} \circ \Theta$ remains vanishing of type $\mathtt N$ at the origin for any invertible linear transformation ${\bf L}$. In view of this, we will outline a sequence of such transformations, relabelling the transformed function $\Theta$ at each stage, until we reach a function $\Theta$ that obeys  \eqref{standard form}, \eqref{ordering}, and \eqref{coefficients}. The map ${\bf L}$ in the statement of Lemma \ref{linear transformation} is the composition of the maps used to reach this final $\Theta$.

Our first task is to create a map ${\bf L}$ such that $\mathtt N$ is present in the vanishing pattern of ${\bf L} \circ \Theta$.  From \eqref{finite type for Theta}, we know that there exists a nonzero vector $\mathtt u = (\mathtt u_1,\ldots, \mathtt u_d)$ in $\R^d$ such that 
\[ \mathtt u \cdot \Theta^{(n)}(0) = 0 \quad \text{for $1 \leq n \leq \mathtt N-1$} \quad\quad \text{and} \quad\quad  \mathtt u \cdot \Theta^{(\mathtt N)}(0) \ne 0. \] 
We may assume without loss of generality that $\mathtt u_d \neq 0$.  Let ${\bf L}$ be the linear map given by
\[{\bf L}(x) \colonequals (x_1, \ldots, x_{d-1}, \mathtt u \cdot x) \quad \text{ for } x = (x_1, \ldots, x_d) \in \mathbb R^d. \]  
Then ${\bf L} \circ \Theta =:  (\widetilde{\Theta}_1, \ldots, \widetilde{\Theta}_d)$ obeys  
\[ \widetilde{\Theta}_i(t) = \begin{cases} \Theta_i(t) = t^{\mathtt m_i}{\theta}_i(t) &\text{ if } 1 \leq i < d, \\ \mathtt u \cdot \Theta(t) = t^{\mathtt N} \widetilde{\theta}_d(t) &\text{ if } i = d   \end{cases} \]
for some smooth function $\widetilde{\theta}_d$ such that $\widetilde{\theta}_d(0) \neq 0$. 
Thus, $\mathtt N$ appears in the vanishing pattern of ${\bf L} \circ \Theta$ and is necessarily its largest entry.  We relabel ${\bf L} \circ \Theta$ as $\Theta$ and assume it takes the form \eqref{form of G}, with $\max_i \mathtt m_i = \mathtt N$.  The value of $\max_i \mathtt m_i$ will never decrease in the sequence of transformations applied hereafter.

It remains to ensure that the entries of the vanishing pattern of $\Theta$ can be made to obey the strict monotonicity in \eqref{ordering} after a further linear transformation ${\bf L}$. If these entries are already distinct, then the construction of ${\bf L}$ is easy; we can take ${\bf L}$ to be a suitable permutation map.  However, in general the vanishing pattern may have repeated entries, and such coincidences have to be eliminated. With that goal in mind, we claim the following:

\emph{Let $\Theta$ be a function with vanishing pattern $\vec{\mathtt m}$.  If there exist indices $i_0, i_1 \in \{1,\ldots,d\}$ with $i_0 < i_1$ such that $\mathtt m_{i_0} = \mathtt m_{i_1}$, then there exists an invertible linear map ${\bf L}$ such that ${\bf L} \circ \Theta$ has vanishing pattern $\vec{\mathtt m} + \ell {\mathtt e}_{i_1}$ for some $\ell \geq 1$.}

(Here, $\mathtt{e}_1, \ldots, \mathtt{e}_d$ denote the canonical basis vectors of $\mathbb R^d$.) In other words, if the vanishing pattern of $\Theta$ has a repeated value, then there exists a linear map ${\bf L}$ such that the vanishing pattern of ${\bf L}\circ \Theta$ is the same, except for one of the repeated entries having increased.  For now, assume this claim holds.  By applying the claim iteratively, we can remove any coincidences in the vanishing pattern of $\Theta$, as follows. First, set $i_0 = 1$. For every $j > 1$, we check whether $\mathtt m_1 = \mathtt m_j$. If not, then no action is needed. If the equality holds, we apply the claim with  $i_1 = j$ (and $i_0=1$) to get a function whose vanishing pattern has distinct values in the first and the $j^{\text{th}}$ entries; the other entries have not changed.  At the end of $d-1$ such checks with indices $j = 2, 3, \ldots, d$, we obtain a function with the property that the first entry $\mathtt m_1$ of its vanishing pattern is never repeated among the later entries.  This concludes the first step. Next, we repeat this process with $i_0=2$ and $j > 2$.  This produces a function with a vanishing pattern in which neither of the first two entries appears among the later ones.  Proceeding in this way, we reach after $d-1$ steps a function whose vanishing pattern has no repeated values. Finally, we apply a permutation map to rearrange the pattern into increasing order, as suggested above.  Thus, we have constructed ${\bf L}$ as the composition of the linear maps behind our applications of the claim, together with the final permutation.

We are left to prove the claim.  Fix the indices $i_0 < i_1$ with $\mathtt m_{i_0} = \mathtt m_{i_1}$, as specified there. Assuming $\Theta$ takes the form \eqref{form of G}, we define the linear map ${\bf L}$ as follows: 
\begin{align*}
{\bf L}(x) \colonequals y, \quad \text{where} \quad
y_i \colonequals \begin{cases} x_i &\text{if } i \neq i_1,\\
x_{i} - \frac{\theta_{i_1}(0)}{\theta_{i_0}(0)}x_{i_0} &\text{if } i = i_1.
\end{cases}
\end{align*}
The transformed function ${\bf L} \circ \Theta =: (\widetilde{\Theta}_1, \ldots, \widetilde{\Theta}_d)$ satisfies $\widetilde{\Theta}_i(t) = t^{\mathtt m_i}\widetilde{\theta}_i(t)$ for
\begin{align*}
\widetilde{\theta}_i \colonequals \begin{cases}
\theta_i &\text{if } i \neq i_1, \\
\theta_{i_1} - \frac{\theta_{i_1}(0)}{\theta_{i_0}(0)} \theta_{i_0} &\text{if } i = i_1.
\end{cases}
\end{align*}
Now, on one hand, $\widetilde{\theta}_{i_1}(0) = 0$.  On the other hand, $t^{\mathtt m_{i_1}}\widetilde{\theta}_{i_1}(t) = \widetilde{\Theta}_{i_1}(t) = \mathtt u\cdot \Theta(t)$ for some nonzero vector $\mathtt u \in \R^d$, and the finite type hypothesis \eqref{finite type for Theta} implies the existence of an integer $\mathtt n \in \{1,\ldots, \mathtt N\}$ such that  $\mathtt u\cdot \Theta(t) = t^{\mathtt n} \theta_{\mathtt u}(t)$ for some smooth function $\theta_{\mathtt u}$ with $\theta_{\mathtt u}(0) \neq 0$.  Equating $t^{\mathtt m_{i_1}}\widetilde{\theta}_{i_1}(t)$ and $t^{\mathtt n} \theta_{\mathtt u}(t)$ and differentiating $\mathtt n$ times at the origin shows that $\mathtt n > \mathtt m_{i_1}$.  Setting $\ell \colonequals \mathtt n-\mathtt m_{i_1}$, it follows that ${\bf L} \circ \Theta$ has vanishing pattern $\vec{\mathtt m} + \ell {\mathtt e}_{i_1}$, and so the claim is proved.  
\end{proof}

\subsection{Partial-avoidance of graph-like curves}

\begin{proof}[Proof of Proposition \ref{partial-avoidance graphs}]
The proof is quite similar to that of Proposition \ref{generalized-graph-lemma}, so we will omit some details.  Fix ${\bf L}$ and $\Phi \colon \mathtt I \rightarrow \R^d$ satisfying conditions (i)--(iii) in the definition of graph-like curve, with $\mathtt m$ in condition (ii) being the subtype of $\Gamma$.  We may assume without loss of generality that ${\bf L}$ is the identity.  Since $\Gamma$ contains the origin, we necessarily have $0 \in \mathtt I$ and $\Phi(0)= 0$.  Additionally, since $\Gamma$ is of type $\mathtt N$ at the origin, it follows that $\Phi$ is of type at least $\mathtt N$ at zero.  Therefore, there exists a unit vector $\mathtt u \in \R^d$ such that $\mathtt u \cdot \Phi^{(n)}(0) = 0$ for every $n \in \{0,1,\ldots, \mathtt N - 1\}$.  Let $z$ be defined as in the proof of Proposition \ref{generalized-graph-lemma}.  Property  \eqref{z_1 upper bound} still holds with this $z$, but now only for $n \in \{0,1,\ldots, \mathtt N\}$.  Properties \eqref{z derivative condition} and \eqref{z bar lower bound} remain valid without any changes.

Let
\begin{align*}
\overline{s} \colonequals \min\Big\{\frac{\mathtt N}{\mathtt m}, d-\frac{(d-1)\mathtt m}{\mathtt N}\Big\}.
\end{align*}
Lemma \ref{linear transformation} and the definition of subtype given in Subsection \ref{quantitative partial-avoidance} imply that $\mathtt N \geq \mathtt m + d-1$. In particular, we have $\mathtt N > \mathtt m$, so that $\overline{s} \in (1,d)$.  It suffices to prove the proposition for all $s$ sufficiently close to $\overline{s}$.  In view of this, let us fix $s \in [1,\overline{s})$.

Our next goal is to define $K$ as in the proof of Proposition \ref{generalized-graph-lemma}; see the four bullet points in that proof.  There, we chose an arbitrary H\"older continuity exponent $\alpha$ satisfying \eqref{Holder} and an arbitrary integer $\mathtt n$ such that $\mathtt n \alpha > \mathtt m$.  Here, we want to choose $\alpha$ and $\mathtt n$ more carefully, so that $\mathtt n \leq \mathtt N$.  (This ensures that \eqref{z_1 upper bound} can be applied later.) In fact, $\mathtt n = \mathtt N$ will work, with a suitable $\alpha$.  To see this, we consider two cases:  $s \leq d-1$ and $s > d-1$.  Suppose we are in the first case.  Then the minimum in \eqref{Holder} is equal to $1/s$.  Since $s < \overline{s} \leq \mathtt N / \mathtt m$, there exists an $\alpha \in (0,1/s)$ such that $\mathtt N \alpha > \mathtt m$.  If we are in the second case, then the minimum in \eqref{Holder} is $\frac{d-s}{d-1}$, and $s < \overline{s} \leq d-\frac{(d-1)\mathtt m}{\mathtt N}$ implies that $\mathtt N \alpha > \mathtt m$ for some $\alpha \in (0,\frac{d-s}{d-1})$.  Having fixed $\alpha$ and $\mathtt n = \mathtt N$, the definition of $K$ proceeds just as in the proof of Proposition \ref{generalized-graph-lemma}.  This yields $\dim_{\mathrm H} K = s$ and utilizes, in particular, the function $F_s$ from Proposition \ref{high dimensional graph}.

Finally, to complete the proof, we assume that there exists some $\gamma \in (\Gamma \setminus \{0\}) \cap (K-K)$ and seek a contradiction.  This can be done as in the proof of Proposition \ref{generalized-graph-lemma} without modification, using in particular property \eqref{z bar lower bound}, the $\alpha$-H\"older continuity of $F_s$, and property \eqref{z_1 upper bound} with $n = \mathtt N$.
\end{proof}

\subsection{Anisotropic boxes and Hausdorff dimension}
\subsubsection{Proof of Lemma \ref{positivity}}
For $t \geq 0$, let $\mathscr{H}^t$ denote the standard $t$-dimensional Hausdorff measure on $\R^d$, defined by
\begin{align*}
\mathscr{H}^t(A) \colonequals \lim_{\delta \searrow 0}\mathscr{H}_\delta^t(A),
\end{align*}
where
\begin{align*}
\mathscr{H}_\delta^t(A) \colonequals \inf\Big\{\sum_{U \in \mathscr{U}} \diam(U)^t \colon \mathscr{U} \text{ is a countable cover of $A$, } \sup_{U \in \mathscr U} \diam U \leq \delta\Big\}.
\end{align*}
For any set $A \subset \R^d$, we have (by definition)
\begin{align}\label{Hausdorff dim def}
\dim_{\mathrm H} A = \inf\{t \colon \mathscr{H}^t(A) = 0\} = \sup\{t \colon \mathscr{H}^t(A) = \infty\};
\end{align}
see \cite[\S 2.2]{Mattila2}. Now, fix $E \subseteq \R^d$ and $s \in [0, \mathtt S - (d-\dim_{\mathrm H} E)\mathtt N)$, as in the lemma statement.  The definitions of $\mathtt S$ and $\mathtt N$ in \eqref{definition S, N} imply that $\mathtt S \leq d\mathtt N$.  It follows that
\begin{align*}
s = \mathtt S - (d-\alpha)\mathtt N
\end{align*}
for some $\alpha \in [0, \dim_{\mathrm H} E)$.  By \eqref{Hausdorff dim def}, we have $\mathscr{H}^\alpha(E) = \infty$, and thus there exists some $\delta \in (0,1]$ such that $\mathscr{H}_{\delta}^\alpha(E) > 0$.  Let $\fQ \subseteq \fD^*$ be an arbitrary cover of $E$.  If there exists some $Q_0 \in \fQ$ such that $\ell(Q_0)^{\mathtt N} \geq \delta/\sqrt{d}$, then
\begin{align*}
\sum_{Q \in \fQ}\ell(Q)^s \geq \ell(Q_0)^s \geq \Big(\frac{\delta}{\sqrt{d}}\Big)^{s/\mathtt N}.
\end{align*}
Suppose instead that every $Q \in \fQ$ obeys $\ell(Q)^{\mathtt N} \leq \delta/\sqrt{d}$.  Each $Q$  admits a covering $\mathscr{C}(Q)$ by exactly $\ell(Q)^{\mathtt S - d\mathtt N}$ \emph{cubes} of side-length $\ell(Q)^{\mathtt N}$.  In particular, the collection
\begin{align*}
\mathscr{C}(\fQ) \colonequals \bigcup_{Q \in \fQ}\mathscr{C}(Q)
\end{align*}
is a countable cover of $E$ consisting of sets of diameter at most $\delta$.  It follows that
\begin{align*}
\sum_{Q \in \fQ}\ell(Q)^s = \sum_{Q \in \fQ} \ell(Q)^{\mathtt S-d\mathtt N}\ell(Q)^{\alpha\mathtt N} = \sum_{C \in \mathscr{C}(\fQ)} \diam(C)^\alpha \geq \mathscr{H}_\delta^\alpha(E).
\end{align*}
Since $\fQ$ was arbitrary, we may conclude that
\begin{align*}
\fH_{\fD^*}^s(E) \geq \min\Big\{\Big(\frac{\delta}{\sqrt{d}}\Big)^{s/\mathtt N}, \mathscr{H}_\delta^\alpha(E)\Big\} > 0,
\end{align*}
completing the proof. \qed

\subsubsection{Proof of Lemma \ref{reversed inequality}}
Fix a set $E \subseteq \R^d$ such that $E$ is contained in an element of $\fD_{J}^*$ for some $J$.  Let $Q_E$ be the unique element of $\fD_J$ such that $E \subseteq Q_E$, and let $\fQ \subseteq \fD^*$ be an arbitrary cover of $E$.  On one hand, if $\fQ \not\subseteq \fD_J^*$, then there exists some $Q_0 \in \fQ$ such that $\ell(Q_0) \geq 2^{-J}$, and
\begin{align*}
\sum_{Q \in \fQ} \ell(Q)^s \geq \ell(Q_0)^s \geq 2^{-Js} = \ell(Q_E)^s \geq \fH_{\fD_J^*}^s(E).
\end{align*}
On the other hand, if $\fQ \subseteq \fD_{J}^*$, then
\begin{align*}
\sum_{Q \in \fQ} \ell(Q)^s \geq \fH_{\fD_J^*}^s(E)
\end{align*}
by definition.  Since $\fQ$ was arbitrary, we may conclude that $\fH_{\fD^*}^s(E) \geq \fH_{\fD_J^*}(E)$ and hence $\fH_{\fD^*}^s(E) = \fH_{\fD_J^*}(E)$ \qed

\subsubsection{Proof of Lemma \ref{alternate Frostman}}
Our argument follows the proof of the standard version of Frostman's lemma,  \cite[Theorem 8.8]{Mattila}, with minor adjustments. There, the idea is to create a sequence of measures $\{\vartheta^j\}_j$, with $\vartheta^j$ obeying the required ball condition on balls of diameter at least $2^{-j}$, and then take a weak limit. The main distinction here is that the dyadic cubes used in Frostman's proof  are replaced by the anisotropic boxes in $\mathcal D^*$.

Fix a compact set $E \subset \R^d$ and $s \geq 0$.  By translation, we may assume that $E$ is contained in some element of $\fD_{J}$ for some $J$.  For each $j \geq J$, define a measure $\vartheta_j^j$ on $\R^d$ by specifying that
\begin{align*}
\vartheta_j^j|_Q = \begin{cases} 0 &\text{if } E \cap Q = \emptyset\\
\frac{\ell(Q)^s}{\lambda(Q)}\lambda|_Q &\text{if } E \cap Q \neq \emptyset
\end{cases} \quad\quad \text{for each } Q \in \fD_j,
\end{align*}
where $\lambda$ denotes $d$-dimensional Lebesgue measure.  Suppose that measures $\vartheta_j^j, \vartheta_{j-1}^j, \ldots, \vartheta_{j-k}^j$ on $\R^d$ have been constructed, with $j - k > J$.  Define the measure $\vartheta_{j-k-1}^j$ by specifying that
\begin{align*}
\vartheta_{j-k-1}^j|_Q = \begin{cases} \vartheta_{j-k}^j|_Q &\text{if } \vartheta_{j-k}^j(Q) \leq \ell(Q)^s\\
\frac{\ell(Q)^s}{\vartheta_{j-k}^j(Q)}\vartheta_{j-k}^j|_Q &\text{if } \vartheta_{j-k}^j(Q) > \ell(Q)^s
\end{cases} \quad\quad \text{for each } Q \in \fD_{j-k-1}.
\end{align*}
Let
\begin{align*}
j^* \colonequals \max\{j' \leq j \colon E \subset Q \text{ for some } Q \in \fD_{j'}\},
\end{align*}
and define $\vartheta^j \colonequals \vartheta_{j^*}^j$ (noting that $J \leq j^* \leq j$). Let $Q_{j^*} \in \fD_{j^*}$ be such that $E \subset Q_{j^*}$.  The following are consequences of the construction of $\vartheta^j$:
\begin{enumerate}[1.]
\item{$\supp \vartheta^j \subseteq \bigcup\{Q \in \fD_j \colon E \cap Q \neq \emptyset\} \subseteq Q_{j^*}$;}
\item{$\vartheta^j(Q) \leq \ell(Q)^s$ for all $Q \in \fD_{j'}$ with $J \leq j' \leq j$;}
\item{Each point in $E$ belongs to some $Q \in \fD_{j'}$ with $J \leq j' \leq j$ such that $\vartheta^j(Q) = \ell(Q)^s$.}
\end{enumerate}
Statements 1 and 2 imply that
\begin{align}\label{total mass}
\sup_{j \geq J}\|\vartheta^j\| \leq \sup_{j \geq J}\ell(Q_{j^*})^s \leq 2^{-Js},
\end{align}
and consequently $\{\vartheta^j\}_{j \geq J}$ has a weakly convergent subsequence.  Let $\vartheta$ be the weak limit along this subsequence.

We will show that this measure satisfies the conclusions of the lemma.  Statement 1 and the hypothesis that $E$ is compact imply that $\vartheta$ is supported in $E$.  Fix $Q \in \fD^*$.  If $\ell(Q) \geq 2^{-J}$, then \eqref{total mass} implies that $\vartheta(Q) \leq \|\vartheta\| \leq 2^{-Js} \leq \ell(Q)^s$, while if $\ell(Q) \leq 2^{-J}$, then statement 2 implies that $\vartheta(Q) \leq \ell(Q)^s$.  This confirms the ``ball" condition for $\vartheta$.  It remains to show that the total mass of $\vartheta$ is at least $\fH_{\fD^*}^s(E)$.  Toward this end, fix $j \geq J$. Using statement 3, we select for each $x \in E$ the largest $Q \in \fD^*$ such that $x \in Q$ and $\vartheta^j(Q) = \ell(Q)^s$.  Let $\fQ_j$ denote the collection of these elements.  Then $\fQ_j$ covers both $E$ and $\supp \vartheta^j$, and distinct elements of $\fQ_j$ are disjoint.  It follows that
\begin{align*}
\|\vartheta^j\| = \sum_{Q \in \fQ_j}\vartheta^j(Q) = \sum_{Q \in \fQ_j}\ell(Q)^s \geq \fH_{\fD^*}^s(E).
\end{align*}
Since $j$ was arbitrary, we may conclude that $\|\vartheta\| \geq \fH_{\fD^*}^s(E)$. \qed

\subsubsection{Proof of Lemma \ref{energy bound}}
Fix $L$, $\sigma$, and $s$ as in the lemma statement, and let $\vartheta$ be any Borel measure $\vartheta$ supported on $[0,1]^d$ such that
\begin{align}\label{measure properties}
\|\vartheta\| \leq 1 \quad\quad \text{and} \quad\quad \sup_{Q \in \fD^*} \frac{\vartheta(Q)}{\ell(Q)^s} \leq L.
\end{align}
Let $\Omega \colonequals [0,2)^d$, and let $\Delta$ denote the diagonal of $\Omega \times \Omega$, namely $\Delta \colonequals \{(x,y) \in \Omega \times \Omega \colon x = y\}$.
Thus, the $\sigma$-dimensional energy of of $\vartheta$ can be written as
\begin{align*}
I_\sigma(\vartheta) = \iint_{\Omega \times \Omega \setminus \Delta} |x-y|^{-\sigma} d\vartheta(x)d\vartheta(y).
\end{align*}
We form a Whitney decomposition of $\Omega \times \Omega \setminus \Delta$ as follows:  For each $j \geq -1$, let $\mathcal{C}_j = \mathcal{C}_j(\Omega)$ be the set of dyadic cubes with side-length $2^{-j}$ contained in $\Omega$; that is,
\begin{align*}
\mathcal{C}_j \colonequals \{x + [0,2^{-j})^d \colon x \in 2^{-j}\Z^d \cap \Omega\}.
\end{align*}
We say that two dyadic cubes are \emph{adjacent} if their closures have nonempty intersection.  For $j \geq 0$, each cube in $\fC_j$ is contained in a unique ``parent" cube in $\fC_{j-1}$. We say that $C,C' \in \fC_j$ are \emph{related}, and write $C \sim C'$, if $C$ and $C'$ are nonadjacent but have adjacent parents.  The following properties are easy to confirm:
\begin{enumerate}[1.]
\item{For each $(x,y) \in \Omega \times \Omega \setminus \Delta$, there exists a (unique) pair of related cubes $C,C'$ such that $(x,y) \in C \times C'$.}
\item{If $C,C' \in \fC_j$ and $C \sim C'$, then $|x-y| \geq 2^{-j}$ for all $(x,y) \in C \times C'$.}
\item{For a fixed cube $C$, there are at most $6^d$ cubes $C'$ such that $C \sim C'$.}
\end{enumerate}
It follows from property 1 that
\begin{align*}
\Omega \times \Omega \setminus \Delta = \bigcup_{j = 0}^\infty \bigcup_{\substack{(C,C') \in \fC_j \times \fC_j \colon\\ C \sim C'}} C \times C'.
\end{align*}
Thus, using properties 2 and 3, together with \eqref{measure properties} and our hypothesis on $s$, we obtain
\begin{align*}
I_\sigma(\vartheta) &\leq \sum_{j=0}^\infty \sum_{\substack{(C,C') \in \fC_j \times \fC_j \colon\\ C \sim C'}} \iint_{C \times C'} |x-y|^{-\sigma} d\vartheta(x)d\vartheta(y)\\
&\leq \sum_{j=0}^\infty \sum_{C \in \fC_j} \sum_{C' \in \fC_j \colon C \sim C'} 2^{j\sigma}\vartheta(C)\vartheta(C')\\
&\leq \sum_{j=0}^\infty 2^{j\sigma} \sum_{C \in \fC_j}\vartheta(C) \#\{C' \in \fC_j \colon C \sim C'\} \max_{C' \in \fC_j}\vartheta(C')\\
&\leq 6^d\sum_{j=0}^\infty 2^{j\sigma}\Big(\sum_{C \in \fC_j}\vartheta(C)\Big)\max_{C' \in \fC_j}\#\{Q \in \fD_j \colon Q \cap C' \neq \emptyset\}L2^{-js}\\
&= 6^dL\sum_{j=0}^\infty 2^{j\sigma}\|\vartheta\| 2^{j(\mathtt S-d)}2^{-js} \leq 6^dL\sum_{j=0}^\infty 2^{j(\sigma+\mathtt S - d-s)} = \frac{6^dL}{1-2^{\sigma + \mathtt S - d - s}}.
\end{align*}
Since $\vartheta$ was arbitrary, we have shown that the lemma holds with
\begin{align*}
\mathtt E(t) \colonequals \frac{6^d}{1-2^{-t}},
\end{align*}
and the proof is complete.  \qed

\Addresses 
}

\end{document}